\documentclass[dvipsnames, a4paper, 12pt]{article}
\usepackage[utf8]{inputenc}
\usepackage{placeins}

\usepackage{hyperref}
\usepackage[utf8]{inputenc}
\usepackage[T1]{fontenc}
\usepackage{amsmath}
\usepackage{amsthm}
\usepackage{amsfonts}
\usepackage{amssymb}
\usepackage{graphicx,caption,subcaption}
\usepackage{fancybox}
\usepackage{enumitem}
\usepackage{soul}
\usepackage{mathtools}
\usepackage{lmodern}
\usepackage{stmaryrd}
\usepackage{multirow}
\usepackage{multicol}
\usepackage{xcolor}
\usepackage{tabularx}
\usepackage{array}
\usepackage{colortbl}
\usepackage{varwidth}
\PassOptionsToPackage{dvipsnames,svgnames}{xcolor}     
\usepackage{wrapfig}
\usepackage[explicit,calcwidth]{titlesec}
\usepackage{fancyhdr}
\usepackage[left=2.1cm, right=2.1cm, top=2cm, bottom=2cm]{geometry}
\usepackage{esint}
\usepackage{tocloft}
\usepackage[skip=8pt plus1pt, indent=0pt]{parskip}
\usepackage{twoopt}
\usepackage{authblk}
\usepackage{accents}
\usepackage{mathrsfs}
\usepackage{adjustbox} %
\hypersetup{ colorlinks=true, %
    linktoc=all,     %
    linkcolor=blue,  %
} \PassOptionsToPackage{unicode}{hyperref}
\PassOptionsToPackage{naturalnames}{hyperref}
\setcounter{secnumdepth}{3}
\setcounter{tocdepth}{3}
\usepackage{float}

\pagestyle{fancy}
\setlength{\headheight}{15pt}
\fancyhead[L]{Barycentres for Generic Transport Costs}
\fancyhead[R]{Eloi Tanguy, Julie Delon and Nathaël Gozlan}

\usepackage[ruled,vlined,linesnumbered]{algorithm2e}
\setlength{\marginparwidth}{2cm}
\usepackage{todonotes}
\setuptodonotes{fancyline, color=red!20, shadow, inline}

\usepackage{silence}

\WarningFilter{minitoc(hints)}{W0023} \WarningFilter{minitoc(hints)}{W0028}
\WarningFilter{minitoc(hints)}{W0030} \WarningFilter{minitoc(hints)}{W0099}
\WarningFilter{minitoc(hints)}{W0024} \WarningFilter{minitoc(hints)}{W0030}
\WarningFilter{Package pgf}{Snakes} \WarningFilter{pdfTex}{}
\WarningFilter{tocloft.sty}{}

\usepackage[bbgreekl]{mathbbol}
\DeclareSymbolFont{bbold}{U}{bbold}{m}{n}
\DeclareSymbolFontAlphabet{\mathbbold}{bbold}
\DeclareSymbolFontAlphabet{\mathbb}{AMSb}%

\usepackage[nameinlink, capitalise]{cleveref}

\newtheorem{theorem}{Theorem}[section]
\newtheorem*{theorem*}{Theorem}

\newtheorem{lemma}[theorem]{Lemma}
\newtheorem*{lemma*}{Lemma}

\newtheorem{remark}[theorem]{Remark}
\newtheorem{prop}[theorem]{Proposition}
\newtheorem{assumption}{Assumption}
\newtheorem{definition}[theorem]{Definition}

\usepackage[
    backend=biber,
    uniquename=false,
    bibencoding=utf8,
    sorting=nyt,
    doi=false, isbn=false, url=false,
    style=alphabetic,
    maxcitenames=2,
    uniquelist=false,
    maxbibnames=123,
    backref=true,
    hyperref=true
]{biblatex}
\addbibresource{ecl.bib}
\usepackage{amsthm}
\usepackage[most]{tcolorbox}

\newtheoremstyle{upright}   %
  {10pt}                    %
  {10pt}                    %
  {}                        %
  {}                        %
  {\bfseries}               %
  {.}                       %
  { }                       %
  {}                        %

\theoremstyle{upright}

\tcbuselibrary{theorems}
\definecolor{theoremblue}{HTML}{2196F3} %
\definecolor{proofpurple}{HTML}{9013FE} %
\definecolor{defyellow}{HTML}{FFEB3B} %
\definecolor{assumptionred}{HTML}{ff3b3b} %
\definecolor{conventionred}{HTML}{ff3b3b} %
\definecolor{remarkorange}{HTML}{dc5800} %
\definecolor{examplegreen}{HTML}{00695c} %

\tcolorboxenvironment{theorem}{
  enhanced jigsaw,
  sharp corners,
  colframe=theoremblue,
  colback=theoremblue!5,
  coltitle=black,
  fonttitle=\bfseries,
  borderline west={2pt}{0pt}{theoremblue},
  before skip=10pt,
  after skip=10pt,
  boxrule=0pt,
  left=10pt,
  right=10pt,
  breakable
}

\tcolorboxenvironment{definition}{
  enhanced jigsaw,
  sharp corners,
  colframe=defyellow,
  colback=defyellow!5,
  coltitle=black,
  fonttitle=\bfseries,
  borderline west={2pt}{0pt}{defyellow},
  before skip=10pt,
  after skip=10pt,
  boxrule=0pt,
  left=10pt,
  right=10pt,
  breakable
}

\tcolorboxenvironment{corollary}{
  enhanced jigsaw,
  sharp corners,
  colframe=theoremblue,
  colback=theoremblue!5,
  coltitle=black,
  fonttitle=\bfseries,
  borderline west={2pt}{0pt}{theoremblue},
  before skip=10pt,
  after skip=10pt,
  boxrule=0pt,
  left=10pt,
  right=10pt,
  breakable
}

\tcolorboxenvironment{prop}{
  enhanced jigsaw,
  sharp corners,
  colframe=theoremblue,
  colback=theoremblue!5,
  coltitle=black,
  fonttitle=\bfseries,
  borderline west={2pt}{0pt}{theoremblue},
  before skip=10pt,
  after skip=10pt,
  boxrule=0pt,
  left=10pt,
  right=10pt,
  breakable
}

\tcolorboxenvironment{condition}{
  enhanced jigsaw,
  sharp corners,
  colframe=theoremblue,
  colback=theoremblue!5,
  coltitle=black,
  fonttitle=\bfseries,
  borderline west={2pt}{0pt}{theoremblue},
  before skip=10pt,
  after skip=10pt,
  boxrule=0pt,
  left=10pt,
  right=10pt,
  breakable
}

\tcolorboxenvironment{example}{
  enhanced jigsaw,
  sharp corners,
  colframe=examplegreen,
  colback=examplegreen!5,
  coltitle=black,
  fonttitle=\bfseries,
  borderline west={2pt}{0pt}{examplegreen},
  before skip=10pt,
  after skip=10pt,
  boxrule=0pt,
  left=10pt,
  right=10pt,
  breakable
}

\tcolorboxenvironment{lemma}{
  enhanced jigsaw,
  sharp corners,
  colframe=theoremblue,
  colback=theoremblue!5,
  coltitle=black,
  fonttitle=\bfseries,
  borderline west={2pt}{0pt}{theoremblue},
  before skip=10pt,
  after skip=10pt,
  boxrule=0pt,
  left=10pt,
  right=10pt,
  breakable
}

\tcolorboxenvironment{assumption}{
  enhanced jigsaw,
  sharp corners,
  colframe=assumptionred,
  colback=assumptionred!5,
  coltitle=black,
  fonttitle=\bfseries,
  borderline west={2pt}{0pt}{assumptionred},
  before skip=10pt,
  after skip=10pt,
  boxrule=0pt,
  left=10pt,
  right=10pt,
  breakable
}

\tcolorboxenvironment{remark}{
  enhanced jigsaw,
  sharp corners,
  colframe=remarkorange,
  colback=remarkorange!5,
  coltitle=black,
  fonttitle=\bfseries,
  borderline west={2pt}{0pt}{remarkorange},
  before skip=10pt,
  after skip=10pt,
  boxrule=0pt,
  left=10pt,
  right=10pt,
  breakable
}

\tcolorboxenvironment{convention}{
  enhanced jigsaw,
  sharp corners,
  colframe=conventionred,
  colback=conventionred!5,
  coltitle=black,
  fonttitle=\bfseries,
  borderline west={2pt}{0pt}{conventionred},
  before skip=10pt,
  after skip=10pt,
  boxrule=0pt,
  left=10pt,
  right=10pt,
  breakable
}

\newtcolorbox{myproof}{
  enhanced jigsaw,
  sharp corners,
  colframe=proofpurple,
  colback=proofpurple!0,
  borderline west={1pt}{0pt}{proofpurple},
  borderline south={1pt}{0pt}{proofpurple},
  before skip=10pt,
  after skip=10pt,
  boxrule=0pt,
  left=10pt,
  right=10pt,
  breakable,
}

\renewenvironment{proof}[1][\proofname]{
  \begin{myproof}
    \textbf{#1.}
}{\hfill$\qed$\end{myproof}
}
\newcommand{\N}[0]{\mathbb{N}}

\newcommand{\R}[0]{\mathbb{R}}

\newcommand{\K}[0]{\mathcal{K}}

\newcommand{\dd}[0]{\mathrm{d}}

\newcommand{\Tr}[0]{\mathrm{Tr}}

\DeclareMathOperator{\argmin}{argmin}

\DeclareMathOperator{\supp}{supp}
\newcommand{\W}[0]{\mathrm{W}}

\newcommand{\app}[4]{\left\lbrace\begin{array}{ccc}
   #1 & \longrightarrow & #2 \\
   #3 & \longmapsto & #4 \\
\end{array} \right.}

\newcommand{\oll}[1]{\overline{#1}}

\newcommand{\verteq}{\rotatebox{90}{$\,=$}}

\newcommand{\step}[2]{\textrm{---} \textit{Step #1}: #2}

\newcommand{\X}{\mathcal{X}}
\newcommand{\Y}{\mathcal{Y}}
\newcommand{\Z}{\mathcal{Z}}

\DeclareMathOperator{\Extr}{Extr}
\newcommand{\Leb}{\mathscr{L}}
\newcommand{\T}{\mathcal{T}}
\newcommand{\bures}{d_{\mathrm{BW}}^2}
\newcommand{\PD}[1][d]{S_{#1}^{++}(\R)}

\DeclareMathOperator{\KL}{KL}
\DeclareMathOperator{\NWC}{NWC}

\definecolor{mybluei}{HTML}{ABE6FF}

\newcommand{\blue}[1]{{\color{BlueViolet}#1}}

\title{Computing Barycentres of Measures for Generic Transport Costs}
\author[1]{Eloi Tanguy}
\author[1]{Julie Delon}
\author[1]{Nathaël Gozlan}
\affil[1]{Universit\'e Paris Cit\'e, CNRS, MAP5, F-75006 Paris, France}
\date{16th July 2026}

\renewcommand{\blue}[1]{#1}

\begin{document}
\maketitle

\begin{abstract}
    Wasserstein barycentres represent average distributions between multiple probability measures for the Wasserstein distance. 
The numerical computation of Wasserstein barycentres is notoriously challenging. A common approach is to use Sinkhorn iterations, where an entropic regularisation term is introduced to make the problem more manageable. Another approach involves using fixed-point methods, akin to those employed for computing Fréchet means on manifolds. The convergence of such methods for 2-Wasserstein barycentres, specifically with a quadratic cost function and absolutely continuous measures, was studied by Alvarez-Esteban et al. in~\cite{alvarez2016fixed}. In this paper, we delve into the main ideas behind this fixed-point method and explore how it can be generalised to accommodate more diverse transport costs and generic probability measures, thereby extending its applicability to a broader range of problems. We show convergence results for this approach and illustrate its numerical behaviour on several barycentre problems.
\end{abstract}

\tableofcontents

\section{Introduction}
\label{sec:intro}

\subsection{Related Works and Motivation}

Wasserstein barycentres represent a powerful concept in Optimal Transport
theory, enabling the computation of average distributions between multiple
probability measures. These barycentres preserve the geometric structure of the
underlying distributions, making them particularly suited for machine learning
tasks. They have proven useful in numerous applications, including image
processing~\cite{rabin2012wasserstein}, computer
graphics~\cite{solomon2015convolutional,bonneel2016wasserstein},
statistics~\cite{bigot2019penalization},  domain
adaptation~\cite{montesuma2021wasserstein}, generative
modelling~\cite{korotin2022wasserstein}, fairness in machine
learning~\cite{gordaliza2019obtaining} or model selection in Bayesian
learning~\cite{Backhoff-Veraguas:2018aa}. Wasserstein barycentres are also at
the core of clustering methods such as K-means, to define centroids in spaces of
probability measures~\cite{ho2017multilevel,mi2018variational}.

The classical notion of barycentre refers to the weighted average of a set of
points $(x_k)$ with positive weights $(\lambda_k)$ summing to $1$, in a metric
space $(E,d)$. Formally, a barycentre $\bar{x}$ is a point that minimises the
weighted sum of (typically squared) distances:
$$\bar{x} \in \underset{x\in E}{\argmin} \sum_{k=1}^K \lambda_k d^2(x, x_k).$$
This concept can be extended to the space of probability measures, where $d$ can
be replaced for instance by a transportation cost $\mathcal{T}_c$. We remind
that for two probability measures $\mu$ and $\nu$ on metric spaces $(\X,d_\X)$
and $(\Y,d_\Y)$, and a cost function $c:\X\times \Y \rightarrow \R_+$, the
optimal transport cost between $\mu$ and $\nu$ for the ground cost $c$ is
defined as
$$\T_c(\mu, \nu) = \underset{\pi\in\Pi(\mu, \nu)}{\inf}\int_{\X\times \Y} c \dd
\pi,$$ where $\Pi(\mu, \nu)$ is the set of probability measures on $\X\times
\Y $ with marginals $\mu$ and $\nu$. Considering $K$ different cost
functions $c_k$, the barycentre problem can be written in this setting as
\begin{equation}
    \bar{\mu} \in \underset{\mu}{\argmin} \sum_{k=1}^K \lambda_k \T_{c_k}(\mu, \nu_k). \label{eq:bary_intro}
\end{equation}
 When $(\X,d_\X) = (\Y,d_\Y)$ is a Polish space and  $c = d_\X^p$ with $p \ge
1$, $\W_p(\mu,\nu):=\left(\T_{d^p}(\mu, \nu)\right)^{\frac 1 p}$ defines a
distance between probability measures (with finite moment of order $p$), called
$p$-Wasserstein distance. In this case, the barycentre $\bar{\mu}$ defined
above is called a Wasserstein barycentre. Generalisation to a barycentre of a
probability measure on $\mathcal{P}(\X)$ and the consistency of their discrete
approximations is also studied by several authors~\cite{agueh2017vers}.

The theoretical analysis of Wasserstein barycentres begins with the foundational
work by Carlier and Ekeland~\cite{carlier2010matching}, who studied the
existence, uniqueness and dual formulations for barycentre problems with generic
continuous cost functions. Subsequent work by~\cite{agueh2011barycenter}
re-established the existence and dual formulations  of such barycentres for the
quadratic Wasserstein distance $\W_2$ on Euclidean spaces, and showed uniqueness
under the hypothesis that one of the original measures is absolutely continuous.
More recent studies have broadened these results:
~\cite{Carlier2023Wassersteinmedians} extended the theoretical analysis to
Wasserstein medians ($\W_1$), studying their stability properties, and
investigated dual and multi-marginal formulations. \cite{brizzi2024p} further
extended the framework to $\W_p$ distances for $p > 1$, proving existence and
uniqueness of barycentres for absolutely continuous measures on $\mathbb{R}^d$.
A follow-up study by~\cite{brizzi2024hwassersteinbarycenters} analysed the
general case for strictly convex and $\mathcal{C}^2$ cost functions with
non-degenerate Hessian.

From a computational perspective, calculating Wasserstein barycentres is known
to be a highly challenging problem, classified as NP-hard. According
to~\cite{altschuler2021wasserstein}, although polynomial-time algorithms exist
for computing Wasserstein barycentres with a fixed number of points, their
computational complexity scales exponentially with respect to the dimension of
the space, or with respect to the number of marginals. This makes direct
computation infeasible for high-dimensional problems or large sets of
distributions, which are common in practical applications.

To tackle these computational challenges, several approximate methods have been
developed for Wasserstein barycentres. The first paper to propose an algorithmic
solution for computing these barycentres was~\cite{rabin2012wasserstein}, which
computed Sliced Wasserstein barycentres through a gradient descent approach.
This method leveraged the sliced Wasserstein distance to achieve an efficient
approximation, significantly simplifying  computations.

A natural approach to develop easily computable approximations of such
barycentres is to replace  transport costs $\T_c$ by regularised versions 
$$\T_{c,\varepsilon}(\mu, \nu) = \underset{\pi\in\Pi(\mu, \nu)}{\inf}
\int_{\X\times \Y} c \dd \pi +\varepsilon \KL(\pi|\mu \otimes \nu),$$ as
proposed in~\cite{cuturi14fast}. When the support of the distributions and
barycentre is fixed (a grid for instance), the problem can be rewritten as a KL
projection problem and the so-called entropic barycentre can be computed
efficiently with a modified version of Sinkhorn's
algorithm~\cite{benamou2015iterative,computational_ot}. 

In order to deal with distributions without imposed support a second approach
also described in~\cite{cuturi14fast} relies on a fixed-point algorithm inspired
by the computation of Fréchet means on manifolds. Each step of this fixed point
approach consists in replacing the current barycentre $\mu$ by its image measure
by the map $\sum_{k=1}^K \lambda_k T_k$, where the $T_k$ are optimal maps
between $\mu$ and $\nu_k$ (assuming these maps exist). The authors
of~\cite{alvarez2016fixed} were the first to establish a rigorous proof of
convergence for this fixed-point approach in the case of absolutely continuous
measures $\nu_k$: more precisely, they proved convergence of a subsequence to a
fixed point and showed that if the fixed point is unique, it is indeed a
barycentre. Their study focuses specifically on the case of $\W_2$ barycentres,
with applications demonstrated mainly on Gaussian measures. Although their proof
is only provided for absolutely continuous measures, this fixed point approach
is frequently used for discrete measures and probably the baseline free-support
method provided in numerical optimal transport libraries~\cite{flamary2021pot}.
Building on the same ideas as~\cite{alvarez2016fixed}, the author
of~\cite{lindheim2023simple} extends the investigation of the fixed point
algorithm for discrete measures on $\R^d$,  limited to just one single
iteration, and deriving a worst-case error bound in the $\W_2$ and $\W_1$
settings. The iterative solver of ~\cite{alvarez2016fixed} has also been
extended in high dimensional settings by~\cite{korotin2022wasserstein}, which
use a neural solver for computing the optimal maps $T_k$. 

In closely related directions, several other approaches have been proposed to
compute Wasserstein barycentres over Riemannian manifolds~\cite{Kim-Pass17}, or
Gromov-Wasserstein barycentres~\cite{beier2025tangential,beier2023multi} and the
approach we develop in this paper share similarities
with~\cite{beier2025tangential}.

\subsection{Contributions and Outline}

In this paper, we develop a fixed-point approach to compute barycentres between
probability measures for generic transport costs, i.e. solutions of the
optimisation problem~\eqref{eq:bary_intro}. Our only hypotheses are that we work
on compact spaces, and that the ground costs $c_k$ are continuous and such that
$\argmin_x \sum_{k=1}^K \lambda_k c_k(x,x_k)$ is uniquely defined. In
particular,  we do not assume existence of optimal  transport maps between $\mu$
and the $\nu_k$, and we do not assume anything on the probability measures $\mu$
and $\nu_k$. We propose an iterative fixed-point algorithm
generalising~\cite{alvarez2016fixed} in this generic case. We show that the
sequences generated by this algorithm have converging sub-sequences, that limits
must be fixed-points of a certain mapping $G$, and that a barycentre
for~\eqref{eq:bary_intro} is also a fixed point of $G$. 

Numerically, we show that our approach specifically allows to extend the recent
definition of generalised Wasserstein barycentres presented
in~\cite{delon2021generalized}, notably by considering non-linear functions
between the ambient space and the subspaces of measures $\nu_k$. It also enables
efficient computation of barycentres for the mixture Wasserstein
metric~\cite{delon2020wasserstein}, which until now were calculated using their
multi-marginal equivalent formulation. \blue{Our methods provide new techniques
for computing barycentres that are robust to outlier measures (as investigated
on simple examples in \cref{sec:ex_bar_pq,sec:ex_colour_transfer}), which
provides alternatives to the proposals from
\cite{Carlier2023Wassersteinmedians,bartl2025robust}.}

The paper is organised as follows. In \cref{sec:bar_props}, we introduce a novel
notion of Optimal Transport barycentres in a certain space between measures
$\nu_k$ on potentially different spaces for generic costs $c_k$. In
\cref{sec:fp_algo}, we propose a fixed-point algorithm which generalises
\cite{alvarez2016fixed} and converges to solutions (in a certain sense). We
re-write the problem in a discrete setting in \cref{sec:discrete} and illustrate
our method in \cref{sec:numerics} on several numerical examples,
\href{https://github.com/eloitanguy/ot\_bar}{providing a publicly available
Python toolkit}.

\section{Lifting Ground Barycentres to Measures}\label{sec:bar_props}

We work with probability measures $\nu_k$ on compact metric spaces $(\Y_k,
d_{\Y_k})_{k\in \llbracket 1, K \rrbracket}$,\footnote{where for $a,b\in \N,\;
\llbracket a, b \rrbracket$ denotes the set $\{a, a+1, \cdots, b\}.$} of which
we will seek a ``barycentre'' $\mu$ in a compact metric space $(\X, d_\X)$. To
compare a measure $\nu_k \in \mathcal{P}(\Y_k)$ and $\mu \in \mathcal{P}(\X)$ we
consider continuous cost functions $c_k: \X\times\Y_k \longrightarrow \R_+$. A
barycentre will be a minimiser of the sum of the transport costs with respect to
the measure $\nu_k$, leading to the following energy for a measure $\mu \in
\mathcal{P}(\X)$:
\begin{equation}\label{eqn:def_V}
    V(\mu) := \sum_{k=1}^K\T_{c_k}(\mu, \nu_k),
\end{equation}
hence our minimisation problem reads
\begin{equation}\label{eqn:def_bar}
    \underset{\mu\in \mathcal{P}(\X)}{\argmin}\ V(\mu).
\end{equation}
Note that to introduce barycentre weights $\lambda_k$, it suffices to replace
$c_k$ with $\lambda_k c_k$, which allows us to include weights in the costs and
alleviate notation. We summarise our standing assumptions on the spaces and
costs in \cref{ass:spaces_costs}:

\begin{assumption}\label{ass:spaces_costs}
    The metric spaces $(\X, d_\X)$ and $(\Y_k, d_{\Y_k})$ are compact, and the
    costs $c_k: \X\times\Y_k \longrightarrow \R_+$ are continuous.
\end{assumption}

Existence of solutions for Problem \eqref{eqn:def_bar} was established by
\cite[Proposition 2]{carlier2010matching} under \cref{ass:spaces_costs}.

\begin{remark}\label{rem:uniqueness_matching_teams} Uniqueness was proven in
    \cite[Proposition 4]{carlier2010matching} if, essentially, for at least one
    $k$, the problem $\T_{c_k}(\mu, \nu_k)$ has a Monge solution, for which they
    assume that each $\nu_k$ is absolutely continuous on  $\Y_k = \oll{\Omega}$
    with $\Omega$ an open and bounded subset of $\R^d$ with with $\nu_k(\partial
    \Omega) = 0$. They also assume that the costs $c_k(\cdot, y)$ are Lipschitz
    with a uniform constant $L$ and that $c_k$ verifies the Twist condition:
    $c_k(\cdot, y)$ is differentiable, with $\partial_xc_k(x, \cdot)$ injective.
\end{remark}

The definition of a barycentre between measures $\nu_k$ can be seen as a lifting
of a notion of barycentre within $\X$ of points $(y_1, \cdots, y_K) \in \Y_1
\times \cdots \times \Y_K$. To give mathematical meaning to this intuition and
to our method, we will make the following assumption throughout the paper:
\begin{assumption}\label{ass:B}
    For all $(y_1, \cdots, y_K)\in \Y_1 \times \cdots \times \Y_K,$ the set
    $\underset{x\in \X}{\argmin}\ \sum_{k=1}^Kc_k(x, y_k)$ has a unique element.
\end{assumption}
The uniqueness of the optimisation problem in \cref{ass:B} allows us to
introduce the ground barycentre function $B$:
\begin{equation}\label{eqn:def_B}
    B: \app{\Y_1 \times \cdots \times \Y_K}{\X}{(y_1, \cdots, y_K)}{\underset{x\in \X}{\argmin}\ \sum_{k=1}^Kc_k(x, y_k).}
\end{equation}
For example, in the case $\X = \Y_1 = \cdots = \Y_K = \R^d$, with $c_k(x,
y_k) = \|x - y_k\|_2^2$, the ground barycentre function $B$ is the standard
Euclidean barycentre: $B(y_1, \cdots, y_K) = \frac{1}{K}\sum_{k=1}^Ky_k$. For
convenience, we introduce $\Y := \Pi_k\Y_k,$ equipped with the product distance,
with the notation $Y := (y_1, \cdots, y_K)$ for an element of $\Y$, as well as
the total cost function:
\begin{equation}\label{eqn:def_C}
    C:= \app{\X \times \Y}{\R_+}{(x, y_1, \cdots, y_K)}{\sum_{k=1}^Kc_k(x, y_k)}.
\end{equation}
Equipped with these convenient notations, we can write the multi-marginal
formulation of our barycentre problem:
\begin{equation}\label{eqn:multi_marginal}
    \underset{\pi \in \Pi(\nu_1, \cdots, \nu_K)}{\argmin} \int_{\Y}C(B(Y), Y)\dd\pi(Y).
\end{equation}
The barycentre problem defined in \cref{eqn:def_bar} is related to the
multi-marginal formulation through the following equation, due to
\cite[Proposition 3.3]{carlier2010matching}:
\begin{equation}\label{eqn:multi_marginal_bar_equivalence}
    \underset{\mu\in \mathcal{P}(\X)}{\argmin}\ V(\mu) = B \# \underset{\pi \in \Pi(\nu_1, \cdots, \nu_K)}{\argmin} \int_{\Y}C(B(Y), Y)\dd\pi(Y),
\end{equation}
where $\#$ denotes the push-forward operator: $f\#\mu :=
\mathrm{Law}_{X\sim\mu}[f(X)]$. The following technical result uses the
continuity of the $c_k$ and \cref{ass:B} to show that $B$ is continuous.

\begin{lemma}\label{lemma:B_C0} The function $B: \Y \longrightarrow \X$ defined
    in \cref{eqn:def_B} is continuous.
\end{lemma}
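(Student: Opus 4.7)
The plan is a standard compactness-plus-uniqueness argument, essentially a special case of Berge's maximum theorem tailored to the setting.

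I would argue by sequential continuity. Fix a sequence $Y^{(n)} \to Y$ in $\Y$ and set $x_n := B(Y^{(n)})$. Since $\X$ is compact, it suffices to show that every convergent subsequence of $(x_n)$ has limit $B(Y)$; this forces the whole sequence to converge to $B(Y)$. So extract an arbitrary subsequence (still denoted $x_n$) with $x_n \to x^\star \in \X$.

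The key step is to identify $x^\star$ as the minimiser of $C(\cdot, Y)$. By definition of $B$, for every $x \in \X$ we have
\begin{equation*}
    C(x_n, Y^{(n)}) \le C(x, Y^{(n)}).
\end{equation*}
Here $C: \X \times \Y \to \R_+$ is continuous (as a finite sum of continuous functions $c_k$) on the compact product $\X \times \Y$. Passing to the limit $n \to \infty$ using $(x_n, Y^{(n)}) \to (x^\star, Y)$ and $(x, Y^{(n)}) \to (x, Y)$ yields
\begin{equation*}
    C(x^\star, Y) \le C(x, Y), \qquad \forall x \in \X.
\end{equation*}
Thus $x^\star$ lies in $\argmin_x C(x, Y)$, which by \cref{ass:B} is the singleton $\{B(Y)\}$, so $x^\star = B(Y)$.

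There is no real obstacle: the only thing to watch is that one really does need joint continuity of $C$ (not just continuity in $x$) to pass to the limit in the inequality above, and that one needs \cref{ass:B} to rule out other cluster points of $(x_n)$. The conclusion that the full sequence $B(Y^{(n)})$ converges to $B(Y)$ then follows from the standard fact that a sequence in a compact metric space with a unique accumulation point converges to it.
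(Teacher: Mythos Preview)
Your proof is correct and follows essentially the same approach as the paper: a standard compactness argument showing that for $Y_n \to Y$, the sequence $(B(Y_n))$ can only have $B(Y)$ as a subsequential limit. You have simply written out the details that the paper leaves implicit.
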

\begin{proof}
    The proof uses standard compactness arguments, showing that  for $Y_n
    \xrightarrow[n\longrightarrow+\infty]{}Y \in \Y$, $(B(Y_n))$ can only have $B(Y)$ as a subsequential limit.
\end{proof}

Another important technical result is the regularity of transport costs, which
we will use repeatedly. We gather well-known results in
\cref{lemma:Tc_regularity}.
\begin{lemma}\label{lemma:Tc_regularity} Consider $E, F$ compact metric spaces
    and let $c: E\times F \longrightarrow \R_+$ a measurable cost function. The
    optimal transport cost $\T_c$ has the following regularity for the weak
    convergence of measures depending on $c$:
    \begin{enumerate}
        \item If $c$ is lower-semi-continuous, then $\T_c$ is
        lower-semi-continuous.
        \item If $c$ is continuous, then $\T_c$ is continuous.
        \item If $E=F$ and $c$ is l.s.c. with $c(x,y) =0
        \Longrightarrow x= y$, then $\T_c(\mu, \nu) = 0 \Longrightarrow \mu =
        \nu$.
    \end{enumerate}
\end{lemma}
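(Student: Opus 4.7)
The plan is to treat the three items in sequence, exploiting compactness of $\mathcal{P}(E\times F)$ for the weak topology and existence of optimal plans whenever $c$ is lsc and bounded below. Given weakly convergent sequences $\mu_n\to \mu$ in $\mathcal{P}(E)$ and $\nu_n\to \nu$ in $\mathcal{P}(F)$, I will repeatedly use the fact that any sequence $(\pi_n)$ with marginals $(\mu_n,\nu_n)$ is tight (since $E\times F$ is compact), so admits a weakly convergent subsequence whose limit lies in $\Pi(\mu,\nu)$ by continuity of the marginal projection maps.

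For item 1, I would pick optimal plans $\pi_n\in\Pi(\mu_n,\nu_n)$, extract a subsequential weak limit $\pi\in\Pi(\mu,\nu)$, and invoke the Portmanteau-type inequality $\liminf_k \int c\,\dd\pi_{n_k} \ge \int c\,\dd\pi$ valid for any nonnegative lsc $c$. This yields $\liminf_n \T_c(\mu_n,\nu_n)\ge \T_c(\mu,\nu)$ along every subsequence, hence lower semi-continuity of $\T_c$. Item 3 is then immediate from the existence of an optimal plan $\pi\in\Pi(\mu,\nu)$: if $\T_c(\mu,\nu)=0$, then $\int c\,\dd\pi=0$ with $c\ge 0$ forces $\pi$ to be concentrated on $\{c=0\}\subset\{(x,y):x=y\}$, so both marginals of $\pi$ coincide and $\mu=\nu$.

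The main work is item 2, where continuity requires an upper semi-continuity bound on top of item 1. The plan is to fix an optimal plan $\pi\in\Pi(\mu,\nu)$ for the limit pair and build a sequence $\pi_n\in \Pi(\mu_n,\nu_n)$ with $\pi_n\to \pi$ weakly; boundedness and continuity of $c$ on the compact $E\times F$ then give $\int c\,\dd\pi_n\to \int c\,\dd\pi$ by definition of weak convergence applied to a bounded continuous test function, and therefore $\limsup_n \T_c(\mu_n,\nu_n)\le \T_c(\mu,\nu)$. Combined with item 1, this proves continuity. The main obstacle is the construction of the approximating plans $\pi_n$: I would use a standard gluing argument, coupling $\mu_n$ with $\mu$ and $\nu_n$ with $\nu$ (for instance optimally for some auxiliary continuous metric compatible with weak convergence on $\mathcal{P}(E)$ and $\mathcal{P}(F)$) and then composing these couplings with $\pi$ via disintegration to obtain a plan in $\Pi(\mu_n,\nu_n)$ whose weak limit can be checked to be $\pi$.
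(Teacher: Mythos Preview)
Your proof is correct and entirely standard, but it differs in method from what the paper does. The paper's proof is essentially a sequence of citations: for item~1 it invokes Kantorovich duality (Santambrogio, Theorem~1.42) to write $\T_c$ as a supremum of continuous linear functionals $\mu\otimes\nu\mapsto\int\varphi\,\dd\mu+\int\psi\,\dd\nu$, hence l.s.c.; for item~2 it simply cites Santambrogio, Theorem~1.51; and for item~3 it gives the same argument as you, testing the optimal plan against continuous functions. By contrast, you give direct self-contained arguments throughout: compactness of $\Pi(\mu_n,\nu_n)$ plus Portmanteau for item~1, and an explicit gluing construction of approximating plans $\pi_n\in\Pi(\mu_n,\nu_n)$ for the upper bound in item~2. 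Your approach has the advantage of not relying on duality (which is itself nontrivial for merely l.s.c.\ costs) and of making the proof of item~2 visible rather than a black-box citation; the paper's approach is shorter on the page and ties the result to known references. Both are valid, and your gluing sketch for item~2 is in fact essentially how Santambrogio's Theorem~1.51 is proved.
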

\begin{proof}
    Regarding item 1), by \cite[Theorem 1.42]{santambrogio2015optimal},
    Kantorovich duality holds for $c$ l.s.c. and thus $\T_c$ can be written as a
    supremum of l.s.c. functions, hence is l.s.c.. For item 2), the result is
    verbatim \cite[Theorem 1.51]{santambrogio2015optimal}. For item 3), if
    $\T_c(\mu, \nu) = 0$ then there exists $\pi \in \Pi(\mu, \nu)$ such that
    $\int_{E^2}c(x,y) \dd \pi(x, y) = 0$ (existence follows from lower
    semi-continuity, as in \cite[Theorem 1.5]{santambrogio2015optimal}).  Thus
    for $\pi$-almost-every $(x, y)$, $c(x, y) = 0$, which by assumption gives
    $x=y$, hence (using the same technique as in \cite[Proposition
    5.1]{santambrogio2015optimal}) for any test function $\phi \in
    \mathcal{C}^0(E, \R)$:
    $$\int_E \phi(x)\dd\mu(x) = \int_{E^2}\phi(x)\dd\pi(x,y) =
    \int_{E^2}\phi(y)\dd\pi(x,y)= \int_E\phi(y)\dd\nu(y), $$
    which shows that $\mu = \nu$.
\end{proof}

\begin{remark}
    Throughout this work, we work in the compact setting (see
    \cref{ass:spaces_costs}), which alleviates substantial technicalities. For
    the costs $c(x, y) := \|x-y\|^p$ for some norm $\|\cdot\|$ on $\R^d$, we
    believe that our results can be shown with careful assumptions on the
    moments of the measures (and convergence of the sequence of moments in
    addition to weak convergence).
\end{remark}

\section{A Fixed-Point Algorithm}\label{sec:fp_algo}

\subsection{Algorithm Definition}

In this section, we define a sequence $(\mu_t) \in \mathcal{P}(\X)^\N$ that will
approach a barycentre of fixed measures $\nu_k \in \mathcal{P}(\Y_k)$. We
propose a modified version of the iterated scheme from \cite{alvarez2016fixed}
to solve \cref{eqn:def_bar}. To define an iteration mapping, for $\mu\in
\mathcal{P}(\X)$, we consider the set of multi-marginal couplings
\begin{equation}\label{eqn:def_Gamma_mu}
    \Gamma(\mu) := \left\{\gamma \in \mathcal{P}(\X \times \Y_1 \times \cdots \times \Y_K)\ :\ \forall k \in \llbracket 1, K \rrbracket,\; \gamma_{0, k} \in \Pi_{c_k}^*(\mu, \nu_k)\right\},
\end{equation}
where, for all $k$, $\gamma_{0,k}$ denotes the $\X \times \Y_k$ marginal of
$\gamma$ and $\Pi_{c_k}^*(\mu, \nu_k)$ denotes the set of all optimal couplings
for the transport problem between $\mu$ and $\nu_k$ associated to the cost
function $c_k$. The existence of such multi-couplings is a consequence of the
well-known ``gluing lemma'' (see \cite[Lemma 5.5]{santambrogio2015optimal}). The
following multi-coupling provides an explicit element of $\Gamma(\mu)$ given
$\pi_k \in \Pi_{c_k}^*(\mu, \nu_k)$:
\begin{equation}\label{eqn:glue}
    \gamma(\dd x, \dd y_1, \cdots, \dd y_K) := \mu(\dd x)\pi_1^x(\dd y_1) \cdots \pi_K^x(\dd y_K),
\end{equation}
where we wrote the disintegration of $\pi_k$ with respect to its first marginal
$\mu$ as $\pi_k(\dd x, \dd y_k) = \mu(\dd x) \pi_k^x(\dd y_k)$. By abuse of
notation, we will denote $B\#\gamma := B\#\gamma_{1, \cdots, K}$, where
$\gamma_{1, \cdots, K} \in \mathcal{P}(\Y_1 \times \cdots \times \Y_k)$ is the
marginal of $\gamma$ with respect to $(y_1, \cdots, y_K)$. In terms of random
variables, if $(X, Y_1, \cdots, Y_K) \sim \gamma$, then $B\#\gamma =
\mathrm{Law}[B(Y_1, \cdots, Y_K)]$. Denoting $B\#\Gamma(\mu) := \{B\#\gamma,\;
\gamma \in \Gamma(\mu)\}$, we define the multi-valued mapping $G$ which maps
$\mu\in \mathcal{P}(\X)$ to the set of next iterates $G(\mu) \subset
\mathcal{P}(\X)$:
\begin{equation}\label{eqn:def_G}
    G := \left\lbrace\begin{array}{ccc}
    \mathcal{P}(\X) & \rightrightarrows & \mathcal{P}(\X) \\
    \mu & \mapsto & B\#\Gamma(\mu) \\
    \end{array} \right. .
\end{equation}
Note that this construction is similar to that of \cite[Remark
3.4]{alvarez2016fixed}. 
Moreover, the candidate barycentre $\oll{\mu} = B\#\gamma_{1, \cdots, K}$ is
closely related to the multi-marginal formulation of the barycentre problem (see
\cref{eqn:multi_marginal_bar_equivalence}). Indeed, set $\pi := \gamma_{1,
\cdots, K} \in \Pi(\mu_1, \cdots, \mu_K)$, notice that $\pi$ is a candidate for
the multi-marginal problem of a particular structure induced by the reference
measure $\mu$. In the case where the plans $\gamma_{0, k}$ are induced by maps
$T_k$, then this structure is the coupling $(T_1, \cdots, T_K)\#\mu$. In terms
of random variables, if $X\sim\mu$, then the chosen coupling is $(T_1(X),
\cdots, T_K(X))$.

Taking inspiration from the $\W_2^2$ case, we can see informally the iterate
$\oll{\mu} \in G(\mu)$ as a local linearisation of $\mathcal{P}(\X)$. To
illustrate this intuition, we consider the case $\X = \Y_1 = \cdots = \Y_K$ and
assume that for each $k$, the set of optimal plans $\Pi_{c_k}^*(\mu, \nu_k)$ is
reduced to $(I, T_k)$, or in other words, that the Monge problem has a unique
solution. Informally, one may see the set of maps $T: \X \longrightarrow \X$
sending $\mu$ to a measure $T\#\mu \in \mathcal{P}(\X)$ as the tangent space to
$\mathcal{P}(\X)$ at $\mu$. As a result, the problem of finding a barycentre
$\oll{\mu}$ can be seen from the viewpoint of the reference measure $\mu$ in the
tangent space $T_\mu\mathcal{P}(\X)$ as the problem of finding $S \in
T_\mu\mathcal{P}(\X)$ such that $S\#\mu$ would minimise the cost $V$. Our
approach takes a barycentre of the optimal maps $T_k$ by choosing the candidate
$S := B \circ (T_1, \cdots, T_K)$. In the case of the squared-Euclidean cost on
the common space $\R^d$, this amounts to $S := \sum_k \lambda_k T_k$, which is
exactly the Linearised Optimal Transport barycentre approximation for the
reference measure $\mu$, as introduced in \cite[Section
4.3]{merigot2020quantitative}. We illustrate this viewpoint schematically in
\cref{fig:fixed_point_linearisation}.
\begin{figure}[ht]
    \centering
    \includegraphics[width = 0.6\linewidth]{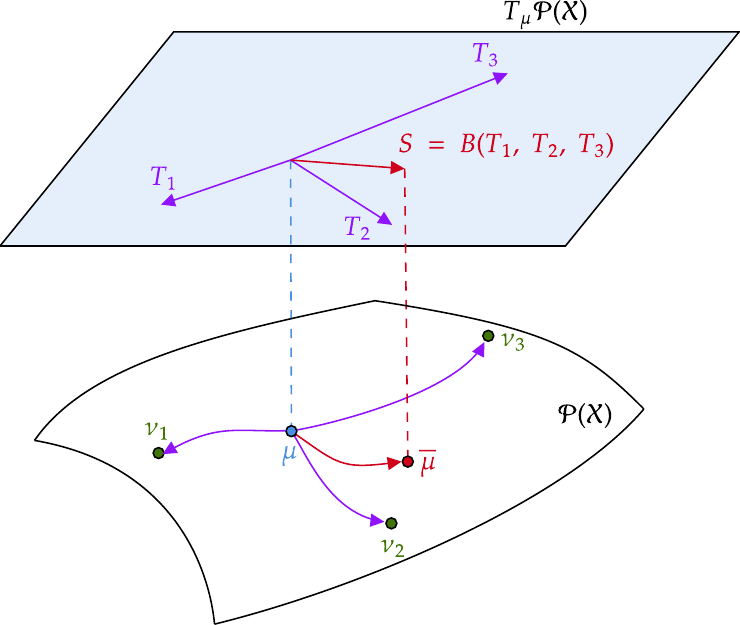}
    \caption{Illustration of the informal linearisation interpretation for the
    barycentre candidate $\oll{\mu} = B \circ (T_1, \cdots, T_K)\#\mu$.}
    \label{fig:fixed_point_linearisation}
\end{figure}

Starting from a measure $\mu_0 \in \mathcal{P}(\X)$, our algorithm consists of
choosing iterates through the multi-function $G$:
$$\forall t \in \N,\; \mu_{t+1} \in G(\mu_t).$$ We dedicate the next section to
a theoretical study of the convergence of this fixed-point iteration.

\subsection{Convergence of Fixed-Point Iterations}\label{sec:fp}

We can formulate a regularity result of the multi-valued map $G$: namely, we
will show that $G$ is \textit{upper hemi-continuous}. For the sake of
simplicity, we will take the following definition\footnote{We refer to
\cite[Chapter 17]{Aliprantis1994} for a more general definition and introduction
to these concepts on Polish spaces. We choose a stronger sequential definition
from \cite[Theorem 17.20]{Aliprantis1994}, which in their vocabulary corresponds
to u.h.c multi-functions with compact values.}:

\begin{definition}\label{def:uhc} A multi-valued function $\varphi: E
    \rightrightarrows F$ from a compact metric space $E$ to parts of a compact
    metric space $F$ is said to be \textit{upper hemi-continuous} (u.h.c.) if
    for any sequence $(x_n, y_n) \in (E\times F)^\N$ such that $y_n \in
    \varphi(x_n)$ and $x_n \xrightarrow[n\longrightarrow+\infty]{}x\in E$, there
    exists an extraction such that
    $y_{\alpha(n)}\xrightarrow[n\longrightarrow+\infty]{} y\in F$ with $y\in
    \varphi(x)$. 
\end{definition}

For more technical reasons, we also need to introduce the notion of
\textit{lower hemi-continuity}\footnote{whose formulation is is equivalent to
\cite[Definition 17.2]{Aliprantis1994}, by \cite[Theorem
17.21]{Aliprantis1994}.}

\begin{definition}\label{def:lhc} A multi-valued function $\varphi: E
    \rightrightarrows F$ from a compact metric space space $E$ to parts of a
    compact metric space space $F$ is said to be \textit{lower hemi-continuous}
    (l.h.c.) if for any sequence $(x_n)\in E^\N$ such that $x_n
    \xrightarrow[n\longrightarrow+\infty]{}x\in E$, then for any $y \in F$ such
    that $y \in \varphi(x)$, there exists an extraction $\alpha$ and a sequence
    $(y_n) \in F^\N$ such that $y_n \in \varphi(x_{\alpha(n)})$ and
    $y_{n}\xrightarrow[n\longrightarrow+\infty]{} y$.  
\end{definition}

To illustrate the technical differences between these two notions, we consider
two specific multi-valued functions in \cref{fig:ex_uhc_lhc}.
\begin{figure}[ht]
    \centering
    \includegraphics[width=.6\linewidth]{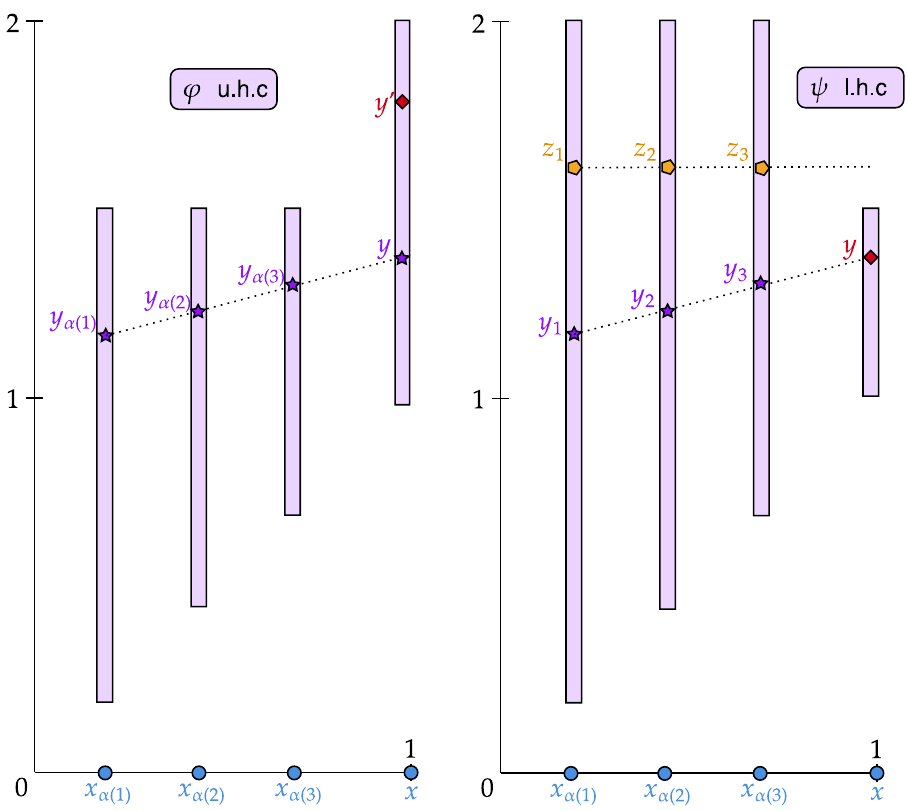}
    \caption{\textit{Left:} the multi-function $\varphi : [0, 1]
    \rightrightarrows [0, 2]$ defined by $\forall x \in [0, 1),\; \varphi(x) =
    [x, 3/2]$ and $\varphi(1) = [1, 2]$ is u.h.c.. Indeed, taking any sequence
    $(x_n, y_n)$ such that $y_n \in \varphi(x_n)$ and $x_n
    \xrightarrow[n\longrightarrow +\infty]{} x$, there exists an extraction
    $\alpha$ such that $y_{\alpha(n)} \xrightarrow[n \longrightarrow +\infty]{}
    y \in \varphi (x)$. However, $\varphi$ is not l.h.c. at 1 since the target
    $y' := 7/4 \in \varphi(1)$ can never be a limit of a sequence $(x_n, y_n)$
    with $x_n \xrightarrow[n\longrightarrow +\infty]{} 1$ and
    $y_n\in\varphi(x_n)$. \\\hspace{\textwidth} \textit{Right:} $\psi : [0, 1]
    \rightrightarrows [0, 2]$ defined by $\forall x \in [0, 1),\; \psi(x) = [x,
    2]$ and $\psi(1) = [1, 3/2]$ is l.h.c.. Take $x_n
    \xrightarrow[n\longrightarrow +\infty]{} x$ and a target $y \in \psi(x)$.
    Then there exists an extraction $\alpha$ and a sequence $(y_n)$ such that
    $y_n \in \psi(x_n)$ and $y_n \xrightarrow[n\longrightarrow +\infty]{} y$.
    However, $\psi$ is not u.h.c: take $x_n \xrightarrow[n\longrightarrow
    +\infty]{}1$ and the sequence $z_n := 5/3$. We have $\forall n \in \N,\; z_n
    \in \psi(x_n)$, however any subsequence of $(z_n)$ converges to $5/3 \notin
    \psi(1)$.}
    \label{fig:ex_uhc_lhc}
\end{figure}
Finally, an hemi-continuous multi-map is one that is both u.h.c. and l.h.c.:
\begin{definition}\label{def:hc} A multi-valued function $\varphi: E
    \rightrightarrows F$ from a compact metric space space $E$ to parts of a
    compact metric space space $F$ is said to be \textit{hemi-continuous} if it
    is both u.h.c. (\cref{def:uhc}) and l.h.c. (\cref{def:lhc}).
\end{definition}
We begin with technical lemmas on the hemi-continuity properties of sets of
couplings.
\begin{lemma}\label{lemma:Pi_hc} Consider $E, F$ compact metric spaces and $\nu
    \in \mathcal{P}(F)$. The multi-function
    \begin{equation}\label{eqn:def_Pi_nu}
        \Pi_\nu := \left\lbrace\begin{array}{ccc}
        \mathcal{P}(E) & \rightrightarrows & \mathcal{P}(E\times F) \\
        \mu & \mapsto & \Pi(\mu, \nu) \\ 
        \end{array}\right.
    \end{equation}
    is hemi-continuous. 
\end{lemma}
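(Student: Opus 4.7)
The plan is to verify the two properties of \cref{def:hc} separately.

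For upper hemi-continuity, take sequences $\mu_n \longrightarrow \mu$ in $\mathcal{P}(E)$ and $\pi_n \in \Pi(\mu_n, \nu)$. Since $E \times F$ is compact, $\mathcal{P}(E\times F)$ is weakly compact by Prokhorov's theorem, so a subsequence $\pi_{\alpha(n)}$ converges weakly to some $\pi \in \mathcal{P}(E\times F)$. The canonical projections onto $E$ and $F$ being continuous, push-forwards commute with weak limits, so the marginals of $\pi$ are $\mu$ and $\nu$; hence $\pi \in \Pi(\mu,\nu) = \Pi_\nu(\mu)$, as desired.

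Lower hemi-continuity is the more delicate point and is the main obstacle. Given $\mu_n \longrightarrow \mu$ and a target $\pi \in \Pi(\mu, \nu)$, I need to build couplings $\pi_n \in \Pi(\mu_n, \nu)$ converging along some extraction to $\pi$. The natural idea is to glue $\pi$ with an auxiliary coupling between $\mu_n$ and $\mu$ that concentrates on the diagonal in the limit. Since $E$ is compact, weak convergence of probabilities coincides with $W_1$-convergence, so choose $W_1$-optimal $\sigma_n \in \Pi(\mu_n, \mu)$, which satisfy $\int_{E \times E} d_E(x, x')\, d\sigma_n(x, x') \longrightarrow 0$. Apply the gluing lemma (\cite{santambrogio2015optimal}, Lemma 5.5) to $\sigma_n \in \Pi(\mu_n, \mu)$ and $\pi \in \Pi(\mu, \nu)$, which share the common marginal $\mu$, to obtain $\tau_n \in \mathcal{P}(E \times E \times F)$ with $(\mathrm{proj}_{12})_\# \tau_n = \sigma_n$ and $(\mathrm{proj}_{23})_\# \tau_n = \pi$. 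Set $\pi_n := (\mathrm{proj}_{13})_\# \tau_n$; it has marginals $\mu_n$ and $\nu$, so $\pi_n \in \Pi(\mu_n, \nu)$ as required.

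It remains to show that $\pi_n$ converges to $\pi$ along some subsequence. By compactness of $\mathcal{P}(E \times E \times F)$, extract $\tau_{\alpha(n)} \longrightarrow \tau$. The marginal constraint $(\mathrm{proj}_{23})_\# \tau = \pi$ is preserved in the limit. Moreover, by continuity of $d_E$ on the compact $E \times E$, passing to the limit in $\int d_E \, d\sigma_{\alpha(n)} \longrightarrow 0$ yields $\int d_E \, d(\mathrm{proj}_{12})_\# \tau = 0$, so $\tau$ is concentrated on the diagonal $\{x = x'\}$. Hence, for every $\phi \in \mathcal{C}^0(E \times F, \R)$,
\[
    \int \phi \, d\pi_{\alpha(n)} = \int \phi(x, y)\, d\tau_{\alpha(n)}(x, x', y) \xrightarrow[n \to \infty]{} \int \phi(x, y)\, d\tau(x, x', y) = \int \phi(x', y)\, d\tau(x, x', y) = \int \phi \, d\pi,
\]
where the penultimate equality uses that $\tau$ is supported on $\{x = x'\}$ and the last one uses $(\mathrm{proj}_{23})_\# \tau = \pi$. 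This proves $\pi_{\alpha(n)} \longrightarrow \pi$ weakly, establishing lower hemi-continuity.
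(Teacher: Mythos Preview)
Your proof is correct and follows essentially the same approach as the paper: identical argument for u.h.c., and for l.h.c.\ the same gluing construction (the paper phrases it with random variables $(X_n,X,Y)$, you with the gluing lemma and $\tau_n$). The only minor difference is in verifying $\pi_n\to\pi$: the paper observes directly that the coupling $((X,Y),(X_n,Y))$ gives $\W_1(\pi,\pi_n)\le \W_1(\mu,\mu_n)\to 0$, yielding convergence of the full sequence, whereas you extract a subsequence of $\tau_n$ and argue via diagonal concentration---a slightly longer route, but perfectly valid since the definition only asks for subsequential convergence.
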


\begin{proof}
    \textbf{u.h.c..} We apply \cref{def:uhc}: introduce $\mu_n
    \xrightarrow[n\longrightarrow+\infty]{w}\mu \in \mathcal{P}(E)$ and $\pi_n
    \in \Pi(\mu_n, \nu)$. Since $\mathcal{P}(E\times F)$ is compact, we can
    introduce $\alpha$ an extraction such that $\pi_{\alpha(n)}
    \xrightarrow[n\longrightarrow+\infty]{w} \pi \in \mathcal{P}(E\times F)$. By
    continuity of marginalisation, we deduce $\pi \in \Pi(\mu, \nu)$, which
    shows that $\Pi_\nu$ is u.h.c. by definition.
    
    \textbf{l.h.c..} We consider $\W_1$, the 1-Wasserstein distance on
    $\mathcal{P}(E)$ (i.e. $\T_{d_E}$), and use the same notation for the
    1-Wasserstein distance on $\mathcal{P}(E\times F)$, with the distance
    \[d_{E\times F}((x, y), (x', y')) := \max(d_E(x, x'), d_F(y, y')),\] both of
    which metrise the weak convergence by \cite[Corollary 6.13]{villani}. We
    apply \cref{def:lhc}: take $\mu_n
    \xrightarrow[n\longrightarrow+\infty]{w}\mu \in \mathcal{P}(E)$, and let
    $\pi \in \Pi(\mu, \nu)$. Consider $(X, Y)$ two coupled random variables of
    law $\pi$, and for $n\in \N$, take $X_n$ a random variable such that $(X,
    X_n)$ is an optimal coupling for $\W_1(\mu, \mu_n)$, and let $\pi_n :=
    \mathrm{Law}(X_n, Y)$. We have
    $$\W_1(\pi, \pi_n) \leq \mathbb{E}\left[d_{E\times F}\left((X, Y), (X_n,
    Y)\right)\right] = \mathbb{E}\left[\max(d_E(X, X_n), d_F(Y, Y))\right] =
    \W_1(\mu, \mu_n),$$ then by metrisation, we get $\W_1(\mu, \mu_n)
    \xrightarrow[n\longrightarrow+\infty]{} 0$, then $\pi_n
    \xrightarrow[n\longrightarrow+\infty]{w}\pi$, concluding the proof that
    $\Pi_\nu$ is l.h.c..
\end{proof}

We can apply Berge's maximisation theorem to show that the set of
\textit{optimal} transport plans is upper hemi-continuous for a continuous cost
function:

\begin{lemma}\label{lemma:Pi_star_uhc} Consider $E, F$ compact metric spaces, a
    continuous cost $c: E \times F \longrightarrow \R_+$ and $\nu \in
    \mathcal{P}(F)$. The multi-function
    \begin{equation}\label{eqn:def_Pi_star_nu}
        [\Pi_c^*]_\nu := \left\lbrace\begin{array}{ccc}
        \mathcal{P}(E) & \rightrightarrows & \mathcal{P}(E\times F) \\
        \mu & \mapsto & \Pi_c^*(\mu, \nu) \\ 
        \end{array}\right.
    \end{equation}
    is upper hemi-continuous. 
\end{lemma}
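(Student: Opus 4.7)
The plan is to verify the sequential definition of upper hemi-continuity directly, exploiting the previously established continuity of $\mathcal{T}_c$ (Lemma~\ref{lemma:Tc_regularity}, item~2) and the upper hemi-continuity half of Lemma~\ref{lemma:Pi_hc}. Although the statement mentions Berge's maximum theorem, the machinery we already have makes an elementary argument possible without invoking it as a black box.

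The steps I would follow are as follows. First, fix a sequence $\mu_n \xrightarrow[n\to+\infty]{w} \mu$ in $\mathcal{P}(E)$ together with $\pi_n \in \Pi_c^*(\mu_n, \nu)$, as the definition of u.h.c. requires. Since $\mathcal{P}(E \times F)$ is compact for the weak topology (as $E \times F$ is compact), extract a subsequence $\pi_{\alpha(n)}$ converging weakly to some $\pi^* \in \mathcal{P}(E \times F)$. Second, apply the u.h.c. part of Lemma~\ref{lemma:Pi_hc} (noting that each $\pi_n$ also lies in $\Pi(\mu_n, \nu)$) to conclude that $\pi^* \in \Pi(\mu, \nu)$. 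Third, observe that $c$ is continuous and bounded on the compact set $E\times F$, so the map $\pi \mapsto \int c\, \mathrm{d}\pi$ is continuous for weak convergence, giving
\[
\int_{E\times F} c\, \mathrm{d}\pi_{\alpha(n)} \xrightarrow[n\to+\infty]{} \int_{E\times F} c\, \mathrm{d}\pi^*.
\]
Fourth, since $\pi_{\alpha(n)}$ is optimal, the left-hand side equals $\mathcal{T}_c(\mu_{\alpha(n)}, \nu)$, which converges to $\mathcal{T}_c(\mu, \nu)$ by continuity of $\mathcal{T}_c$ (Lemma~\ref{lemma:Tc_regularity}, item~2). Hence $\int c\, \mathrm{d}\pi^* = \mathcal{T}_c(\mu, \nu)$, and combined with $\pi^* \in \Pi(\mu, \nu)$ this yields $\pi^* \in \Pi_c^*(\mu, \nu)$, as needed.

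I do not expect any genuine obstacle: the only subtlety is making sure to distinguish between the two ingredients — marginal stability (provided by Lemma~\ref{lemma:Pi_hc}) and cost-value stability (provided by continuity of $\mathcal{T}_c$) — and combining them. A purely Berge-style proof would instead invoke u.h.c. \emph{and} l.h.c. of $\Pi_\nu$ to secure that both an approximating sequence and a limit recovery are available; the shortcut above sidesteps the need for l.h.c. entirely by using that $\mathcal{T}_c$ itself is continuous rather than merely lower semi-continuous.
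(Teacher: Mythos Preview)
Your proof is correct and takes a genuinely different route from the paper. The paper proceeds by invoking Berge's maximum theorem: since $\pi \mapsto \int c\,\dd\pi$ is continuous and the constraint correspondence $\mu \rightrightarrows \Pi(\mu,\nu)$ is hemi-continuous with compact values (both halves of Lemma~\ref{lemma:Pi_hc}), Berge yields upper hemi-continuity of the argmin directly. Your argument instead runs an elementary sequential check: extract a weak limit, use continuity of marginalisation (the u.h.c.\ half of Lemma~\ref{lemma:Pi_hc}) to keep it feasible, and use continuity of $\T_c$ (Lemma~\ref{lemma:Tc_regularity}, item~2) to pin down its cost as optimal. The trade-off is that your route avoids both the l.h.c.\ half of Lemma~\ref{lemma:Pi_hc} and Berge as a black box, at the price of importing continuity of $\T_c$ as an input --- a result which, in Santambrogio, is itself essentially a Berge-type statement. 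From the standpoint of this paper, where Lemma~\ref{lemma:Tc_regularity} is already on record, your version is arguably the more self-contained of the two.
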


\begin{proof}
    By compactness, the map $\pi \longmapsto \int_{E\times F} c \dd \pi$ is
    continuous, and by \cref{lemma:Pi_hc}, the multi-map $\mu \rightrightarrows
    \Pi(\mu, \nu)$ is hemi-continuous (with compact values), hence by Berge's
    maximisation theorem from \cite[Theorem 17.31]{Aliprantis1994}, the map
    $$[\Pi_c^*]_\nu : \mu \longmapsto \Pi_c^*(\mu, \nu) = \underset{\pi \in
    \Pi(\mu, \nu)}{\argmin} \int_{E\times F}c\dd \pi $$ is upper
    hemi-continuous.
\end{proof}

\begin{remark}
    \cref{lemma:Pi_star_uhc} can also be deduced from \cite[Corollary
    5.21]{villani}.
\end{remark}

\begin{remark}\label{rem:Pi_star_not_lhc} The multifunction $[\Pi_c^*]_\nu$ is
    not \textbf{lower} hemi-continuous. Indeed, take the following points of
    $\R^2$:
    $$\forall n \in \N,\; x_n := (-1, 2^{-n}),\; y_n := (1, -2^{-n}),\; x_\infty
    := (-1, 0),\; y_\infty := (1, 0),\; w := (0, 1),\; z := (0, -1), $$ and the
    following discrete measures (see \cref{fig:Pi_star_not_lhc}): 
    $$\forall n \in \N,\; \mu_n := \cfrac{1}{2}(\delta_{x_n} + \delta_{y_n}),\;
    \mu_\infty := \cfrac{1}{2}(\delta_{x_\infty} + \delta_{y_\infty}),\; \nu :=
    \cfrac{1}{2}(\delta_w + \delta_z). $$ We have $\mu_n
    \xrightarrow[n\longrightarrow +\infty]{w}\mu_\infty$, and a unique OT plan
    for the cost $c(\cdot, \cdot) := \|\cdot - \cdot\|_2^2$ between $\mu_n$ and
    $\nu$, which sends $x_n$ to $w$ and $y_n$ to $z$:
    $$\forall n \in \N,\; \Pi_{c}^*(\mu_n, \nu) = \lbrace \pi_n \rbrace, \;
    \pi_n := \cfrac{1}{2}\left(\delta_{x_n, w} + \delta_{y_n, z}\right),$$ with
    $\pi_n \xrightarrow[n\longrightarrow +\infty]{w} \pi_\infty :=
    \cfrac{1}{2}\left(\delta_{x_\infty, w} + \delta_{y_\infty, z}\right)$.
    However, the set of optimal plans between the limit $\mu_\infty$ and $\nu$
    has more than one element, since $\|x_\infty - w\|_2^2 = \|x_\infty -
    z\|_2^2$ and $\|y_\infty - w\|_2^2 = \|y_\infty - z\|_2^2$:
    $$\Pi_{c}^*(\mu, \nu) = \left\lbrace (1-t) \pi_\infty + t \pi',\; t \in [0,
    1]\right\rbrace,\; \pi' := \cfrac{1}{2}\left(\delta_{x_\infty, z} +
    \delta_{y, w}\right).$$ We conclude that there does not exist an extraction
    $\alpha$ and a sequence $(\pi'_n)$ such that $\forall n \in \N,\; \pi'_n \in
    \Pi_{c}^*(\mu_{\alpha(n)}, \nu)$ and $\pi'_n
    \xrightarrow[n\longrightarrow+\infty]{w}\pi'$.
\end{remark}
\begin{figure}[ht]
    \centering
    \includegraphics{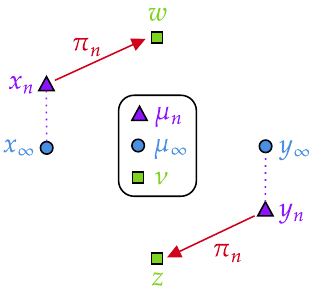}
    \caption{Counter-example from \cref{rem:Pi_star_not_lhc} showing that
    $\Pi_c^*(\cdot, \nu)$ is not lower hemi-continuous in general.}
    \label{fig:Pi_star_not_lhc}
\end{figure}

A direct corollary of \cref{lemma:Pi_star_uhc} is the upper hemi-continuity of
$\Gamma$ and $G$. For notational convenience, we introduce $\Z := \X \times \Y_1
\times \cdots \times \Y_K$.

\begin{prop}\label{prop:G_uhc} The multi-map
    $$\Gamma := \left\lbrace\begin{array}{ccc} \mathcal{P}(\X) &
        \rightrightarrows & \mathcal{P}(\Z) \\
        \mu & \mapsto & \Gamma(\mu) \\ 
        \end{array}\right. $$ where $\Gamma(\mu)$ is defined in
    \cref{eqn:def_Gamma_mu} and $G$ defined in \cref{eqn:def_G} are upper
    hemi-continuous (and compact-valued).
\end{prop}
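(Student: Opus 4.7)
The plan is to first establish the upper hemi-continuity and compact-valuedness of $\Gamma$, and then deduce the same for $G$ by observing that $G = \Phi \circ \Gamma$ where $\Phi : \mathcal{P}(\Z) \to \mathcal{P}(\X)$ is the continuous map sending $\gamma$ to $B\#\gamma_{1,\ldots,K}$. Continuity of $\Phi$ follows from continuity of marginalisation (weak convergence is preserved under projection of variables) composed with push-forward by the continuous function $B$ from \cref{lemma:B_C0}.

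For the u.h.c. of $\Gamma$, I would take sequences $\mu_n \xrightarrow[n\to\infty]{w} \mu$ in $\mathcal{P}(\X)$ and $\gamma_n \in \Gamma(\mu_n)$. Since $\Z$ is compact, $\mathcal{P}(\Z)$ is weakly compact, so we can extract a subsequence along which $\gamma_{\alpha(n)} \xrightarrow{w} \gamma \in \mathcal{P}(\Z)$. It remains to check that $\gamma \in \Gamma(\mu)$, i.e.\ that for each $k$, the $\X\times\Y_k$-marginal $\gamma_{0,k}$ lies in $\Pi_{c_k}^*(\mu, \nu_k)$. By continuity of marginalisation, $\gamma_{\alpha(n), (0,k)} \xrightarrow{w} \gamma_{0,k}$, and each $\gamma_{\alpha(n), (0,k)}$ belongs to $\Pi_{c_k}^*(\mu_{\alpha(n)}, \nu_k)$. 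Applying \cref{lemma:Pi_star_uhc} to this convergent pair of sequences (u.h.c.\ in the sense of \cref{def:uhc} combined with the fact that any subsequence of a convergent sequence has the same limit) yields $\gamma_{0,k} \in \Pi_{c_k}^*(\mu, \nu_k)$. Alternatively, one can verify the same conclusion directly from continuity of $\T_{c_k}$ (\cref{lemma:Tc_regularity}, item 2) and the fact that $\pi \mapsto \int c_k \dd \pi$ is continuous on $\mathcal{P}(\X\times\Y_k)$, giving $\int c_k \dd \gamma_{0,k} = \lim_n \T_{c_k}(\mu_{\alpha(n)}, \nu_k) = \T_{c_k}(\mu, \nu_k)$, together with $\gamma_{0,k} \in \Pi(\mu, \nu_k)$ by continuity of marginalisation.

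For compact-valuedness of $\Gamma(\mu)$, the same closed-graph-style argument shows that $\Gamma(\mu)$ is a closed subset of the compact set $\mathcal{P}(\Z)$: any weak limit of couplings in $\Gamma(\mu)$ still has all $(0,k)$-marginals in the closed set $\Pi_{c_k}^*(\mu,\nu_k)$. Finally, for $G$, any sequence $\oll{\mu}_n \in G(\mu_n)$ with $\mu_n \xrightarrow{w} \mu$ can be written as $\oll{\mu}_n = \Phi(\gamma_n)$ with $\gamma_n \in \Gamma(\mu_n)$; by u.h.c.\ of $\Gamma$ there is an extraction with $\gamma_{\alpha(n)} \xrightarrow{w} \gamma \in \Gamma(\mu)$, and continuity of $\Phi$ then gives $\oll{\mu}_{\alpha(n)} \xrightarrow{w} \Phi(\gamma) \in G(\mu)$. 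Compact-valuedness of $G(\mu) = \Phi(\Gamma(\mu))$ follows from continuity of $\Phi$ applied to a compact set.

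I do not foresee a substantial obstacle: everything rests on (i) compactness of $\mathcal{P}(\Z)$, (ii) \cref{lemma:Pi_star_uhc} providing u.h.c.\ of each set of optimal plans, and (iii) continuity of $B$ from \cref{lemma:B_C0}. The only mild subtlety is the step translating the u.h.c.\ of $[\Pi_{c_k}^*]_{\nu_k}$, which a priori only guarantees the existence of \emph{some} convergent subsequence landing in $\Pi_{c_k}^*(\mu,\nu_k)$, into a closed-graph statement; this is immediate from the fact that a sequence which already converges has the same limit along all its subsequences.
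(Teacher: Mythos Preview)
Your proposal is correct and follows essentially the same route as the paper: compactness of $\mathcal{P}(\Z)$ to extract a subsequence, \cref{lemma:Pi_star_uhc} (or the direct continuity alternative you mention) to verify that each $(0,k)$-marginal of the limit is optimal, and continuity of $B$ from \cref{lemma:B_C0} to transfer u.h.c.\ from $\Gamma$ to $G$. Your handling of the u.h.c.-to-closed-graph step is in fact slightly more economical than the paper's version, which performs $K$ successive sub-extractions before invoking continuity of marginalisation, but the underlying argument is identical.
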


\begin{proof}
    Let $\mu \in \mathcal{P}(\X)$. To show that $G(\mu)$ and $\Gamma(\mu)$ are
    compact, it suffices to show that $\Gamma(\mu)$ is closed, since
    $\mathcal{P}(\Z)$ is compact, and $G(\mu) = B\#\Gamma(\mu)$ with $B$
    continuous by \cref{lemma:B_C0}. Take $(\gamma_n) \in \Gamma(\mu)^\N$ such
    that $\gamma_n \xrightarrow[n\longrightarrow+\infty]{}\gamma \in
    \mathcal{P}(\Z)$. We show that $\gamma \in \Gamma(\mu)$. For $k\in
    \llbracket 1, K \rrbracket$ and $n \in \N$, we have $\gamma_n \in
    \Gamma(\mu)$, hence $[\gamma_n]_{0,k} \in \Pi_{c_k}^*(\mu, \nu_k)$. By
    continuity of marginalisation, we deduce that $\gamma \in \Pi(\mu, \nu_1,
    \cdots, \nu_K)$. By continuity of $\pi \longmapsto \int_{\X\times\Y_k}c_k
    \dd \pi$ (which holds by compactness), we deduce that $\gamma_{0,k} \in
    \Pi_{c_k}^*(\mu, \nu_k)$, hence $\gamma \in \Gamma(\mu)$.
    
    For the u.h.c. of $\Gamma$, take a sequence $(\mu_n) \in \mathcal{P}(\X)^\N$
    such that $\mu_n \xrightarrow[n\longrightarrow+\infty]{w}\mu\in
    \mathcal{P}(\X)$, and take a sequence $(\gamma_n) \in \mathcal{P}(\Z)^\N$,
    with $\gamma_n \in \Gamma(\mu_n)$. Since $\gamma_n \in \mathcal{P}(\Z)$
    which is compact, take $\alpha$ an extraction such that $\gamma_{\alpha(n)}
    \xrightarrow[n\longrightarrow+\infty]{w} \gamma \in \mathcal{P}(\Z)$. We
    will show that $\gamma \in \Gamma(\mu)$.
    
    Start with $k := 1$. For $n \in \N$, we have $\gamma_{\alpha(n)} \in
    \Gamma(\mu_{\alpha(n)})$, hence $\pi_{\alpha(n)}^{(1)} :=
    [\gamma_{\alpha(n)}]_{0,1} \in \Pi_{c_1}^*(\mu_{\alpha(n)}, \nu_1)$. By
    \cref{lemma:Pi_star_uhc}, the map $\mu \longmapsto \Pi_{c_1}^*(\mu, \nu_1)$
    is u.h.c., hence by definition, since $\mu_{\alpha(n)}
    \xrightarrow[n\longrightarrow+\infty]{w}\mu\in \mathcal{P}(\X)$ and
    $\pi_{\alpha(n)}^{(1)} \in \Pi_{c_1}^*(\mu_{\alpha(n)}, \nu_1)$, there
    exists an extraction $\alpha_1$ such that
    $\pi_{\alpha\circ\alpha_1(n)}^{(1)}
    \xrightarrow[n\longrightarrow+\infty]{w}\pi^{(1)} \in \Pi_{c_1}^*(\mu,
    \nu_1)$. 
    
    Continuing this method for $k \in \llbracket 2, K \rrbracket$ with
    successive sub-extractions $\alpha_k$, setting $\beta := \alpha \circ
    \alpha_1 \circ \cdots \circ \alpha_K$, we have for any $k\in\llbracket 1, K
    \rrbracket,\; [\gamma_{\beta(n)}]_{0, k} = \pi_{\beta(n)}^{(k)}
    \xrightarrow[n\longrightarrow+\infty]{w} \pi^{(k)} \in \Pi_{c_k}^*(\mu,
    \nu_k)$. The continuity of marginalisation implies $\gamma_{0,k} =
    \pi^{(k)}$, and in turn shows that $\gamma \in \Gamma(\mu)$, concluding that
    $\Gamma$ is u.h.c.
    
    For $G$, the fact that $G(\mu) = B\#\Gamma(\mu)$ and the continuity of $B$
    prove that $G$ is u.h.c. using the u.h.c. of $\Gamma$ by \cite[Theorem
    17.23]{Aliprantis1994}.
\end{proof}

In order to study the energy of iterates of $G$, we first require a technical
result on the error of sub-optimal ground barycentres for $B$. We introduce a
radius constant $R := \underset{(x, Y) \in \X\times\Y}{\max}\ d_\X(x, B(Y))$,
Note that $R>0$. Indeed, let $a,b \in \mathcal{X}$ be such that
$d_\mathcal{X}(a,b) = \mathrm{diam}(\mathcal{X})$. Then, for all $Y\in
\mathcal{Y}$, it holds
\[
2R \geq d_\mathcal{X}(a,B(Y)) + d_\mathcal{X}(B(Y), b) \geq d_\mathcal{X}(a,b) = \mathrm{diam}(\mathcal{X}),
\]
and so $R \geq \mathrm{diam}(\mathcal{X})/2>0.$
\begin{lemma}\label{lemma:costs_delta} There exists a function $\delta = \eta
    \circ d_\X$, with $\eta: [0, R] \longrightarrow \R_+$ lower-semi-continuous,
    non-decreasing and verifying $\eta(s)=0 \Longleftrightarrow s = 0$, such
    that
    \begin{equation}\label{eqn:costs_delta}
        \forall (x, Y) \in \X \times \Y,\; C(x, Y) \geq C(B(Y), Y) + \delta(x, B(Y)).
    \end{equation}
\end{lemma}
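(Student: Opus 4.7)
The plan is to produce $\eta$ as the left-continuous (hence lsc) envelope of a monotone gap function built by a compactness argument on $\X \times \Y$.

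First I would introduce the auxiliary function
\[
    \tilde{\eta}(s) := \inf\left\{C(x, Y) - C(B(Y), Y)\,:\, (x, Y) \in \X \times \Y,\; d_\X(x, B(Y)) \geq s\right\}
\]
for $s \in [0, R]$. The constraint set is non-empty throughout $[0, R]$ since the maximum defining $R$ is attained, and it shrinks with $s$, so $\tilde{\eta}$ is non-decreasing. By definition of $B$, the integrand is non-negative, so $\tilde{\eta} \geq 0$, with $\tilde{\eta}(0) = 0$ attained at $x = B(Y)$.

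The key step is to show $\tilde{\eta}(s) > 0$ for every $s > 0$. I would take a minimising sequence $(x_n, Y_n)$ with $d_\X(x_n, B(Y_n)) \geq s$, extract a subsequence converging to some $(x^*, Y^*)$ by compactness of $\X \times \Y$, and pass to the limit using \cref{lemma:B_C0} (continuity of $B$) together with continuity of $d_\X$ to conclude $d_\X(x^*, B(Y^*)) \geq s > 0$, hence $x^* \neq B(Y^*)$. Continuity of $C$ then gives $\tilde{\eta}(s) = C(x^*, Y^*) - C(B(Y^*), Y^*)$, and \cref{ass:B} (uniqueness of the ground barycentre) forces this quantity to be strictly positive, since otherwise $x^*$ would be a second minimiser of $x \mapsto C(x, Y^*)$.

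Second, I would define $\eta(0) := 0$ and $\eta(s) := \lim_{t \to s^-} \tilde{\eta}(t)$ for $s \in (0, R]$; this limit exists and is finite because $\tilde{\eta}$ is monotone and bounded, and satisfies $\eta \leq \tilde{\eta}$. Monotonicity of $\tilde{\eta}$ transfers directly to $\eta$, left-continuity combined with monotonicity yields lower semi-continuity on $[0, R]$, and the sandwich $\eta(s) \geq \tilde{\eta}(s/2) > 0$ for $s > 0$ delivers the equivalence $\eta(s) = 0 \Longleftrightarrow s = 0$. Setting $\delta(x, z) := \eta(d_\X(x, z))$, the desired inequality $C(x, Y) - C(B(Y), Y) \geq \eta(d_\X(x, B(Y)))$ is immediate from the defining inequality for $\tilde{\eta}$ together with $\eta \leq \tilde{\eta}$.

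The main obstacle is the strict positivity of $\tilde{\eta}(s)$ for $s > 0$: everything else reduces to routine envelope manipulations for monotone functions, but this step is where compactness of $\X \times \Y$, continuity of $B$ from \cref{lemma:B_C0}, and the uniqueness assumption \cref{ass:B} must be invoked jointly.
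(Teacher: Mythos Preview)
Your proposal is correct and follows essentially the same approach as the paper: both define the identical gap function $\tilde{\eta}(s) = \inf\{C(x,Y)-C(B(Y),Y) : d_\X(x,B(Y)) \geq s\}$, use compactness of $\X\times\Y$ together with continuity of $B$ (\cref{lemma:B_C0}) and \cref{ass:B} to obtain strict positivity, and read off monotonicity from the nested constraint sets. The only difference is in securing lower semi-continuity: the paper proves directly, via a subsequence extraction argument, that $\tilde{\eta}$ itself is lsc (so no envelope is needed), whereas you pass to the left-continuous regularisation and exploit the sandwich $\eta(s)\geq\tilde{\eta}(s/2)$; your route is a clean shortcut that replaces the paper's Step~3 with a one-line monotone-envelope argument, at the cost of working with a function that is a priori smaller than the paper's $\eta$.
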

\begin{proof}
    \step{1}{Definition of $\eta$.} First, for $(x, Y) \in \X \times \Y$, let
    $\Delta(x, Y) := C(x, Y) - C(B(Y), Y)$. By definition of $B$, $\Delta(x, Y)
    \geq 0$, and $\Delta(x, Y) = 0 \Longleftrightarrow x = B(Y)$. By assumption,
    $B$ and $C$ are continuous, which implies that $\Delta$ is also continuous.
    
    We now introduce $S := \max_{(x, Y)\in \X\times \Y} \Delta(x, Y)$. $R>0$
    ensures $S>0$. Define now the function $\eta$:
    \begin{equation}
        \eta := \app{[0, R]}{[0, S]}{u}{\underset{(x, Y) \in \X\times\Y}{\min}\ \left\{\Delta(x, Y)\ :\ d_\X(x, B(Y)) \geq u\right\}}.
    \end{equation}
    We show that for $u\in [0, R]$, the infimum is attained. First, let $f :=
    (x, Y) \longmapsto d_\X(x, B(Y))$, we remark that 
    $$\forall (x, Y) \in \X\times \Y,\; d_\X(x, B(Y)) \geq u \Longleftrightarrow
    (x, Y) \in f^{-1}([u, R]).$$ By continuity of $f$ and compactness of
    $\X\times\Y$, $\K_u := f^{-1}([u, R])$ is a compact subset of $\X\times\Y$.
    $\K_u$ is not empty since there exists $(x_R, Y_R) \in \X\times\Y$ such that
    $d_\X(x_R, B(Y_R)) = R$ (by continuity, compactness and definition of $R$).
    
    \step{2}{Proof of \cref{eqn:costs_delta}.} Let $(x, Y) \in \X\times \Y$, and
    $u := d_\X(x, B(Y))$. By definition, $(x, Y) \in \K_u$, hence $\eta(u) \leq
    \Delta(x, Y)$, which is equivalent to \cref{eqn:costs_delta}.
    
    \step{3}{Lower semi-continuity of $\eta$}. Let $u_n \xrightarrow[n
    \longrightarrow +\infty]{} u \in [0, R]$, and for $n \in \N$ introduce $
    (x_n, Y_n) \in \K_{u_n}$ such that $\eta(u_n) = \Delta(x_n, Y_n)$. Since
    $(\eta(u_n)) \in [0, S]^\N$, consider an extraction $\alpha$ such that
    $\eta(u_{\alpha(n)}) \xrightarrow[n \longrightarrow +\infty]{} a_{\alpha}
    \in [0, S]$. By compactness of $\X\times \Y$, we can extract from
    $(x_{\alpha(n)}, Y_{\alpha(n)})_n$ a subsequence such that $(x_{\alpha \circ
    \beta (n)}, Y_{\alpha\circ \beta (n)})
    \xrightarrow[n\longrightarrow+\infty]{}(x_{\alpha, \beta}, Y_{\alpha,
    \beta}) \in \X\times \Y$. By construction of the sequence $(x_n, Y_n)_n$, we
    have 
    \begin{equation}\label{eqn:eta_lsc_subsequence_ineq}
        \forall n \in \N, \; d_\X(x_{\alpha\circ\beta(n)},
    B(Y_{\alpha\circ\beta(n)})) \geq u_{\alpha\circ\beta(n)},
    \end{equation}
    since $(x_{\alpha\circ\beta(n)}, Y_{\alpha\circ\beta(n)}) \in
    \K_{u_{\alpha\circ\beta(n)}}$. Taking the limit in
    \cref{eqn:eta_lsc_subsequence_ineq} yields  $d_\X(x_{\alpha, \beta},
    B(Y_{\alpha, \beta})) \geq u,$ by continuity of $B$, \cref{lemma:B_C0}. This
    shows that $(x_{\alpha, \beta}, Y_{\alpha, \beta}) \in \K_u$, hence $\eta(u)
    \leq \Delta(x_{\alpha, \beta}, Y_{\alpha, \beta})$. However, by continuity
    of $\Delta$, and since $\Delta(x_{\alpha(n)}, Y_{\alpha(n)})
    \xrightarrow[n\longrightarrow+\infty]{}a_\alpha$, it follows that
    $\Delta(x_{\alpha, \beta}, Y_{\alpha, \beta}) = a_\alpha$. Since the
    subsequential limit $a_\alpha$ was chosen arbitrarily, we conclude that
    $\eta(u) \leq \underset{n\longrightarrow+\infty}{\liminf}\eta(u_n)$, hence
    $\eta$ is lower semi-continuous.
    
    \step{4}{$\eta$ is non-decreasing.} Let $0 \leq u \leq v \leq R$, we have
    $\K_v \subset \K_u$, hence
    $$\eta(u) = \underset{(x, Y) \in \K_u}{\min}\ \Delta(x, Y) \leq
    \underset{(x, Y) \in \K_v}{\min}\ \Delta(x, Y) = \eta(v). $$
    \step{5}{Separation property.} Let $u \in [0, R]$ such that $\eta(u) = 0$.
    This implies that there exists $(x, Y) \in \X\times \Y$ such that $\Delta(x,
    Y) = 0$ and $d_\X(x, B(Y))\geq u$. Now by Step 1 this implies $x = B(Y)$,
    thus $d_\X(x, B(Y)) = 0$ and finally $u=0$.
\end{proof}
\cref{lemma:costs_delta} is a generalisation of the following elementary
Euclidean property in $\R^d$ for the cost $\|\cdot-\cdot\|_2^2$, for which
$B(y_1, \cdots, y_K) = \sum_{k=1}^K\lambda_ky_k$ verifies the following
identity:
$$\forall x \in \R^d,\; \forall (y_1, \cdots, y_K) \in (\R^d)^K,\; \oll{x} :=
\sum_{k=1}^K\lambda_k y_k:\; \sum_{k=1}^K\lambda_k\|x-y_k\|_2^2 =
\sum_{k=1}^K\lambda_k \|\oll{x}-y_k\|_2^2 + \|x-\oll{x}\|_2^2.$$
Given the inequality in \cref{eqn:costs_delta}, we can now find an informative
inequality between $V(\oll{\mu})$ and $V(\mu)$ for any $\oll{\mu}\in G(\mu)$.

\begin{prop}\label{prop:decreasing_iterates} Let $\mu \in \mathcal{P}(\X)$ and
    $\oll{\mu} \in G(\mu)$. Then $V(\mu) \geq V(\oll{\mu}) + \T_\delta(\mu,
    \oll{\mu})$. If $\mu^*$ is a barycentre, then $G(\mu^*) = \{\mu^*\}$.
\end{prop}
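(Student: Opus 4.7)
The plan is to unpack the definition $\oll{\mu}\in G(\mu)$ into the existence of a witness $\gamma\in\Gamma(\mu)$ with $\oll{\mu}=B\#\gamma_{1,\ldots,K}$, then evaluate $V(\mu)$ as a single integral against $\gamma$ and apply the pointwise inequality from \cref{lemma:costs_delta}. More precisely, since each marginal $\gamma_{0,k}$ is optimal for $c_k$, one gets
\[
V(\mu)=\sum_{k=1}^K\int c_k(x,y_k)\dd\gamma_{0,k}(x,y_k)=\int_{\Z}C(x,Y)\dd\gamma(x,Y),
\]
and then \cref{eqn:costs_delta} yields
\[
V(\mu)\geq \int_{\Z} C(B(Y),Y)\dd\gamma(x,Y)+\int_{\Z}\delta(x,B(Y))\dd\gamma(x,Y).
\]

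Next I would lower-bound each of the two resulting integrals by recognising them as transport costs. For the first, the measure $(B,\mathrm{proj}_k)\#\gamma_{1,\ldots,K}$ is a coupling in $\Pi(\oll{\mu},\nu_k)$ (its marginals are $B\#\gamma_{1,\ldots,K}=\oll{\mu}$ and $\nu_k$), so $\int c_k(B(Y),y_k)\dd\gamma_{1,\ldots,K}(Y)\geq \T_{c_k}(\oll{\mu},\nu_k)$, summing over $k$ gives $V(\oll{\mu})$. For the second, the pushforward $(x,B(Y))\#\gamma$ lies in $\Pi(\mu,\oll{\mu})$, so the integral is at least $\T_\delta(\mu,\oll{\mu})$. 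Combining produces the desired inequality $V(\mu)\geq V(\oll{\mu})+\T_\delta(\mu,\oll{\mu})$.

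For the second assertion, I would let $\mu^*$ be a barycentre and pick any $\oll{\mu}\in G(\mu^*)$. By optimality of $\mu^*$, $V(\mu^*)\leq V(\oll{\mu})$, so combining with the freshly-proved inequality forces $\T_\delta(\mu^*,\oll{\mu})=0$. To conclude $\oll{\mu}=\mu^*$, I would invoke \cref{lemma:Tc_regularity} item 3 applied to the cost $\delta$ on $\X\times\X$: it is lower-semi-continuous (since $d_\X$ is continuous and $\eta$ is l.s.c. and non-decreasing, so $\eta\circ d_\X$ is l.s.c.) and separates points ($\delta(x,x')=\eta(d_\X(x,x'))=0$ iff $d_\X(x,x')=0$ iff $x=x'$, using the separation property of $\eta$ from \cref{lemma:costs_delta}).

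The routine part is the integral manipulation; the only place where care is needed is verifying that $\delta$ meets the hypotheses of \cref{lemma:Tc_regularity} item 3, i.e.\ checking l.s.c.\ of $\eta\circ d_\X$ from the one-variable l.s.c.\ monotone $\eta$. This is the main—though mild—obstacle, and once handled the uniqueness of the push-forward of $\mu^*$ by $G$ follows immediately from $\T_\delta(\mu^*,\oll{\mu})=0\Rightarrow\mu^*=\oll{\mu}$, giving $G(\mu^*)=\{\mu^*\}$.
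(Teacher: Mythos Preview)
Your proposal is correct and follows essentially the same route as the paper: write $V(\mu)$ as an integral of $C$ against a witness $\gamma\in\Gamma(\mu)$, apply the pointwise bound from \cref{lemma:costs_delta}, and lower-bound the two resulting integrals via the sub-optimal couplings $(B,P_k)\#\gamma_{1,\ldots,K}\in\Pi(\oll{\mu},\nu_k)$ and $(I,B)\#\gamma\in\Pi(\mu,\oll{\mu})$, then use $\T_\delta$-separation (via \cref{lemma:Tc_regularity} item 3) for the barycentre claim. Your extra care in checking that $\delta=\eta\circ d_\X$ is l.s.c.\ is exactly what the paper packages into its reference to \cref{lemma:costs_delta,lemma:Tc_regularity}.
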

\begin{proof}
    Let $\oll{\mu} = B\#\gamma \in G(\mu)$ with $\gamma \in \Gamma(\mu)$. By
    definition of $\T_{c_k}$ and by optimality of the bi-marginals
    $\gamma_{0,k}$ of $\gamma$:
    \begin{align}
        \sum_{k=1}^K \T_{c_k}(\mu, \nu_k) &= \int_{\X\times\Y} C(x, Y) \dd \gamma(x, Y) \label{eqn:decreasing_iterates_def_Tck}\\
        &\geq \int_{\X\times\Y} (C(B(Y), Y) + \delta(x, B(Y))) \dd \gamma(x, Y)  \label{eqn:decreasing_iterates_B_ineq} \\
        &\geq \sum_{k=1}^K\T_{c_k}(B\#\gamma, \nu_k) + \T_\delta(\mu, B\#\gamma)\label{eqn:decreasing_iterates_suboptimal_coupling} \\ 
        &= V(\oll{\mu}) + \T_\delta(\mu, \oll{\mu}).
    \end{align}
    The inequality in \cref{eqn:decreasing_iterates_B_ineq} comes from
    \cref{lemma:costs_delta}, and the inequality in
    \cref{eqn:decreasing_iterates_suboptimal_coupling} comes from the definition
    of $\Gamma(\mu)$ (\cref{eqn:def_Gamma_mu}), which allows us to write for $k
    \in \llbracket 1, K\rrbracket$:
    \begin{align*}
        \int_{\X\times\Y}c_k(B(Y), y_k) \dd \gamma(x, Y) &= \int_{\X\times\Y_k}c_k \dd \pi_k,
    \end{align*}
    where we introduce the coupling $\pi_k := (B, P_k)\#[\gamma_{1, \cdots,
    K}]$, with $P_k(y_1, \cdots, y_K) = y_k$. The first marginal of $\pi$ is
    $B\#[\gamma_{1, \cdots, K}]$ (which we write $B\#\gamma$ for legibility),
    and the second marginal is $\nu_k$. Similarly, 
    $$\int_{\X\times\Y}\delta(x, B(Y))\dd\gamma(x, Y) = \int_{\X\times\X}\delta
    \dd [(I, B)\#\gamma] \geq \T_\delta(\mu, B\#\gamma).$$
    
    If $\mu^*$ is a barycentre, then by definition for any $\oll{\mu} \in
    G(\mu)$, we have $V(\oll{\mu})\geq V(\mu^*)$, thus
    \cref{eqn:decreasing_iterates_B_ineq,eqn:decreasing_iterates_suboptimal_coupling}
    are equalities, and $\T_\delta(\mu^*, \oll{\mu})=0$. By
    \cref{lemma:costs_delta,lemma:Tc_regularity}, the cost $\delta$ guarantees
    the separation property of the transport cost $\T_\delta$, hence $\mu^* =
    \oll{\mu}$.
\end{proof}

Applying \cref{prop:decreasing_iterates} to the $\W_2$ case for absolutely
continuous measures yields \cite[Proposition 4.3]{alvarez2016fixed}, wherein the
cost $\T_\delta$ is simply $\W_2^2$. This decrease was also studied by
\cite[Proposition 4.4]{lindheim2023simple} in the discrete setting $\W_p^p$.

The inequality in \cref{prop:decreasing_iterates} shows that the amount of
decrease in the energy between two iterations is lower-bounded by a transport
discrepancy $\T_\delta$ (we remind that in the squared-Euclidean case,
$\T_\delta = \W_2^2$). We can now show convergence of iterates of $G$, in the
sense that any weakly converging subsequence converges towards a fixed point of
$G$.

\begin{theorem}\label{thm:fixed_point_iterates_cv} For any $\mu_0 \in
    \mathcal{P}(\X)$, let $(\mu_t)$ verifying $\mu_{t+1} \in G(\mu_t)$. Then
    $(\mu_t)$ has converging subsequences, and any weakly converging subsequence
    necessarily converges towards a $\mu \in \mathcal{P}(\X)$ such that $\mu \in
    G(\mu)$.
\end{theorem}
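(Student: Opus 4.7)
The plan is to combine weak compactness of $\mathcal{P}(\X)$ with the monotonicity estimate of \cref{prop:decreasing_iterates} and the upper hemi-continuity of $G$ from \cref{prop:G_uhc}. Since $\X$ is compact metric, $\mathcal{P}(\X)$ is itself weakly compact (Prokhorov's theorem), so any sequence $(\mu_t)$ admits weakly converging subsequences; this takes care of the first claim. The heart of the matter is to identify any such limit as a fixed point of $G$.

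First I would observe that $V$ is continuous on $\mathcal{P}(\X)$ by \cref{lemma:Tc_regularity}(2) applied to each continuous cost $c_k$, and in particular bounded on the compact $\mathcal{P}(\X)$. Iterating \cref{prop:decreasing_iterates} gives
\begin{equation*}
V(\mu_0) \geq V(\mu_{T+1}) + \sum_{t=0}^{T}\T_\delta(\mu_t, \mu_{t+1}),
\end{equation*}
so the non-negative series $\sum_t \T_\delta(\mu_t, \mu_{t+1})$ is summable; in particular $\T_\delta(\mu_t, \mu_{t+1}) \xrightarrow[t\to +\infty]{} 0$. Let now $\alpha$ be an extraction with $\mu_{\alpha(t)} \xrightarrow[t\to +\infty]{w} \mu \in \mathcal{P}(\X)$; I want to show $\mu \in G(\mu)$. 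Since $\mu_{\alpha(t)+1} \in G(\mu_{\alpha(t)})$, upper hemi-continuity of $G$ (\cref{prop:G_uhc} together with \cref{def:uhc}) yields a further extraction $\beta$ and a limit $\nu \in G(\mu)$ with $\mu_{\alpha\circ\beta(t)+1} \xrightarrow[t\to +\infty]{w} \nu$.

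It remains to show that $\nu = \mu$. The cost $\delta = \eta \circ d_\X$ is lower semi-continuous on $\X\times \X$ (composition of a non-decreasing l.s.c. function with a continuous function), so by \cref{lemma:Tc_regularity}(1) the functional $\T_\delta$ is l.s.c. for weak convergence. Applying this to the pair $(\mu_{\alpha\circ\beta(t)}, \mu_{\alpha\circ\beta(t)+1}) \xrightarrow[t\to+\infty]{w} (\mu, \nu)$ gives
\begin{equation*}
\T_\delta(\mu, \nu) \leq \liminf_{t\to +\infty}\T_\delta(\mu_{\alpha\circ\beta(t)}, \mu_{\alpha\circ\beta(t)+1}) = 0.
\end{equation*}
Since $\delta(x,y) = 0 \iff d_\X(x,y) = 0 \iff x = y$ by the separation property of $\eta$ (\cref{lemma:costs_delta}, Step 5), \cref{lemma:Tc_regularity}(3) applies and forces $\mu = \nu$, whence $\mu \in G(\mu)$.

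The main subtlety is the double extraction: the u.h.c. of $G$ only produces a limit $\nu$ for the shifted sequence along a sub-extraction $\beta$, and a priori $\nu$ need not equal $\mu$. Closing that gap is exactly what the telescoping bound on $\T_\delta$, combined with the l.s.c./separation properties gathered in \cref{lemma:Tc_regularity} and \cref{lemma:costs_delta}, is designed to do; no continuity of $\delta$ is needed.
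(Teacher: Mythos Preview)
Your proof is correct and follows essentially the same approach as the paper: compactness of $\mathcal{P}(\X)$, the decrease estimate of \cref{prop:decreasing_iterates} to force $\T_\delta(\mu_t,\mu_{t+1})\to 0$, an extraction via upper hemi-continuity, and the l.s.c./separation properties of $\T_\delta$ to identify the shifted limit with $\mu$. The only cosmetic differences are that you invoke the u.h.c.\ of $G$ directly (the paper passes through $\Gamma$ and then pushes forward by $B$), and you obtain $\T_\delta(\mu_t,\mu_{t+1})\to 0$ by a telescoping/summability argument rather than by monotone convergence of $(V(\mu_t))$; both are equivalent and your version is marginally more streamlined.
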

\begin{proof}
    Fix a sequence $(\mu_t)$ such that $\mu_{t+1} \in G(\mu_t)$ and write
    $\mu_{t+1} = B\#[\gamma_t]_{1,\cdots, K}$ with $\gamma_t \in \Gamma(\mu_t)$.
    Since $\X$ is compact, the space $\mathcal{P}(\X)$ is also compact, and so
    the sequence $(\mu_t)$ is tight.
    Consider an extraction $\alpha$ such that $\mu_{\alpha(t)}
    \xrightarrow[t\longrightarrow +\infty]{w} \mu \in \mathcal{P}(\X)$. By
    u.h.c. of $\Gamma$ (\cref{prop:G_uhc}), there exists an extraction $\beta$
    such that $\gamma_{\alpha\circ\beta (t)}
    \xrightarrow[t\longrightarrow+\infty]{w}\gamma \in \Gamma(\mu)$.
    
    By \cref{prop:decreasing_iterates}, the sequence $(V(\mu_t))$ is
    non-increasing and non-negative, hence it is convergent, and as a result,
    $\underset{t\longrightarrow+\infty}{\lim}\left[V(\mu_{\alpha\circ\beta(t)})
    - V(\mu_{\alpha\circ\beta(t)+1})\right]=0$. Using the lower-bound in
    \cref{prop:decreasing_iterates} we obtain:
    $$\forall t \in \N,\; 0\leq \T_\delta(\mu_{\alpha\circ\beta(t)},
    \mu_{\alpha\circ\beta(t)+1}) \leq V(\mu_{\alpha\circ\beta(t)}) -
    V(\mu_{\alpha\circ\beta(t)+1}), $$ and take the limit inferior:
    \begin{equation}\label{eqn:fp_cv_plus_one_ineq}
        0\leq \underset{t\longrightarrow+\infty}{\liminf}\
        \T_{\delta}(\mu_{\alpha\circ\beta(t)}, \mu_{\alpha\circ\beta(t)+1}) \leq 0.
    \end{equation}
    We remind that $(\mu_{\alpha\circ\beta(t) + 1})_t$ is a sequence in
    $\mathcal{P}(\X)$ which is compact, and take $\rho\in \mathcal{P}(\X)$ a
    subsequential limit of $(\mu_{\alpha\circ\beta(t) + 1})_t$. By
    lower-semi-continuity of $\T_\delta$ (which holds by applying
    \cref{lemma:Tc_regularity} item 1) with \cref{lemma:costs_delta}),
    \cref{eqn:fp_cv_plus_one_ineq} provides $\T_\delta(\mu, \rho) = 0$. By
    \cref{lemma:Tc_regularity} item 3), we obtain that $\rho=\mu$, thus any
    subsequential limit of $(\mu_{\alpha\circ\beta(t) + 1})_t$ is $\mu$, which
    proves that it converges weakly to $\mu$.
    
    Writing abusively $B\#\gamma$ for $B\#\gamma_{1, \cdots, K}$, we conclude:
    $$\begin{array}{ccc} \mu_{\alpha\circ\beta(t)+1} &
        \xrightarrow[t\longrightarrow +\infty]{w} & \mu \\
        \verteq & & \verteq \\
        B\#\gamma_{\alpha\circ\beta(t)} & \xrightarrow[t\longrightarrow
        +\infty]{w} & B\#\gamma \\
    \end{array} $$ hence we have found $\gamma \in \Gamma(\mu)$ such that $\mu =
    B\#\gamma$, proving $\mu \in G(\mu)$.
\end{proof}

\begin{remark}
    Fixed-points of $G$ may not be unique and may not be barycentres, as shown
    the the following example. Take the following measures:
    $$\mu := \frac{1}{2}\left(\delta_{(0, 1)} + \delta_{(0, -1)}\right),\; \nu_1
    := \frac{1}{2}\left(\delta_{(-2, 1)} + \delta_{(2, -1)}\right),\; \nu_2 :=
    \frac{1}{2}\left(\delta_{(-2, -1)} + \delta_{(2, 1)}\right).$$ Between $\mu$
    and $\nu_k$, the unique OT plan for the squared-Euclidean cost is given by a
    permutation, with 
    $$\pi_1^* = \frac{1}{2}\left(\delta_{(0, 1)\otimes(-2, 1)} + \delta_{(0,
    -1)\otimes(2, -1)}\right),$$ and likewise for $\pi_2^*$. The next iterate of
    $G$ and $H$ are both equal to $\mu$ itself, which is distinct from the
    unique barycenter $\mu^* = \frac{1}{2}(\delta_{(-2, 0)} + \delta_{(2, 0)})$.
    We show this example in \cref{fig:ex_fp_G_not_bar}.
\end{remark}

\begin{figure}[ht]
    \begin{center}
        \includegraphics[width=0.4\linewidth]{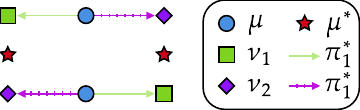}
    \end{center}
    \caption{Example showing a non-barycentre measure $\mu$ which is a
    fixed-point of $G$ and $H$.}
    \label{fig:ex_fp_G_not_bar}
\end{figure} 

\subsection{Expression of the Iterates when the Plans are
Maps}\label{sec:bar_proj}

In some cases, the plans introduced in $\Gamma(\mu)$ (\cref{eqn:def_Gamma_mu})
are induced by maps, which is to say that they are each supported on a set of
the form $(x, T_k(x))$. This is the case in the specific setting chosen by
\cite{alvarez2016fixed}, which is to say that all measures are absolutely
continuous on $\R^d$ and the costs are all $c(x,y) = \|x-y\|_2^2$. By Brenier's
Theorem (as stated in \cite[Theorem 1.22]{santambrogio2015optimal}, for
example), this implies that optimal transport couplings are supported on the
graph of a map. This property holds under the weaker condition that the costs
verify the Twist condition (see \cite[Theorem 10.28]{villani} for example). In
this case, each set optimal transport plans $\Pi_{c_k}^*(\mu, \nu_k)$ is
composed of one element $(I, T_k)\#\mu$, and as a result, the expression of
$G(\mu)$ becomes substantially simpler, namely $G(\mu) = \{B \circ (T_1, \cdots,
T_K) \#\mu\}$. In the linearisation interpretation
(\cref{fig:fixed_point_linearisation}), this expression can be understood as
taking the ground barycentre of the maps $T_k$ using the ground map $B$. 

Drawing inspiration from this observation, we can define an alternative
iteration consisting in choosing a map $T_k$ as the barycentric projection of
the coupling $\gamma_{0,k}\in \Pi_{c_k}^*(\mu, \nu_k)$ for $\gamma \in
\Gamma(\mu)$: see \cref{def:barycentric_projection} and
\cref{fig:barycentric_projection}.
\begin{definition}\label{def:barycentric_projection} The \textbf{barycentric
    projection} of a coupling $\pi \in \Pi(\mu, \nu)$ for $\mu\in
    \mathcal{P}(E)$ and $\nu \in \mathcal{P}(F)$ is the map $\oll{\pi}: E
    \longrightarrow F$, which is defined for $\mu$-almost-every $x\in E$ as:
    $$\oll{\pi}(x) = \int_{F}y \pi^x(\dd y),  $$ where we wrote the
    disintegration $\pi(\dd x, \dd y) = \mu(\dd x)\pi^x(\dd y)$. In terms of
    random variables, one may write this expression as:
    $$\oll{\pi}(x) = \underset{(X, Y) \sim \pi}{\mathbb{E}}[Y\ |\ X = x]. $$
\end{definition}
\begin{figure}[ht]
    \centering
    \includegraphics[width=.5\linewidth]{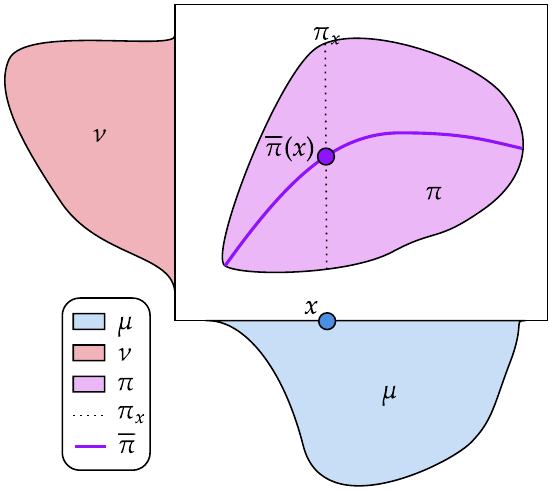}
    \caption{Illustration of a barycentric projection. The disintegration of the coupling $\pi$ with respect to its first marginal $\mu$ at $x$ is the measure $\pi_x$ concentrated on the dotted line. The barycentric projection of $\pi$ evaluated at $x$ is the mean of the measure $\pi_x$.}
    \label{fig:barycentric_projection}
\end{figure}
Note that for this expression to be well-defined, the target space $F$ must be a
\textit{convex space}, i.e. a space where one may define convex combinations of
points (or, more precisely, expectations of probability measures). In the case
$\X = \Y_1 = \cdots = \Y_K$, a meaningful choice of convex combination is the
ground barycentre $B$. We can apply this barycentric projection idea to define
an alternate multi-mapping $H: \mathcal{P}(\X) \rightrightarrows
\mathcal{P}(\X)$:
\begin{equation}\label{eqn:def_H}
    \forall \mu \in \mathcal{P}(\X),\; H(\mu) := \left\{B \circ (\oll{\gamma_{0,1}}, \cdots, \oll{\gamma_{0,K}}) \# \mu,\; \gamma \in \Gamma(\mu) \right\}.
\end{equation}
In general, for $\pi \in \Pi(\mu, \nu),\; \oll{\pi}\#\mu \neq \nu$, hence one
does not necessarily have $\forall \widetilde{\mu} \in H(\mu),\;
V(\widetilde{\mu}) \leq V(\mu)$. However, if each $\Pi_{c_k}^*(\mu, \nu_k)$ are
composed of plans supported by maps, then $H(\mu) = G(\mu)$. In the case of
discrete measures and for the squared Euclidean cost, the iterations of $H$
correspond to the approach proposed in \cite[Algorithm 2]{cuturi14fast}.

\subsection{The Particular Case of Conditionally Independent Couplings}\label{sec:fp_cond_indep}

In \cref{eqn:def_Gamma_mu}, we chose all possible multi-couplings with optimal
bi-marginals. It is possible to restrict the set of couplings to the smaller set
of multi-couplings with conditionally independent marginals, i.e.
multi-couplings $\gamma \in \Pi(\mu, \nu_1, \cdots, \nu_K)$ such that there
exists $\pi_k \in \Pi_{c_k}^*(\mu, \nu_k)$ for $k\in \llbracket 1, K \rrbracket$
such that $\gamma_{0, k} = \pi_k$ and specifically:
$$\gamma(\dd x, \dd y_1, \cdots, \dd y_K) := \mu(\dd x)\ \pi_1^x(\dd y_1) \cdots
\pi_K^x(\dd y_K), $$ as in \cref{eqn:glue}. In terms of random variables, this
corresponds to the choice of $(X, Y_1, \cdots, Y_K)$ such that $(X, Y_k) \sim
\pi_k$ and conditionally to $X$, the variables $Y_1, \cdots, Y_K$ are
independent. We denote by $\Gamma_{\otimes}(\mu)$ the set of such couplings, and
consider the associated multi-map $G_\otimes := B\#\Gamma_\otimes$ as in
\cref{eqn:def_G}. It is clear that $\forall \mu \in \mathcal{P}(\X),\;
G_\otimes(\mu)\subset G(\mu)$. In particular, this implies subsequential
convergence converges of iterates $\mu_{t+1}=G_\otimes(\mu_t)$ to a fixed-point
of $G$. In \cref{prop:fp_Gotimes}, we show that the convergence is to a
fixed-point of $G_\otimes$ in the discrete case (measures with finite support).
First, we emphasise that with a discrete initialisation measure and discrete
measures $(\nu_k)$, the support of the sequence $(\mu_t)$ is finite and always
contained in:
$$\left\{B(y_1, \cdots, y_K),\; \forall k \in \llbracket 1, K \rrbracket,\; y_k
\in \supp(\nu_k) \right\},$$
which ensures that iterates remain discrete.
\begin{prop}\label{prop:fp_Gotimes} Take $\mu_0 \in \mathcal{P}(\X)$ a discrete
    measure and $\nu_1, \cdots, \nu_k \in \mathcal{P}(\Y_1)\times \cdots \times
    \mathcal{P}(\Y_K)$ discrete measures. Then any sub-sequential limit $\mu \in
    \mathcal{P}(\X)$ of the sequence $(\mu_t)$ defined by $\mu_{t+1} \in
    G_\otimes(\mu_t)$ verifies $\mu\in G_\otimes(\mu)$.
\end{prop}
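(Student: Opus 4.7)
The plan is to upgrade the conclusion of Theorem \ref{thm:fixed_point_iterates_cv} (which gives $\mu \in G(\mu)$) to the stronger $\mu \in G_\otimes(\mu)$ by exploiting the discreteness of the data to preserve the product (conditionally independent) form of the couplings in the limit. The key observation is that the support of every iterate is contained in the finite set
$$S := \left\{ B(y_1, \ldots, y_K) \ :\ \forall k,\ y_k \in \supp(\nu_k)\right\},$$
so the entire sequence $(\mu_t)$ lives in the compact set $\mathcal{P}(S)$, and the conditional plans $\pi_k^{t,x}$ involved in each $\gamma_t \in \Gamma_\otimes(\mu_t)$ live in the compact simplices $\mathcal{P}(\supp(\nu_k))$. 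This finite-dimensional setting is what allows the product structure to survive in the weak limit, whereas in general the weak closure of $\Gamma_\otimes(\mu)$ is strictly larger.

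Concretely, I would first note that since $G_\otimes(\mu_t) \subset G(\mu_t)$, Theorem \ref{thm:fixed_point_iterates_cv} already yields an extraction $\sigma$ such that $\mu_{\sigma(t)} \to \mu$, $\mu_{\sigma(t)+1} \to \mu$, and $\gamma_t := \gamma_{\sigma(t)} \to \gamma$ weakly for some $\gamma \in \Gamma(\mu)$ with $\mu = B \# \gamma$. Since $S$ has only finitely many subsets, I would further extract so that $\supp(\mu_{\sigma(t)}) = A$ is constant, and write $\mu_{\sigma(t)} = \sum_{x \in A} m_{x,t}\, \delta_x$ with $m_{x,t} > 0$. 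Each $\gamma_t$ then reads
$$\gamma_t = \sum_{x \in A} m_{x,t}\, \delta_x \otimes \pi_1^{t,x} \otimes \cdots \otimes \pi_K^{t,x},$$
with $\pi_k^{t,x} \in \Pi_{c_k}^*(\mu_{\sigma(t)}, \nu_k)$ disintegrations.

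Next I would perform finitely many further extractions: for each of the $|A|$ values of $x$ and each $k \in \llbracket 1, K \rrbracket$, the simplex $\mathcal{P}(\supp(\nu_k))$ is compact, so we may assume $\pi_k^{t,x} \to \widetilde{\pi}_k^x$, and similarly $m_{x,t} \to m_x \in [0,1]$ with $\sum_x m_x = 1$. Weak convergence on the product then gives
$$\gamma_t \xrightarrow[t\to\infty]{w} \widetilde{\gamma} := \sum_{x \in A} m_x\, \delta_x \otimes \widetilde{\pi}_1^x \otimes \cdots \otimes \widetilde{\pi}_K^x.$$
By uniqueness of weak limits, $\gamma = \widetilde{\gamma}$, which shows that $\gamma$ has the conditionally independent structure of Equation \eqref{eqn:glue} with respect to its first marginal $\mu = \sum_{x : m_x > 0} m_x \delta_x$.

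It remains to check optimality of the bi-marginals $\gamma_{0,k}$: they are weak limits of $[\gamma_t]_{0,k} \in \Pi_{c_k}^*(\mu_{\sigma(t)}, \nu_k)$, and since $\mu_{\sigma(t)} \to \mu$, the upper hemi-continuity of $[\Pi_{c_k}^*]_{\nu_k}$ from \cref{lemma:Pi_star_uhc} forces $\gamma_{0,k} \in \Pi_{c_k}^*(\mu, \nu_k)$ (no further extraction is needed since the limit already exists). This gives $\gamma \in \Gamma_\otimes(\mu)$ and $\mu = B\#\gamma$, hence $\mu \in G_\otimes(\mu)$. The main obstacle is really the central step: conditional independence is not weakly closed in general, and the proof hinges on the discrete-support hypothesis allowing a reduction to finite-dimensional simplex extractions, after which the product form trivially passes to the limit.
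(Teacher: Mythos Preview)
Your proposal is correct and follows essentially the same route as the paper's proof: both arguments rely on the observation that all iterates (for $t\geq 1$) are supported in the finite set $S=B\bigl(\prod_k\supp(\nu_k)\bigr)$, then perform finitely many compactness extractions on the conditional measures $\pi_k^{t,x}\in\mathcal{P}(\supp(\nu_k))$ so that the product (conditionally independent) structure passes to the weak limit, and finally invoke the upper hemi-continuity of $\Gamma$ (\cref{prop:G_uhc}) to ensure optimality of the limiting bi-marginals. The only cosmetic difference is that you further extract so that $\supp(\mu_{\sigma(t)})=A$ is constant, whereas the paper keeps the full ambient set $\{x_1,\dots,x_n\}$ and allows some masses to vanish; both conventions lead to the same conclusion.
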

\begin{proof}
    We follow a technique used in the proof of \cite[Theorem
    9.6]{gozlan2017kantorovich}, specifically page 65. As commented before the
    statement of the result, the sequence $(\mu_t)$ remains discrete. Write for
    $t \in \N,\; \mu_{t+1} = B\#\gamma_t$ with $\gamma_t
    \in\Gamma_\otimes(\mu_t)$, and take an extraction $\alpha$ such that
    $\mu_{\alpha(t)}\xrightarrow[t\longrightarrow +\infty]{w} \mu$. As done in
    \cref{thm:fixed_point_iterates_cv}, the u.h.c. property of $\Gamma$ allows
    us to extract a subsequence $\beta$ such that $\gamma_{\alpha\circ\beta(t)}
    \xrightarrow[t\longrightarrow +\infty]{w} \gamma \in \Gamma(\mu)$, since we
    have the (point-wise) inclusion $\Gamma_\otimes \subset \Gamma$. As shown in
    the proof of \cref{thm:fixed_point_iterates_cv}, the sequence
    $(\mu_{\alpha\circ\beta(t)})_t$ weakly converges to $\mu$, hence we now want
    to show that $\gamma \in \Gamma_\otimes(\mu)$, which would allow to conclude
    $\mu \in G_\otimes(\mu)$.

    For $t\in \N$ and $k \in \llbracket 1, K \rrbracket$, introduce
    $\pi_{\alpha\circ\beta(t)}^{(k)} := [\gamma_{\alpha\circ\beta(t)}]_{0,k}$,
    and its disintegration with respect to $\mu_{\alpha\circ\beta(t)}$ as
    $$ \pi_{\alpha\circ\beta(t)}^{(k)}(\dd x, \dd y_k) =
    \mu_{\alpha\circ\beta(t)}(\dd x)\ [\pi_{\alpha\circ\beta(t)}^{(k)}]^x(\dd
    y_k).$$ As argued above the statement of the proposition, the sequence
    $(\mu_t)_t$ remains discrete with a support contained in $B(\prod_k
    \supp(\nu_k))$ and thus $(\gamma_t)$ also remains discrete, and its first
    marginal $\mu_t$ has a finite support of size at most $n := \prod_k
    \#\supp(\nu_k)$ on fixed points $(x_1, \cdots, x_n)$. For simplicity, we
    will see the measures $(\mu_t)$ as supported on $\X_n := \{x_1, \cdots,
    x_n\}$ with possibly zero mass at some of these points, and in such cases,
    we define $[\pi_{\alpha\circ\beta(t)}^{(k)}]^x$ as the null measure
    $\mathcal{M}_0$. Since $\gamma_{\alpha\circ\beta(t)} \in
    \Gamma_\otimes(\mu_{\alpha\circ\beta(t)})$, by definition we can write its
    disintegration with respect to $\mu_{\alpha\circ\beta(t)}$ as:
    $$\gamma_{\alpha\circ\beta(t)}(\dd x, \dd y_1, \cdots , \dd y_K) =
    \mu_{\alpha\circ\beta(t)}(\dd x)\  [\pi_{\alpha\circ\beta(t)}^{(1)}]^x(\dd
    y_1) \cdots [\pi_{\alpha\circ\beta(t)}^{(K)}]^x(\dd y_K).$$   
    For $i \in \llbracket 1, n \rrbracket$ and $k \in \llbracket 1, K
    \rrbracket,$ there exists an extraction $\chi_{i,k}$ such that the sequence
    $\left([\pi_{\alpha \circ \beta\circ\chi_{i,k}(t)}^{(k)}]^{x_i}\right)_{t\in
    \N}$ converges weakly to a $[\pi^{(k)}]^{x_i} \in
    \mathcal{P}(\X)\times\{\mathcal{M}_0\}$. We choose the extractions as
    successive sub-extractions, such that $\chi_{1, 2}$ is a sub-extraction of
    $\chi_{1, 1}$, until $\chi_{n, K}$ which is a sub-extraction of all previous
    extractions. We then define $\chi := \chi_{n, K}$. The extraction $\chi$ is
    such that for $i \in \llbracket 1, n \rrbracket$ and $k \in \llbracket 1, K
    \rrbracket$, the sequence
    $\left([\pi_{\alpha\circ\beta\circ\chi(t)}^{(k)}]^{x_i}\right)_{t\in \N}$
    converges weakly to $[\pi^{(k)}]^{x_i}$. By verifying against test
    functions, we deduce the following disintegration holds for $\gamma$:
    $$\gamma(\dd x, \dd y_1, \cdots, \dd y_K) = \mu(\dd x)\ [\pi^{(1)}]^x(\dd
    y_1) \cdots [\pi^{(K)}]^x(\dd y_K),$$ which shows that $\gamma \in
    \Gamma_\otimes(\mu)$, and thus $\mu \in G_\otimes(\mu)$.
\end{proof}
\begin{remark}
    The proof of \cref{prop:fp_Gotimes} can also be written for discrete
    measures with at-most-countable supports through a diagonal extraction
    argument, we kept to finite supports for legibility.
\end{remark}
\section{Focus on the Discrete Case}\label{sec:discrete}

In this section, we will formulate the fixed-point algorithm in the discrete
case, and discuss some algorithmic aspects.

\subsection{Discrete Expression and Algorithms}\label{sec:discrete_expressions}

Consider discrete measures $\nu_k := \sum_{i=1}^{n_k}b_{k, i}\delta_{y_{k, i}}
\in \mathcal{P}(\R^{d_k})$ where $\forall k \in \llbracket 1, K \rrbracket,\;
\forall i \in \llbracket 1, n_k \rrbracket,\; y_{k,i}\in \R^{d_k}$. We stack the
support of $\nu_k$ into $Y_k \in \R^{n_k\times d_k}$ such that $[Y_k]_{i,\cdot}
= y_{k, i}$, and similarly introduce $b_k := (b_{k, i})_{i=1}^{n_k}\in
\Delta_{n_k}$. 

First, our objective is to re-write the iteration \cref{eqn:def_G} in this
discrete setting, with an initial measure $\mu = \sum_{i=1}^na_i\delta_{x_i}\in
\mathcal{P}(\R^d)$. For each $k$, we choose $\pi_k \in \R_+^{n\times n_k}$ an
optimal transport plan, which is to say a solution of the Kantorovich linear
program:
\begin{equation*}
    \underset{\Pi(a, b_k)}{\argmin}\ \sum_{i=1}^n\sum_{j=1}^{n_k}c_k(x_i, y_{k, j})\pi_{i,j},
\end{equation*}
where $\Pi(a, b_k) := \left\{\pi\in \R_+^{n\times n_k}\ :\  \pi \mathbf{1} =
a,\; \pi^T \mathbf{1} = b_k\right\}.$ Seeing multi-couplings $\gamma \in
\Gamma(\mu)$ as a tensors $\gamma \in \R^{n\times n_1 \times \cdots,
n_k}$, the discrete expression of $G$ reads:
\begin{equation}\label{eqn:G_discrete}
    G(\mu) = \left\{\sum_{j_1, \cdots, j_K}\left(\sum_{i=1}^n
    \gamma_{1, j_1, \cdots, j_K}\right)\delta\left(
    B(y_{1, j_1}, \cdots, y_{K, j_K})\right),\; 
    \gamma \in \Gamma(\mu)\right\}.
\end{equation}
A visualisation of \cref{eqn:G_discrete} using the multi-coupling from
\cref{eqn:glue} is provided in \cref{fig:G_discrete}.
\begin{figure}[ht]
    \begin{center}
        \includegraphics[width=0.6\linewidth]{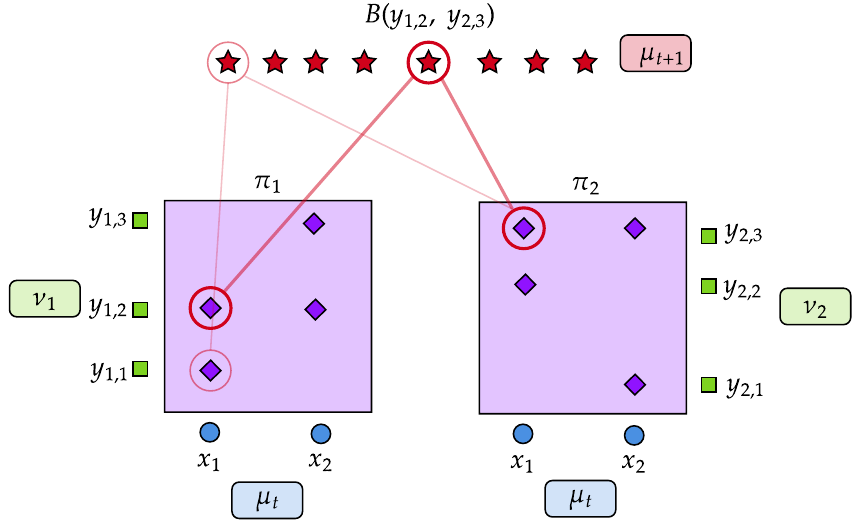}
    \end{center}
    \caption{Visual explanation of the discrete fixed-point iteration $G$. For
    each point $x_i$ in the support of $\mu_t$, we look at all the points
    $(y_{1, j_1}, \cdots, y_{K, j_K})$ which are assigned from $x_i$ by the
    multi-coupling $\gamma$, then the ground barycentre $B(y_{1, j_1}, \cdots,
    y_{K, j_K})$ is taken on all these tuples, with weights given by the
    multi-coupling.}
    \label{fig:G_discrete}
\end{figure}

As in \cite{beier2025tangential}, we can use a generalisation of the
North-West Corner (NWC) method to compute $\gamma \in \Gamma(\mu)$ with
prescribed bi-marginals $\pi_k \in \Pi(a, b_k)$. In \cref{alg:NWC}, we present
the NWC strategy. The idea is to fill the entries of $\gamma$ greedily using
entries $\pi^{(k)}_{i, j_k}$ for increasing $i, j_1, \cdots, j_K$ (see
\cite[Section 3.4.2]{computational_ot} for a presentation of the method in the
standard setting).

\begin{figure}[ht]
	\centering
	\begin{minipage}{.95\linewidth}
		\begin{algorithm}[H]
			\SetAlgoLined \KwData{For $k \in \llbracket 1, K \rrbracket$,
			transport plan $\pi_k \in \Pi(a, b_k)$, with $a \in \Delta_n$ and
			$b_k \in \Delta_{n_k}$.}
			\KwResult{Gluing $\NWC(\pi_1, \cdots, \pi_K) = \gamma \in 
            \Pi(a, b_1, \cdots, b_K)$ such that each $\gamma_{0,k} = \pi_k$.}
			\textbf{Initialisation:} $\gamma = 0_{n\times n_1\times \cdots
			\times n_K}$ and for $k \in \llbracket 1, K \rrbracket,\;
            P_k = \pi_k$.\\
			\For{$i \in \llbracket 1, n \rrbracket$}{ 
                Set $(j_1, \cdots, j_K) = (1, \cdots, 1)$ and $u = a_i$;\\
                \While{$u>0$}{
                    Compute $v = \min (P_{i, j_1}^{(1)}, \cdots, P_{i,
                    j_K}^{(K)})$;\\
                    Assign $\gamma_{i, j_1, \cdots, j_K} = v$ and decrease $u
                    \gets u - v$;\\
                    \For{$k \in \llbracket 1, K \rrbracket$}{ 
                        Decrease $P_{i, j_k}^{(k)} \gets P_{i, j_k}^{(k)} - v$;\\
                        \If{$P_{i, j_k}^{(k)} = 0$}{ Increment $j_k \gets j_k +
                        1$; }
                    }
                }
            } 
			\caption{North-West Corner Gluing.}
			\label{alg:NWC}
		\end{algorithm}
	\end{minipage}
\end{figure}

Noticing that \cref{eqn:G_discrete} only requires the $n_1\times \cdots
\times n_K$-tensor $\rho := \gamma_{1, \cdots, K}$, it is possible to only store
the indices $(j_1, \cdots, j_K)$ such that $\rho_{j_1, \cdots, j_K} > 0$, as
well as the corresponding weights $\rho_{j_1, \cdots, j_K}$. This avoids the
prohibitive memory cost of storing the full tensor $\gamma$, and takes advantage
of the sparsity of the multi-coupling $\gamma$: if each $\pi_k$ is an extremal
point of $\Pi(a, b_k)$, we conjecture that $\NWC(\pi_1, \cdots, \pi_K)$ is an
extremal point of $\Pi(a, b_1, \cdots, b_K)$, and thus $\#\supp \gamma \leq n +
\sum_k n_k - K$ (adapting techniques from \cite[Theorem
2]{anderes2016discrete}).

Thanks to \cref{eqn:G_discrete} we formalise the fixed-point iterations in the
discrete case in \cref{alg:fp}.
\begin{figure}[ht]
	\centering
	\begin{minipage}{.95\linewidth}
		\begin{algorithm}[H]
			\SetAlgoLined \KwData{barycentre coefficients $(\lambda_k) \in
			\Delta_K$, for $k \in \llbracket 1, K \rrbracket$, support of
			$\nu_k$: $Y_k \in \R^{n_k\times d_k}$, weights of $\nu_k$: $b_k \in
			\Delta_{n_k}$ and cost function $c_k:
			\R^d\times\R^{d_k}\longrightarrow\R_+$. Number of iterations $T$,
			initial size $n\geq 1$ and stopping criterion $\alpha \geq 0$.}
			\KwResult{Barycentre $\mu_T = \sum_{i=1}^{N_t}a_i^{(T)}
            \delta_{x_i^{(T)}}$.}
			\textbf{Initialisation:} Choose $\mu_0 =
			\sum_{i=1}^{n}a_i^{(0)}\delta_{x_i^{(0)}}$ with $a^{(0)} \in
			\Delta_n$ and $X^{(0)} \in \R^{n\times d}$.\\
			\For{$t \in \llbracket 0, T- 1 \rrbracket$}{ \For{$k \in \llbracket
                1, K \rrbracket$}{ Solve the OT problem: $\pi^{(k)}
                \in\underset{\pi \in \Pi(a^{(t)}, b_k)}{\argmin}\
                \sum_{i,j}\pi_{i,j}c_k(x_i^{(t)}, y_{k, j})$;}
                Compute $\gamma = \NWC(\pi^{(1)}, \cdots, \pi^{(K)})$;\\
                Compute $\rho = \gamma_{1, \cdots, K} = 
                \left[\sum_i \gamma_{j_1,
                \cdots, j_K}\right]_{j_1,\cdots, j_k}$ and write 
                $\supp\rho = \left((j_1^{(i)}, \cdots, j_K^{(i)})
                \right)_{i=1}^{N_t}$;\\
                \For{$i \in \llbracket 1, N_t \rrbracket$}{
                    Compute $x_i^{(t+1)} = B(y_{1, j_1^{(i)}}, \cdots, 
                    y_{K, j_K^{(i)}})$ and 
                    $a_i^{(t+1)} = \rho_{j_1, \cdots, j_K}$;\\
                } \If{$\W_2^2(\mu_{t+1}, \mu_t) < \frac{\alpha}{N_t} \|X^{(t)}\|_2^2$}{
				Declare convergence and terminate. } } \KwRet{$a^{(T)},
				X^{(T)}$}
			\caption{Discrete iteration of $G$.}
			\label{alg:fp}
		\end{algorithm}
	\end{minipage}
\end{figure}
Given our considerations on the support of NWC gluing, we expect (without
formal proof) the upper bound $\#\supp \mu_T \leq n + T(\sum_k n_k) - TK$. This
is the same conclusion as \cite{beier2025tangential}, which they also state
without proof about their Gromov-Wasserstein fixed-point iteration
\cite[Algorithm 5.2]{beier2025tangential}. From a memory perspective, the
algorithm does not require the storage of each $\gamma \in \R^{N_t\times
n_1\times\cdots\times n_K}$, as remarked for the NWC algorithm.

In some specific cases, the expression in \cref{eqn:G_discrete} becomes simpler.
If the weights $a$ and $b_k$ are all uniform and $n=n_1=\cdots=n_K$, then the
Birkhoff-von-Neumann Theorem allows the choice of each transport plan $\pi_k$ as
permutation assignments $[\pi_k]_{i,j} =
\frac{1}{n}\mathbbold{1}(\sigma_k(i)=j)$. In this case, the expression of
$G(\mu)$ becomes:
\begin{equation}\label{eqn:G_discrete_unif}
    G(\mu) = \frac{1}{n}\sum_{i=1}^n\delta\left(B(y_{1, \sigma_1(i)}, \cdots, y_{K, \sigma_K(i)})\right).
\end{equation}
If one takes the barycentric projections of the OT plans $\pi^{(k)}$ in
\cref{eqn:G_discrete}, one obtains a discrete expression of $H$ (from
\cref{eqn:def_H}) written in \cref{eqn:H_discrete} and visualised in \cref{fig:H_discrete}.
\begin{equation}\label{eqn:H_discrete}
    H(\mu) = \left\{\sum_{i=1}^na_i\delta\left[B\left((1/a_i)\sum_{j=1}^{n_1}\pi_{i,j}^{(1)}y_{1, j}, \cdots, (1/a_i)\sum_{j=1}^{n_K}\pi_{i,j}^{(K)}y_{K, j}\right)\right],\; \pi^{(k)} \in \Pi_{c_k}^*(\mu, \nu_k)\right\}.
\end{equation}
\begin{figure}[ht]
    \begin{center}
        \includegraphics[width=0.6\linewidth]{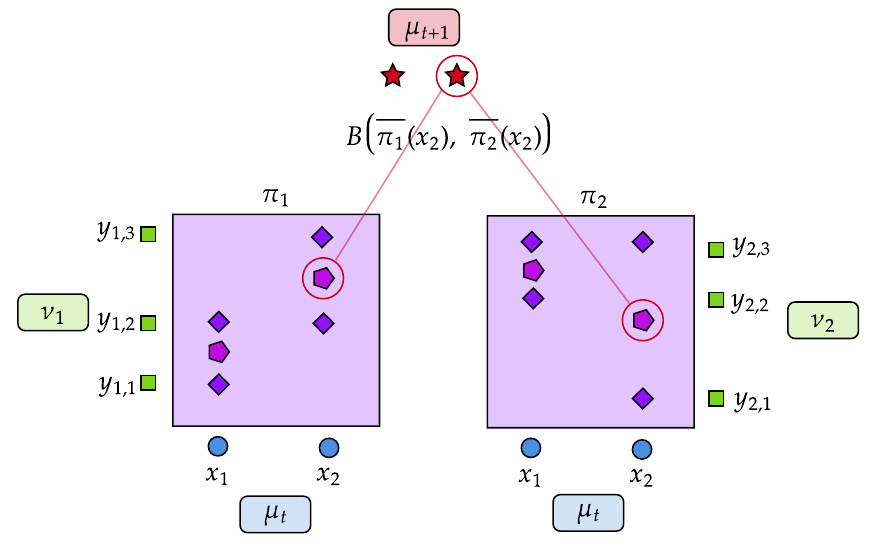}
    \end{center}
    \caption{Visual explanation of the discrete fixed-point iteration $H$. For
    each point $x_i$ in the support of $\mu_t$ and $k \in \llbracket 1, K
    \rrbracket$, we look at all the points $(y_{k, j_k})$ which are assigned
    from $x_i$ by the OT plan $\pi^{(k)}$ between $\mu$ and $\nu_k$, then take
    their barycenter $\oll{\pi}_k(x_i)$ (pentagons on the figure). The point
    $x_i$ in the support of $\mu_t$ is then sent to the point
    $B(\oll{\pi}_1(x_i), \cdots, \oll{\pi}_K(x_i))$ in $\mu_{t+1}$.}
    \label{fig:H_discrete}
\end{figure}
Contrary to $G$, for $H$ the number of points in the support of $\mu_t$ remains
the same, and the weights $a$ remain fixed. In this setting, the optimisation is
done solely on the positions, which can be seen as a Lagrangian formulation.
Note that in the squared-Euclidean case, \cref{eqn:H_discrete} is the formula
proposed in \cite[Equation 8]{cuturi14fast} and
\href{https://pythonot.github.io/gen_modules/ot.lp.html#ot.lp.free_support_barycentre}{currently
implemented} in the Python OT library \cite{flamary2021pot}. A technical
difference is that \cite{cuturi14fast} also proposes an optimisation over the
barycentre weights (by sub-gradient descent), while the fixed-point approach by
\cite{alvarez2016fixed} and ours do not. Furthermore, \cite{cuturi14fast}
suggests a computational simplification by using barycentric projections of
\textit{entropic} plans,
for which, as for $H$, there are no theoretical guarantees (to our knowledge).

The practical advantage of the map-supported expressions in
\cref{eqn:G_discrete_unif,eqn:H_discrete} over \cref{eqn:G_discrete} is
computational: since the support size of $\mu_t$ cannot increase, the
cost of computing the OT plans at Line 4 is smaller.
We shall see in \cref{sec:uniqueness_discrete} that in some
cases, Kantorovich solutions are almost-surely permutations for random supports.
While convenient, this expression only holds when all the measures have the same
amount of points, in contrast to the barycentric expression
\cref{eqn:H_discrete}.

\begin{figure}[ht]
	\centering
	\begin{minipage}{.95\linewidth}
		\begin{algorithm}[H]
			\SetAlgoLined \KwData{barycentre coefficients $(\lambda_k) \in
			\Delta_K$, for $k \in \llbracket 1, K \rrbracket$, support of
			$\nu_k$: $Y_k \in \R^{n_k\times d_k}$, weights of $\nu_k$: $b_k \in
			\Delta_{n_k}$ and cost function $c_k:
			\R^d\times\R^{d_k}\longrightarrow\R_+$. Number of iterations $T$,
			barycentre size $n\geq 1$, weights $a\in \Delta_n$ and stopping
			criterion $\alpha \geq 0$.} \KwResult{Barycentre $\mu_T =
			\sum_{i=1}^na_i\delta_{x_i^{(T)}}$.} \textbf{Initialisation:} Choose
			$\mu_0 = \sum_{i=1}^{n}a_i\delta_{x_i^{(0)}}$ with $X^{(0)} \in
			\R^{n\times d}$.\\
			\For{$t \in \llbracket 0, T- 1 \rrbracket$}{ \For{$k \in \llbracket
                1, K \rrbracket$}{ Solve the OT problem: $\pi^{(k)}
                \in\underset{\pi \in \Pi(a, b_k)}{\argmin}\
                \sum_{i,j}\pi_{i,j}c_k(x_i^{(t)}, y_{k, j})$; } \For{$i \in
                \llbracket 1, n \rrbracket$}{ Compute $x_i^{(t+1)} =
                B\left((1/a_i)\sum_{j=1}^{n_1}\pi_{i,j}^{(1)}y_{1, j}, \cdots,
                (1/a_i)\sum_{j=1}^{n_K}\pi_{i,j}^{(K)}y_{K, j}\right)$; }
                \If{$\W_2^2(\mu_{t+1}, \mu_t) < \frac{\alpha}{n} \|X^{(t)}\|_2^2$}{
                Declare convergence and terminate. } } \KwRet{$X^{(T)}$}
			\caption{Discrete iteration of $H$.}
			\label{alg:fp_H}
		\end{algorithm}
	\end{minipage}
\end{figure}

We present the iteration of $H$ as a cost-effective alternative to $G$,
which is in some sense a simplification of the Block-Coordinate Descent (BCD)
method, wherein the update with respect to the support $X \in \R^{n\times d}$
with transport plans $(\pi_k)$ fixed is done by computing:
\begin{equation}\label{eqn:BCD_update}
    X^* \in\underset{X \in \R^{n\times d}}{\argmin}\ 
    \sum_{k=1}^K \sum_{i=1}^n \sum_{j=1}^{n_k} \pi_{i,j}^{(k)}
    c_k(x_i, y_{k, j}).
\end{equation}
In practice, apart from the case of the squared Euclidean cost, the optimisation
in \cref{eqn:BCD_update} is not tractable, and one must resort to Gradient
Descent (GD) methods. BCD methods with GD for the update of $X$ can be seen as a
variant of the full GD method which minimises $X \longmapsto V(\frac{1}{n}\sum_i
\delta_{x_i})$, and we leave their study for future work.

\subsection{Correspondence of Gradient Descent with Fixed-Point Iterations}

The fixed-point method of \cite{alvarez2016fixed} applied to Bures-Wasserstein
barycentres also corresponds to a gradient descent algorithm with a specific
step size, as remarked by \cite{altschuler2021averaging}. This also holds for
discrete measures. Indeed, writing $X = \{x_1,\cdots, x_n\}$ and assuming $\mu_X
= \frac 1 n \sum_{i=1}^n \delta_{x_i}$, an alternative to fixed-point iterations
would be to apply a gradient descent directly on the non convex functional
$F:X\longmapsto \sum_{k=1}^K\lambda_k\T_{c_k}(\mu_X, \nu_k)$. For differentiable
costs $c_k$, assuming that $\nu_k = \frac 1 n \sum_{i=1}^n \delta_{y_{k, i}}$,
one step of such a gradient descent writes 
\begin{equation}
    \forall i \in \llbracket 1, n \rrbracket,\; x_i^{(t+1)} = x_i^{(t)} - \alpha  \sum_{k=1}^K \lambda_k \nabla_x c_k(x_i^{(t)},y_{k, \sigma_k^{(t)}(i)}),
    \label{eqn:grad_desc_F}
\end{equation} 
where we choose an element of $\Pi_{c_k}^*(\mu_{X^{(t)}}, \nu_k)$ induced by a
permutation $\sigma_k^{(t)}$ between $\{x_1^{(t)},\cdots, x_n^{(t)}\}$ and
$\{y_{k, 1},\cdots, y_{k, n}\}$. The whole optimisation algorithm consists in
alternating such gradient steps on $X$ with updates of the optimal assignments
$\{\sigma_k^{(t)}\}$, depending on the new point positions. In the fixed-point
approach, this gradient step on each $x_i^{(t)}$ is replaced by the computation
of $B(y_{1, \sigma_1^{(t)}(i)}, \cdots, y_{K,\sigma_K^{(t)}(i)})$, which
corresponds to a full descent on $X$ for a given configuration of assignments
before updating the said assignments (in other words, alternate minimisation).
For generic costs $c_k$, one may also use a gradient descent strategy to compute
barycentres $B(y_{1, \sigma_1^{(t)}(i)},\cdots,y_{K, \sigma_K^{(t)}(i)})$, that
is gradient descents on the $K$ functionals $ x \longmapsto
\sum_{k=1}^K{c_k}(x,y_{k, \sigma_k^{(t)}(i)})$, and such descents write exactly
as \cref{eqn:grad_desc_F}. In this case, the only difference between both
approaches is that the fixed point algorithm applies the whole descent on $X$
before updating assignments, while gradient descent on $F$ alternates steps of
gradient descent on $X$ with updates of the assignments. 

When $c_k = \|\cdot - \cdot\|_2^2$, both approaches are equivalent if the
gradient step is chosen as $\alpha = \frac 1 2$. Indeed, a gradient iteration on
$F$ writes
\[ \forall i \in \llbracket 1, n \rrbracket,\; x_i^{(t+1)} =
(1-2\alpha)x_i^{(t)} + 2\alpha \sum_{k=1}^K \lambda_k y_{k, \sigma_k^{(t)}(i)} =
\sum_{k=1}^K \lambda_k y_{k, \sigma_k^{(t)}(i)} .\] It follows that for $\alpha
= \frac 1 2$, one step of gradient descent computes directly the barycentre for
the current configuration of assignments $\{\sigma_k^{(t)}\}$, which is
precisely one iteration of the fixed-point algorithm. For different cost
functions, similar optimal steps may be formulated, but the step may depend on
$i$ and $x_i^{(t)}$.

Choosing the best strategy between the fixed point approach and the gradient
descent surely depends on the set of costs. When $B$ is easily computable (more
efficiently than by gradient descent), the fixed point algorithm moves the
points faster than gradient descent. However, it is not obvious what should be
the better option for complex costs $c_k$ in practice. More generally, one could
wonder if updating assignments more often (which is the case for the gradient
descent on $F$) might not help avoiding local minima of the whole functional
which is non convex in $X$. We did not observe this behaviour in practice in our
experiments and therefore recommend the fixed point approach as the default
choice.

\subsection{Discrete Uniqueness Discussion}\label{sec:uniqueness_discrete}

In this section, we investigate conditions to have uniqueness in the discrete
Kantorovich problem between measures $\mu = \sum_{i=1}^{n_x}a_i\delta_{x_i} \in
\mathcal{P}(\R^{d_x})$ and $\nu = \sum_{j=1}^{n_y}b_j\delta_{y_j} \in
\mathcal{P}(\R^{d_y})$:
\begin{equation}\label{eqn:discrete_kanto}
    \underset{\pi\in\Pi(a,b)}{\min}\ \sum_{i=1}^{n_x}\sum_{j=1}^{n_y} \pi_{i,j}c(x_i, y_j).
\end{equation}
For convenience, we introduce $X := (x_1, \cdots, x_{n_x}) \in \R^{n_x\times
d_x}$ and $Y := (y_1, \cdots, y_{n_y}) \in \R^{n_y\times d_y}$. The following
result shows that if the cost matrix $M := (X, Y) \longmapsto (c(x_i,
y_j))_{i,j} \in \R^{n_x\times n_y}$ is not orthogonal to a face of the
transportation polytope, then the discrete Kantorovich problem has a unique
solution. For convenience, we write $\pi \cdot M := \sum_{i,j}\pi_{i,j}M_{i,j}$.

\begin{prop}\label{prop:kanto_unique_C_not_pathological} Let $a \in
    \Delta_{n_x}$ and $b \in \Delta_{n_y}$ be fixed weights and $c: \R^{d_x}
    \times \R^{d_y} \longrightarrow \R_+$ a cost function. Consider the cost
    matrix function
    $$M := \app{\R^{n_x\times d_x}\times \R^{n_y \times d_y}}{\R^{n_x\times
    n_y}}{(X,Y)}{(c(x_i, y_j))_{i,j}},$$ and let $(X, Y) \in \R^{n_x\times
    d_x}\times \R^{n_y \times d_y}$. Denote by $\Extr \Pi(a,b)$ the (finite) set
    of extremal points of the transportation polytope $\Pi(a,b)$.
    \begin{equation}\label{eqn:condition_C_not_in_face_orthants}
        \underset{\pi\in\Pi(a,b)}{\min}\ \pi\cdot M(X, Y)\ \text{has\ a\ unique\ solution}\; \Longleftrightarrow \; M(X, Y) \not\in \underset{{\pi_1 \neq \pi_2 \in \Extr \Pi(a,b)}}{\bigcup} (\pi_1 - \pi_2)^\perp.
    \end{equation}
\end{prop}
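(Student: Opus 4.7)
My plan is to view the Kantorovich problem as a linear program over the bounded polytope $\Pi(a,b)$ and apply standard polyhedral facts: (i) $\Pi(a,b)$ has finitely many extreme points; (ii) the minimum of a linear functional on a compact convex polytope is attained on a non-empty face $F^\star$ consisting of all minimizers; (iii) every extreme point of a face of a polytope is an extreme point of the whole polytope. Writing $M := M(X,Y)$ and observing that $M \in (\pi_1-\pi_2)^\perp$ is the same as $\pi_1 \cdot M = \pi_2 \cdot M$, the statement becomes a clean reformulation of the classical LP uniqueness criterion in terms of the distinctness of the objective values on $\Extr \Pi(a,b)$.

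For the ``$\Leftarrow$'' direction, I would assume $M$ lies in none of the hyperplanes $(\pi_1-\pi_2)^\perp$ for $\pi_1 \neq \pi_2 \in \Extr \Pi(a,b)$. Then $\pi \longmapsto \pi \cdot M$ takes pairwise distinct values on the finite set $\Extr \Pi(a,b)$, so there is a unique $\pi^\star \in \Extr\Pi(a,b)$ minimizing the objective. Since every extreme point of $F^\star$ is by fact (iii) an extreme point of $\Pi(a,b)$ realizing the minimum value, and since $\pi^\star$ is the only such point, $F^\star = \{\pi^\star\}$ and uniqueness follows.

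For the ``$\Rightarrow$'' direction, I would argue by contrapositive: if the Kantorovich problem has at least two distinct minimizers, then $F^\star$ is not reduced to a single point, hence as a polytope it contains at least two distinct extreme points $\pi_1 \neq \pi_2$ of itself. By (iii) these are extreme points of $\Pi(a,b)$, and both attain the minimum, so $\pi_1 \cdot M = \pi_2 \cdot M$, i.e., $M \in (\pi_1 - \pi_2)^\perp$, placing $M$ in the union.

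I do not foresee a real obstacle: the entire argument is a one-line reformulation of the LP uniqueness criterion once facts (i)--(iii) are granted. The only point that may merit a brief justification is (iii), which is immediate from the fact that a face of $\Pi(a,b)$ is the intersection of $\Pi(a,b)$ with a supporting hyperplane, so any decomposition of a point of the face as a convex combination of points of $\Pi(a,b)$ must involve points of the face itself. No structure specific to transportation polytopes is used beyond their compactness and polyhedral nature, so the proof transports verbatim to any bounded LP.
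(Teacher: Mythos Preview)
Your $\Leftarrow$ direction is correct and matches the paper's argument. The problem is with your $\Rightarrow$ direction: you announce a contrapositive, but the implication you actually prove is ``non-unique $\Rightarrow$ $M$ lies in the union'', whose contrapositive is ``$M$ not in the union $\Rightarrow$ unique''---that is $\Leftarrow$ again. The genuine contrapositive of $\Rightarrow$ would be ``$M \in (\pi_1-\pi_2)^\perp$ for \emph{some} pair of extreme points $\Rightarrow$ non-unique'', and you never argue this.

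In fact that implication is false, so the stated equivalence does not hold. Take $a=b=(\tfrac13,\tfrac13,\tfrac13)$ and $M=\mathbf{1}\mathbf{1}^\top-I_3$; the extreme points of $\Pi(a,b)$ are the six scaled permutation matrices. The identity is the \emph{unique} minimizer (cost $0$), yet the three transpositions all have the same cost $\tfrac23$, so $M$ lies in several of the hyperplanes $(\pi_1-\pi_2)^\perp$ and hence in the union. The paper's own proof makes the same slip, asserting ``non-uniqueness if and only if there exist $\pi_1\neq\pi_2\in\Extr\Pi(a,b)$ with $\pi_1\cdot M=\pi_2\cdot M$''; only the forward half of that biconditional is justified. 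In short, only the $\Leftarrow$ direction of the proposition is valid, and it is also the only direction the paper uses afterwards (to deduce almost-sure uniqueness when $M\#\Leb$ does not charge hyperplanes).
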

\begin{proof}
    Since $\Pi(a,b)$ is convex and compact in $\R^{n_x\times n_y}$, by the
    Krein-Milman theorem, it is the convex hull of the set of its extreme
    points, denoted $\Extr \Pi(a,b)$. With the definition
    $$\Pi(a,b) = \left\{\pi \in \R^{n_x\times n_y} : \pi \geq 0,\; \pi
    \mathbf{1} = a,\; \pi^T \mathbf{1} = b\right\},$$ we see that $\Pi(a,b)$ is
    a polytope, and thus $\Extr \Pi(a,b)$ is finite. Since the Kantorovich
    problem is a linear problem, the set of optimal solutions is exactly the set
    of convex combinations of optimal extremal points. As a result, we have
    non-uniqueness in \cref{eqn:discrete_kanto} if and only if there exists
    $\pi_1 \neq \pi_2 \in \Extr \Pi(a,b) : \pi_1 \cdot M(X, Y) = \pi_2 \cdot
    M(X, Y)$. We conclude that uniqueness holds if and only if $\forall \pi_1
    \neq \pi_2 \in \Extr \Pi(a,b) : M(X,Y) \not\in (\pi_1 - \pi_2)^\perp.$
\end{proof}
A consequence of \cref{prop:kanto_unique_C_not_pathological} is that if
$M\#\Leb^{n_x\times d_x + n_y\times d_y}$ does not give mass to hyperplanes of
$\R^{n_x\times n_y}$, then the Kantorovich problem has a unique solution for
$\Leb^{n_x\times d_x + n_y\times d_y}$-almost-every $(X, Y)$. Furthermore, if
the measures have the same amount of points $(n_x= n_y)$ and the weights are
uniform, then the extreme points of $\Pi(a, b)$ are permutations, which provides
a theoretical justification for the convenient expression in
\cref{eqn:G_discrete_unif}.

\subsection{Application to Gaussian Mixture Model Barycentres}\label{sec:gmm}

In this section, we explain how our fixed-point algorithm can be applied to
compute barycentres between Gaussian Mixture Models (GMMs), providing a new
numerical method for the GMM barycentre notion introduced in \cite[Section
5]{delon2020wasserstein}. The notation $\PD$ will refer to the cone of positive
definite symmetric $d\times d$ matrices.

We consider the case where the measures are Gaussian Mixture Models, seen as
discrete measures over the space of Gaussian measures on $\R^d$: $\X :=
\mathcal{N} := \left\{\mathcal{N}(m, S) : m \in \R^d,\; S \in \PD\right\}$,
equipped with the 2-Wasserstein distance, which has a specific expression called
the \textit{Bures-Wasserstein distance}:
\begin{equation}\label{eqn:def_bures_distance}
    \W_2^2(\mathcal{N}(m_1, S_1), \mathcal{N}(m_2, S_2)) = \|m_1 - m_2\|_2^2 + \underbrace{\Tr\left(S_1 + S_2 - 2(S_1^{1/2}S_2S_1^{1/2})^{1/2}\right)}_{\bures(S_1, S_2):=}.
\end{equation}
Alternatively, one could see the same problem differently, setting $\X := \R^d
\times \PD$ equipped with the distance defined in \cref{eqn:def_bures_distance}.
To remind the definition of barycentres between Gaussian mixture models from
\cite{delon2020wasserstein}, we will consider measures that lie on the same
space of Gaussian measures: $\X = \Y_1 = \cdots = \Y_K = \mathcal{N}$. Next, we
choose cost functions $c_k$ on $\mathcal{N}$ as the squared Bures-Wasserstein
distance $\W_2^2$ scaled by $\lambda_k$. Given mixture models $\mu,\nu \in
\mathcal{P}(\mathcal{N})$ of the form
$$\mu = \sum_{i=1}^n a_i \delta_{\mathcal{N}(m_i, S_i)},\; \nu = \sum_{j=1}^m
b_j \delta_{\mathcal{N}(m_j', S_j')},$$ the Optimal Transport cost
$\T_{\W_2^2}(\mu, \nu)$ is the value of a discrete problem, which is precisely
the Mixed Wasserstein Distance introduced in \cite[Proposition
4]{delon2020wasserstein}:
\begin{equation}\label{eqn:def_mw2}
    \T_{\W_2^2}(\mu, \nu) = \underset{\pi \in \Pi(a, b)}{\min}\ \sum_{i,j}\pi_{i,j}\W_2^2(\mathcal{N}(m_i, S_i), \mathcal{N}(m_j', S_j')).
\end{equation}
Consider $K$ GMM measures $\nu_k$ written as:
$$\nu_k = \sum_{j=1}^{n_k}b_{k,j}\delta_{\mathcal{N}(m_{k,j}, S_{k,j})} \in
\mathcal{P}(\mathcal{N}),$$ their GMM barycentre cost with weights $(\lambda_k)$
for $\mu = \sum_{i=1}^na_i\delta_{\mathcal{N}(m_i, S_i)} \in
\mathcal{P}(\mathcal{N})$ reads:
\begin{equation}\label{eqn:mw2_bar}
    V(\mu) = \sum_{k=1}^K\lambda_k \underset{\pi_k \in \Pi(a, b_k)}{\min}\ \sum_{i,j} \pi_{i,j}\left(\|m_i - m_{k,j}\|_2^2 + \bures(S_i, S_{k,j})\right).
\end{equation}
We now turn to the expression of the ground barycentre function $B:
\mathcal{N}^K \longrightarrow \mathcal{N}$. This corresponds to a 2-Wasserstein
barycentre problem in the Gaussian case, which was first studied by
\cite{agueh2011barycenter} (showing existence and uniqueness in \cite[Theorem
6.1]{agueh2011barycenter}):
$$B(\mathcal{N}(m_1, S_1), \cdots, \mathcal{N}(m_K, S_K)) = \mathcal{N}(\oll{m},
\oll{S}),\; \oll{m} := \sum_{k=1}^K \lambda_k m_k,\; \oll{S} := \underset{S \in
\PD}{\argmin}\sum_{k=1}^K\lambda_k \bures(S, S_k).$$ A fixed-point formulation
of this problem is presented in \cite{alvarez2016fixed} as a particular case of
their study of the fixed-point algorithm for the ground cost $\|\cdot -
\cdot\|_2^2$ and absolutely continuous measures. This problem is presented again
in \cite{Bhatia_Bures_metric}, were they prove additional convergence
guarantees. We recall from \cite{alvarez2016fixed, Bhatia_Bures_metric} the
fixed-point algorithm to compute the barycentre of $K$ Gaussians
$(\mathcal{N}(m_k, S_k))$ and weights $(\lambda_1, \cdots, \lambda_K)$, which
consists in iterating the function $G_\mathcal{N}: \PD \longrightarrow \PD$:
\begin{equation}\label{eqn:def_iterates_bar_cov}
    G_\mathcal{N}(S) = S^{-1/2}\left(\sum_{k=1}^K\lambda_k (S^{1/2}S_kS^{1/2})^{1/2}\right)^2S^{-1/2}.
\end{equation}
Now that we have defined the ground barycentre map $B$, we can apply our
fixed-point algorithm to compute a barycentre. Given a reference GMM with $n$
components $\mu = \sum_{i=1}^n a_i \delta_{\mathcal{N}(m_i, S_i)}$, for $k \in
\llbracket 1, K \rrbracket$, solve the discrete Kantorovich problem between
$\mu$ and $\nu_k$ (\cref{eqn:def_mw2}) and choose $\pi_k \in \Pi_{\W_2^2}^*(\mu,
\nu_k)$. The GMM of $G(\mu)$ associated to the choice of plans $\pi_k \in \Pi(a,
b_k)$ in the iteration scheme is the GMM $\oll{\mu}$ defined by:
$$\oll{\mu} = \sum_{j_1, \cdots,
j_K}\sum_{i=1}^n\cfrac{1}{a_i^{K-1}}\pi^{(1)}_{i, j_1}\times \cdots \times
\pi^{(K)}_{i, j_K}\delta[B(\mathcal{N}(m_{1, j_1}, S_{1, j_1}), \cdots,
\mathcal{N}(m_{K, j_K}, S_{K, j_K}))].$$ As we argued in
\cref{sec:discrete_expressions}, it is computationally wise to consider a
variant of the fixed-point iterations which use the barycentric projections of
the couplings $\pi_k$ (see \cref{eqn:def_H}). To use this in the case of the
space $\mathcal{N}$, we need to choose a notion of convex combination in
$\mathcal{N}$ to be able to compute the images of the barycentric projections.
The most meaningful choice is a Wasserstein Gaussian barycentre, which
corresponds to using the ground barycentre map $B$ (this time with weights given
by the disintegration of the coupling in question).

\begin{remark}
    The metric space $(\mathcal{N}, \W_2)$ is not compact, however we consider
    discrete measures (GMMs). We will show how one can restrict $\mathcal{N}$ to
    a compact subset containing all barycentres. Combining \cite[Corollary
    3]{delon2020wasserstein} and \cite[Theorem 4.2, Equations 20 and
    21]{alvarez2016fixed}, shows that the barycentre is within a certain compact
    subset of $\mathcal{P}(\mathcal{N})$ of measures supported on Gaussians with
    covariances whose eigenvalues are in a segment $[r, R]$, where $0 < r < R$
    are explicit constants depending on the covariances of the components of
    $\nu_1, \cdots, \nu_K$. As for the means, they can be constrained to the
    convex hull of the means of the components of the mixtures $\nu_k$.
\end{remark}

\FloatBarrier
\newpage
\section{Numerical Illustrations}\label{sec:numerics}
In this section, we provide numerical experiments to illustrate the fixed-point
method (specifically its barycentric variant presented in \cref{alg:fp_H}) on
various toy datasets. All code from this section is available
\href{https://github.com/eloitanguy/ot\_bar}{in our companion Python toolkit}. A
numerical implementation of \cref{alg:fp}, which allows flexible support sizes,
is also possible, but computationally much less appealing than \cref{alg:fp_H}.

\subsection{Toy Example for Barycentre Computation}

We begin with a simple example of barycentre computation in $\R^2$ of two
discrete uniform measures with different support sizes and for the
square-Euclidean cost $c_k(x,y) = \|x-y\|_2^2$. We observe convergence to the
true barycentre in two iterations in \cref{fig:toy_barycentre_iterations}. The
support size increases from 10 to 19 at the first iteration and remains at 19 at
the final iteration.

\begin{figure}[H]
    \centering
    \includegraphics[width=\linewidth]{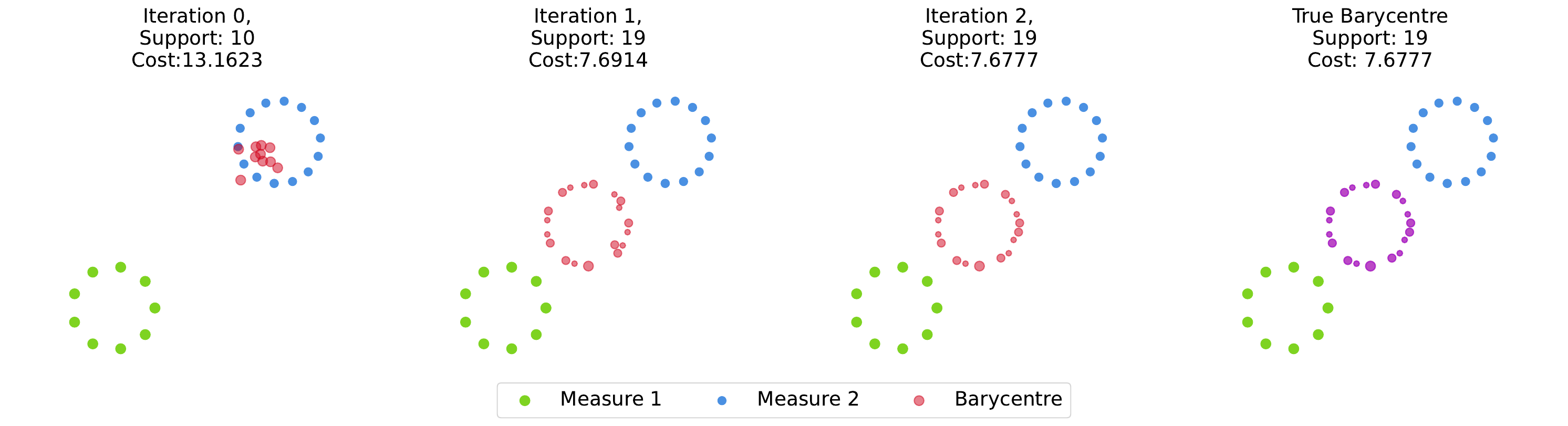}
    \caption{Iterations of \cref{alg:fp} for the square-Euclidean cost, and
    comparison with the true $\W_2^2$ barycentre.}
    \label{fig:toy_barycentre_iterations}
\end{figure}

\subsection{Illustration with Norm Powers}\label{sec:ex_bar_pq}

We consider discrete measures in $\R^2$ for costs $c_k(x,y) = \|x-y\|_p^q$, as
illustrated in \cref{fig:p_norm_q_barycentre}.

\begin{figure}[H]
    \centering
    \begin{adjustbox}{valign=c}
        \begin{subfigure}[b]{0.49\textwidth}
            \centering
            \includegraphics[width=\linewidth]{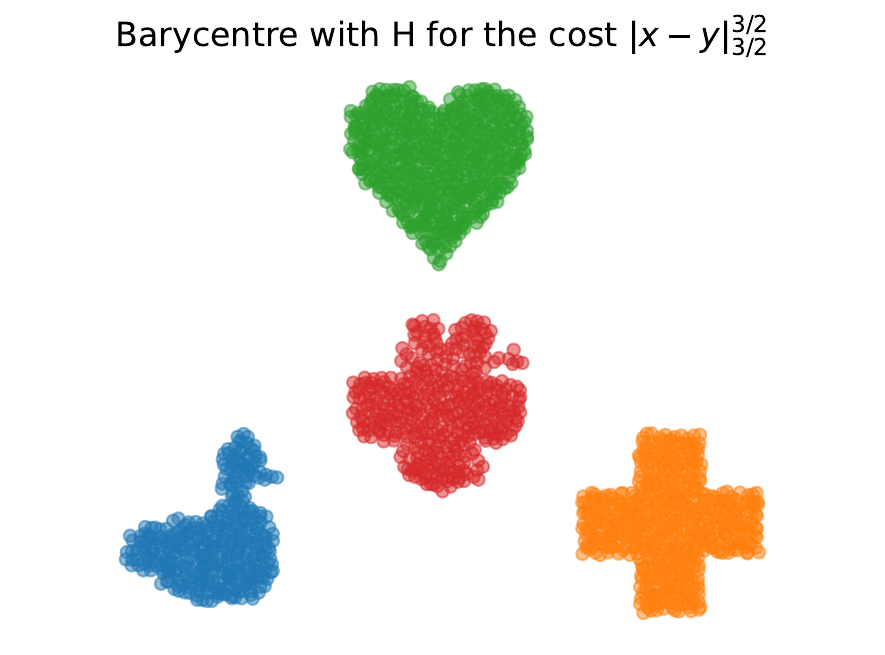}
        \end{subfigure}
    \end{adjustbox}
    \hfill
    \begin{adjustbox}{valign=c}
        \begin{subfigure}[b]{0.49\textwidth}
            \centering
            \includegraphics[width=\linewidth]{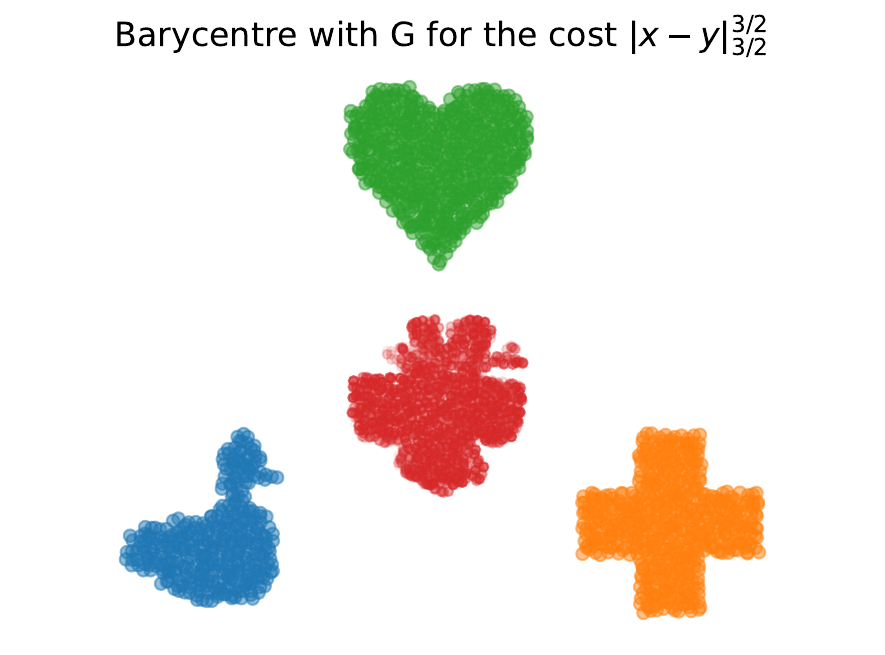}
        \end{subfigure}
    \end{adjustbox}
    \caption{Barycentres with initial support size $n=400$ for $(p, q) =
    (\frac{3}{2}, \frac{3}{2})$ of three measures with sizes 561, 382, 629.}
    \label{fig:p_norm_q_barycentre}
\end{figure}

In \cref{fig:p_norm_q_barycentre_iterations_G}, we observe that for $(p,
q) = (\frac{3}{2}, \frac{3}{2})$, the iterates of $G$ (\cref{alg:fp}) have an
energy that converges in one iteration, but the support size continues to grow
at iteration 2. As for $H$ (\cref{alg:fp_H}), we observe in
\cref{fig:p_norm_q_barycentre_iterations} convergence in one iteration. In
\cref{fig:p_norm_q_barycentres_grid}, we present barycentres for various pairs
$(p, q)$ using iterates of $H$.

\begin{figure}[H]
    \centering
    \begin{adjustbox}{valign=c}
        \begin{subfigure}[b]{0.5\textwidth}
            \centering
            \includegraphics[width=\textwidth]{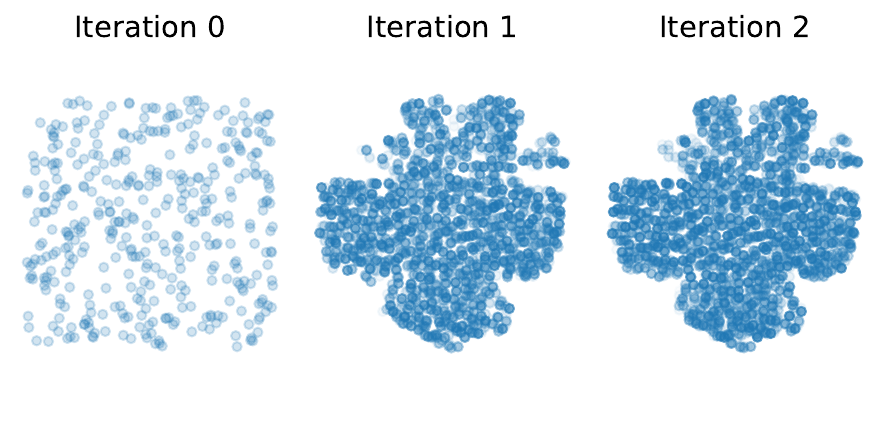}
            \caption{Fixed-point iterations for $(p, q) = (\frac{3}{2},
            \frac{3}{2})$.}
        \end{subfigure}
    \end{adjustbox}
    \hfill
    \begin{adjustbox}{valign=c}
        \begin{subfigure}[b]{0.2\textwidth}
            \centering
            \includegraphics[width=\textwidth]{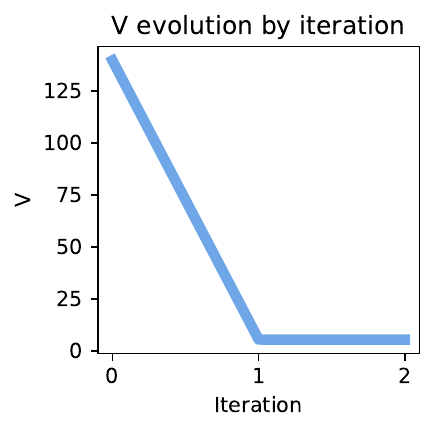}
            \caption{Energy $V$.}
        \end{subfigure}
    \end{adjustbox}
    \hfill
    \begin{adjustbox}{valign=c}
        \begin{subfigure}[b]{0.2\textwidth}
            \centering
            \includegraphics[width=\textwidth]{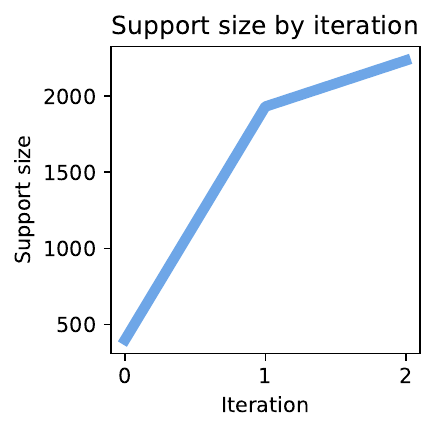}
            \caption{Support size.}
        \end{subfigure}
    \end{adjustbox}
    \caption{Convergence of the iterations of $G$ (\cref{alg:fp}).}
    \label{fig:p_norm_q_barycentre_iterations_G}
\end{figure}

\begin{figure}[H]
    \centering
    \begin{adjustbox}{valign=c}
        \begin{subfigure}[b]{0.6\textwidth}
            \centering
            \includegraphics[width=\textwidth]{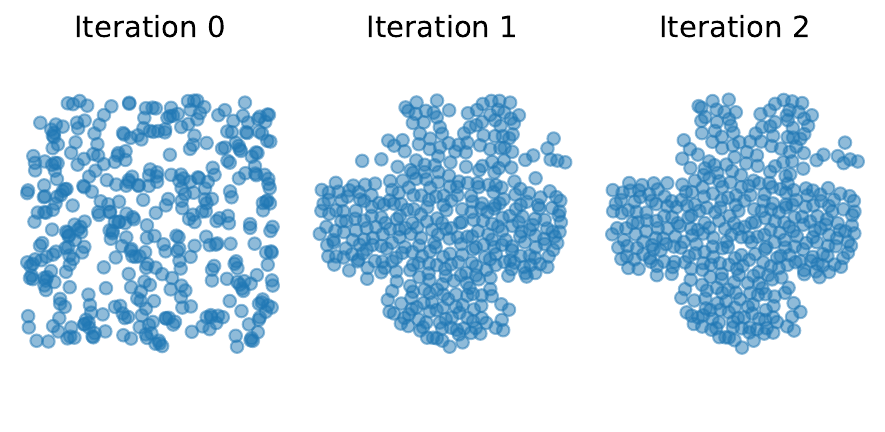}
            \caption{Fixed-point iterations for $(p, q) = (\frac{3}{2},
            \frac{3}{2})$.}
        \end{subfigure}
    \end{adjustbox}
    \hfill
    \begin{adjustbox}{valign=c}
        \begin{subfigure}[b]{0.3\textwidth}
            \centering
            \includegraphics[width=.6\textwidth]{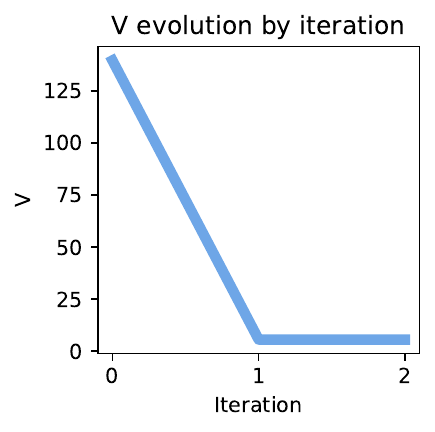}
            \caption{Barycentre energy $V$ of the iterations.}
        \end{subfigure}
    \end{adjustbox}
    \caption{Convergence of the iterations of $H$ (\cref{alg:fp_H}).}
    \label{fig:p_norm_q_barycentre_iterations}
\end{figure}

\begin{figure}[H]
    \centering
    \includegraphics[width=.6\linewidth]{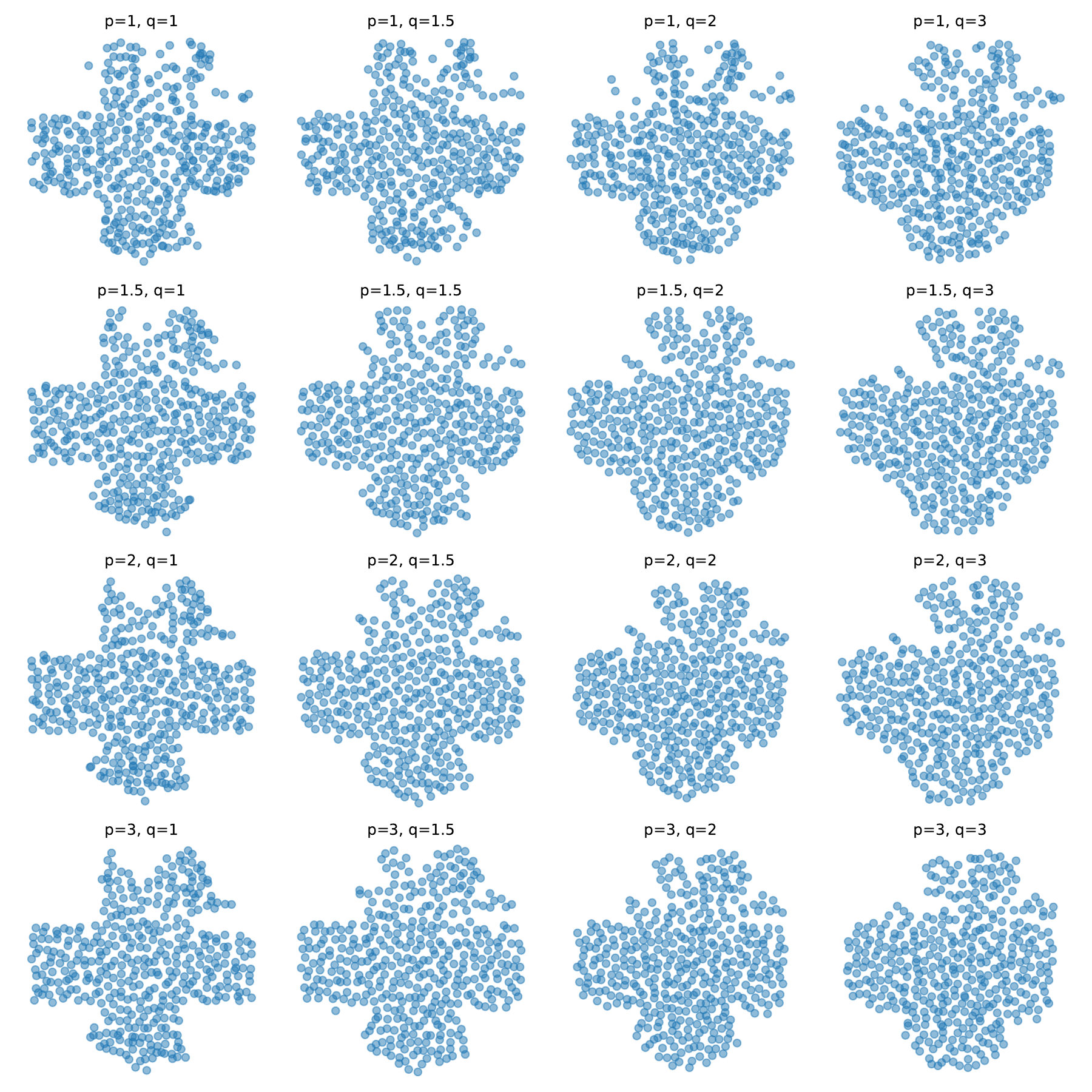}
    \caption{Barycentres for the cost $\|x-y\|_p^q$ for different values of $(p, q)$.}
    \label{fig:p_norm_q_barycentres_grid}
\end{figure}

In the following, we consider a different setting where two of the three
target measures are identical, and with a different third target. This will
allow us to study the robustness properties of the associated barycentre, seeing
the third different measure as an outlier. We represent the target measures and
a barycentre in \cref{fig:bary_point_cloud_2ducks_1heart}, and compare different
barycentres varying the parameters $(p,q)$ of the cost $\|\cdot - \cdot\|_p^q$
in \cref{fig:bary_grid_point_cloud_2ducks_1heart}. We observe that the
barycentre obtained for $q=1$ always takes the shape of the duck, as this power
allows for greater robustness to outliers (here the heart-shaped cloud),
regardless of the norm. The influence of the third point cloud becomes
increasingly evident as $p$ and $q$ grow.

\begin{figure}[H]
    \centering
    \includegraphics[width=0.4\linewidth]{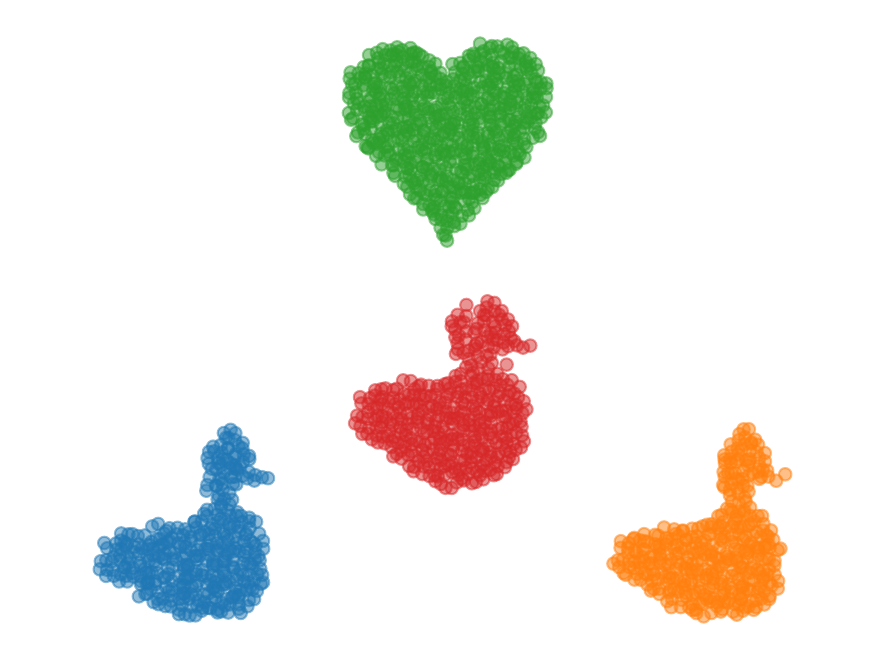}
    \caption{Barycentre of three point clouds for the cost $\|\cdot-\cdot\|_{3/2}^{3/2}$. }
    \label{fig:bary_point_cloud_2ducks_1heart}
\end{figure}

\begin{figure}[H]
    \centering
    \includegraphics[width=.6\linewidth]{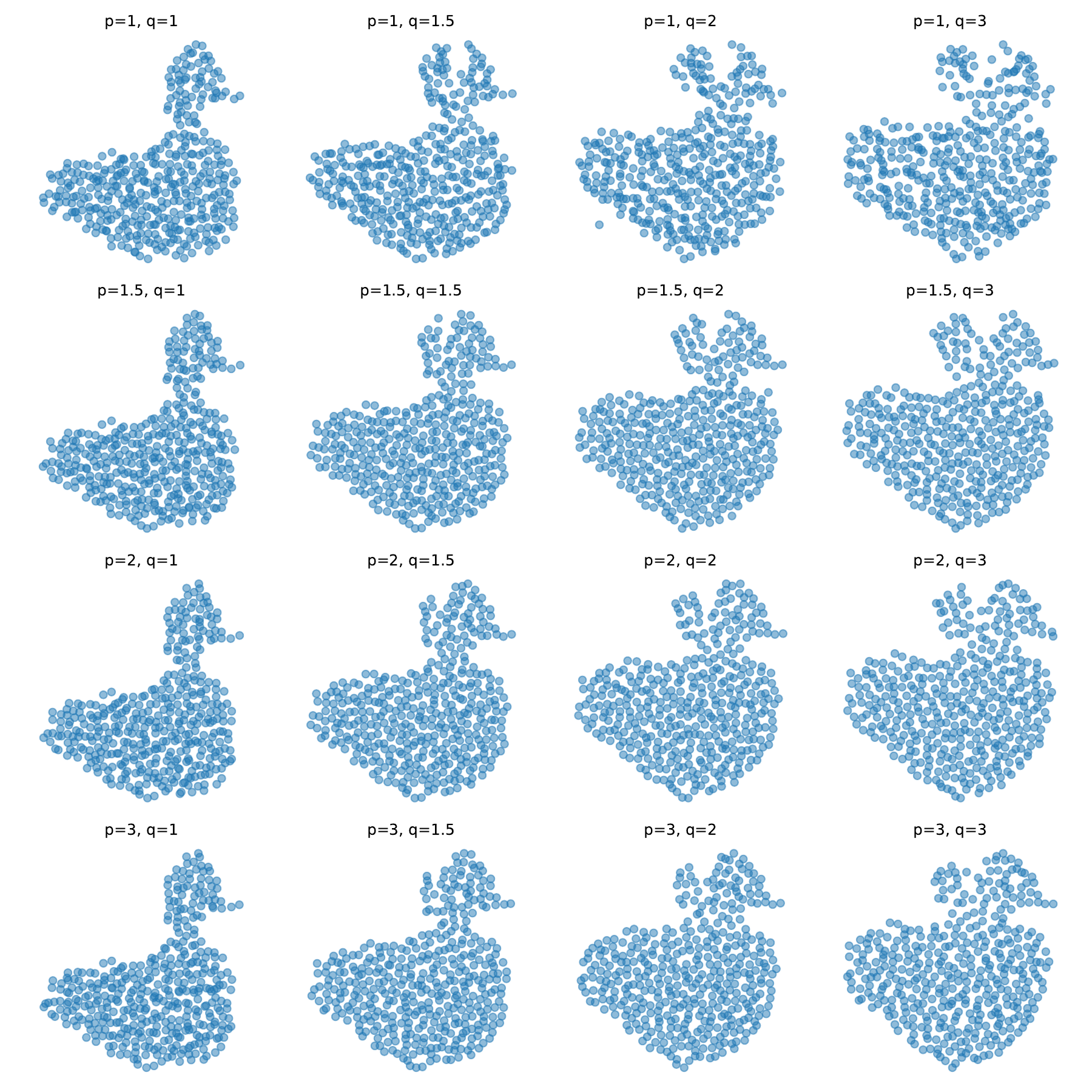}
    \caption{Barycentres for the measures of
        \cref{fig:bary_point_cloud_2ducks_1heart}, for norms $p$ raised to
        the power $q$.}
    \label{fig:bary_grid_point_cloud_2ducks_1heart}
\end{figure}

\FloatBarrier
\subsection{Study of the Support Size of Iterates of \texorpdfstring{$G$}{G}}

In this section, we study the support size $N$ of the final iteration of $G$
(\cref{alg:fp}). As discussed in \cref{sec:discrete_expressions}, we expect
(without formal proof) that the support size after $T$ iterations is
upper-bounded by $N_0 + T\sum_k n_k -TK$, where $N_0$ is the initial support
size and $n_k$ is the size of the $k$-th marginal. We verify this hypothesis on
numerical experiments on numerous configurations varying $N_0, (n_k), d, (d_k)$
with measure points and weights generated randomly and for the square-Euclidean
cost in \cref{fig:xp_G_support_size}. We observe that the upper-bound is indeed
respected, and that the algorithm attains convergence in a small number of
iterations.

\begin{figure}[H]
    \centering
    \begin{adjustbox}{valign=c}
        \begin{subfigure}[b]{0.74\textwidth}
            \centering
            \includegraphics[width=\textwidth]{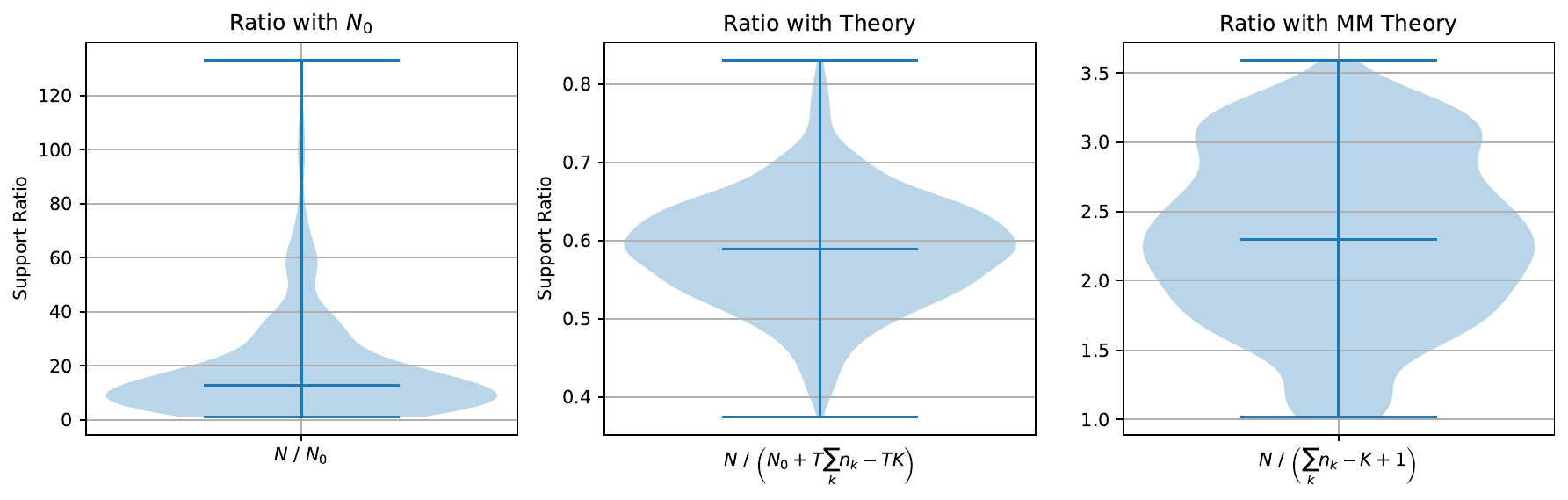}
            \caption{Ratios of the final support size $N$ with various support 
            sizes.}
        \end{subfigure}
    \end{adjustbox}
    \hfill
    \begin{adjustbox}{valign=c}
        \begin{subfigure}[b]{0.24\textwidth}
            \centering
            \includegraphics[width=\textwidth]{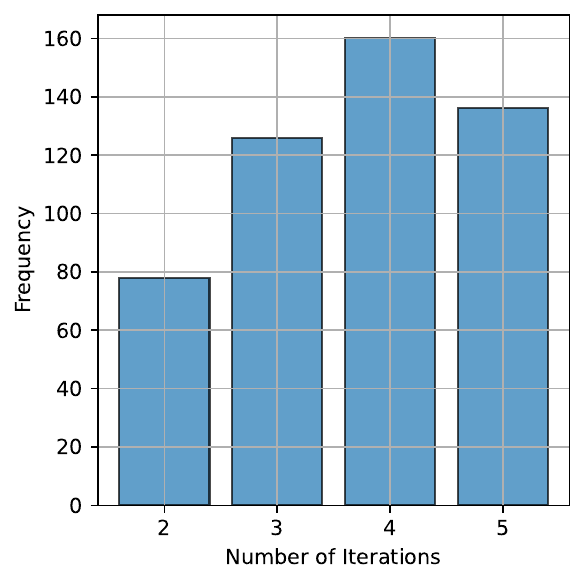}
            \caption{Iterations until convergence.}
        \end{subfigure}
    \end{adjustbox}
    \caption{Numerical study of the support size $N$ of iterates of $G$. We ran
    500 samples, each drawing a random number of measures $K \in \llbracket 2,
    10 \rrbracket$, random dimensions $d, d_1, \cdots, d_K \in \llbracket 1, 20
    \rrbracket$, random initial support sizes $N_0 \in \llbracket 10, 100
    \rrbracket$ and random measure sizes $n_k \in \llbracket 10, 100
    \rrbracket$.}
    \label{fig:xp_G_support_size}
\end{figure}

Running the same experiment as in \cref{fig:xp_G_support_size} with $N_0 = n_1 =
\cdots = n_K$ and uniform measure weights, we obtain, as expected in
\cref{eqn:G_discrete_unif} that the support size $N_t$ remains constant.

\subsection{Comparison with the Multi-Marginal Formulation}

Following \cref{eqn:multi_marginal_bar_equivalence}, the discrete OT barycentre
problem has a multi-marginal formulation, which can be written as follows, given
measures $\nu_k = \sum_{j=1}^{m_k} b_{k,j}\delta_{y_{k, j}}$:
\begin{equation}\label{eqn:discrete_mm}
    \underset{\pi \in \Pi(b_1, \cdots, b_K)}{\argmin}\ \sum_{j_1, \cdots, j_K} \pi_{j_1, \cdots, j_K}\sum_{k=1}^K c_k(B(y_{1, j_1}, \cdots, y_{K, j_K}), y_{k, j_k}).
\end{equation}
Numerical solvers for \cref{eqn:discrete_mm}, while slow, allow the computation
of the exact solution of the barycentre problem. Comparing this solution to the
output of our algorithm is technical, since the barycentric version of our
algorithm imposes the size of the support of the barycentre in addition to
imposing the weights, which introduces bias. We aim to illustrate that the speed
of the barycentric algorithm, with a quantitative study of the error with
respect to the multi-marginal ``ground truth''. Note that even in this
square-euclidean experiment, there is no widespread multi-marginal solver, which
is why we also contribute an implementation.

The experimental setup is the following: the $K$ measures $\nu_k$ are all
uniform measures with $n$ points in $\R^d$ drawn independently from
$\mathcal{N}(0, 1)$. For the fixed-point algorithm, the initial measure is also
taken as a uniform measure over $n$ points with $\mathcal{N}(0, 1)$ samples. We
compare different numbers of iterations of the fixed-point algorithm and
different choices of $n, d, K$. The plots show the ratios of the energy $V$ and
computation times for our algorithm divided by a Linear Programming
multi-marginal solver, plotting 30\% and 70\% quantiles across 10 samples for
each configuration. As expected in \cref{eqn:G_discrete_unif}, since in
this case the measures are uniform with a common support size, the iterates of
$H$ and $G$ are identical in this setting.

\begin{figure}[H]
    \centering
    \includegraphics[width=.9\linewidth]{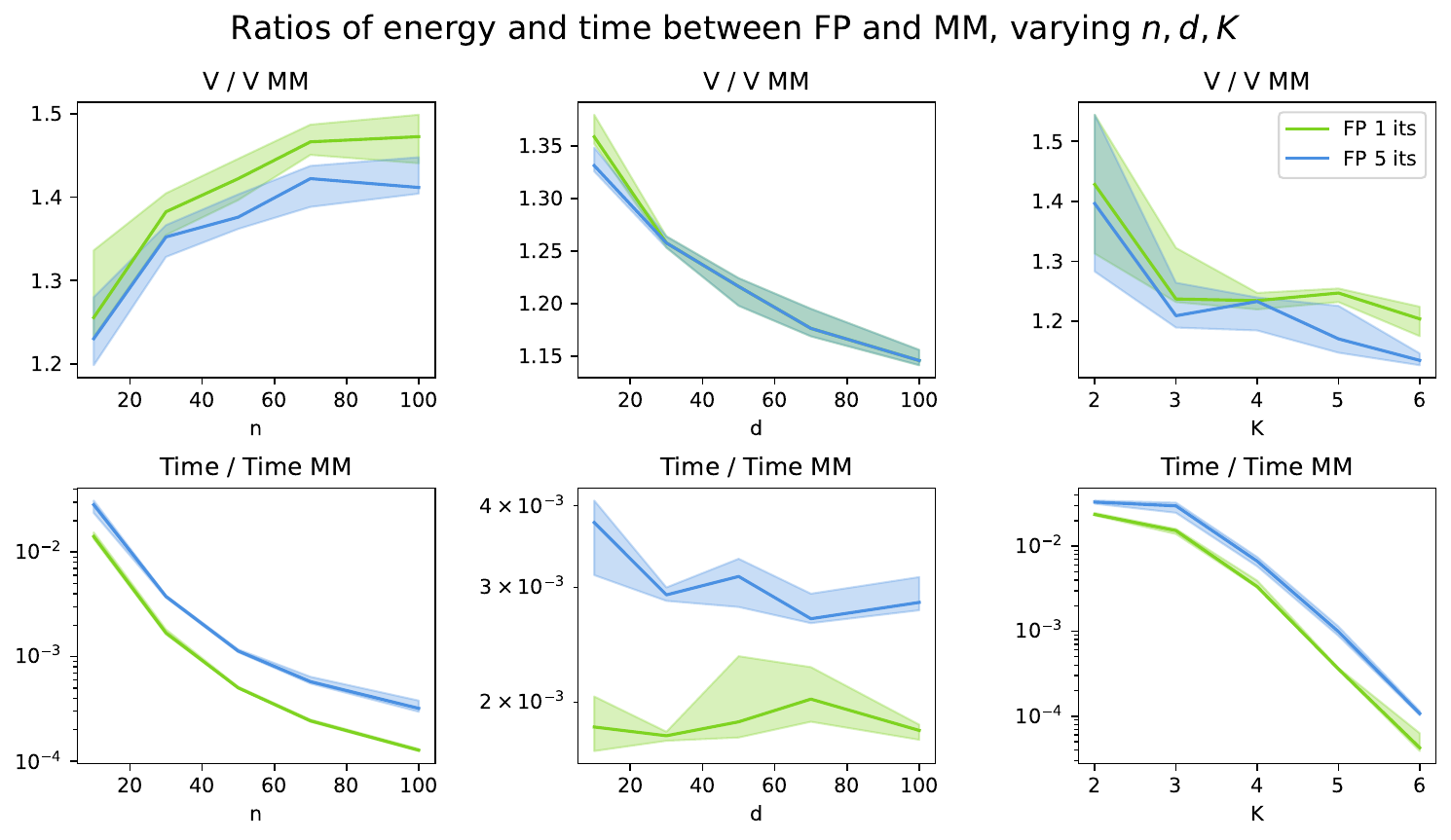}
    \caption{Comparing the fixed-point solver with a linear programming 
    multi-marginal solver. From left to right columns: varying $n$ with $d=10$ 
    and $K=3$; varying $d$ with $n=30$ and $K=3$; varying $K$ with $n=10$ and 
    $d=10$. The comparison is made by dividing the energy value $V$ (resp. 
    computation time) of the fixed-point solution by the multi-marginal 
    solution. The different curves correspond to $T = 1, 5, 10$ iterations 
    (legend in the top-right).}
    \label{fig:fp_vs_mm}
\end{figure}

From the results presented in \cref{fig:fp_vs_mm}, it appears that the
fixed-point algorithm converges in very few iterations, has an energy at most
50\% worse than the exact multi-marginal solution, and is orders of magnitude
faster, especially for larger measure sizes $n$ and for greater numbers of
marginals $K$. Note that for $n \geq 10$ and $K \geq 10$ for example, the
multi-marginal problem is computationally intractable.

To compare with similar barycentre support sizes, in \cref{fig:fp_vs_mm_N} we
experiment with fixed-point barycentres using $H$ (\cref{alg:fp_H}) with
$N_{\mathrm{FP}} = (n-1)K + 1$ points. The rationale behind this choice stems
from the fact that discrete measures with $n_1, \cdots, n_K$ points have a
barycentre with at most $\sum_k n_k - K + 1$ points (\cite[Theorem
2]{anderes2016discrete}\footnote{whose techniques are in fact not specific to
the cost $\|\cdot - \cdot\|_2^2$}).

\begin{figure}[H]
    \centering
    \includegraphics[width=.9\linewidth]{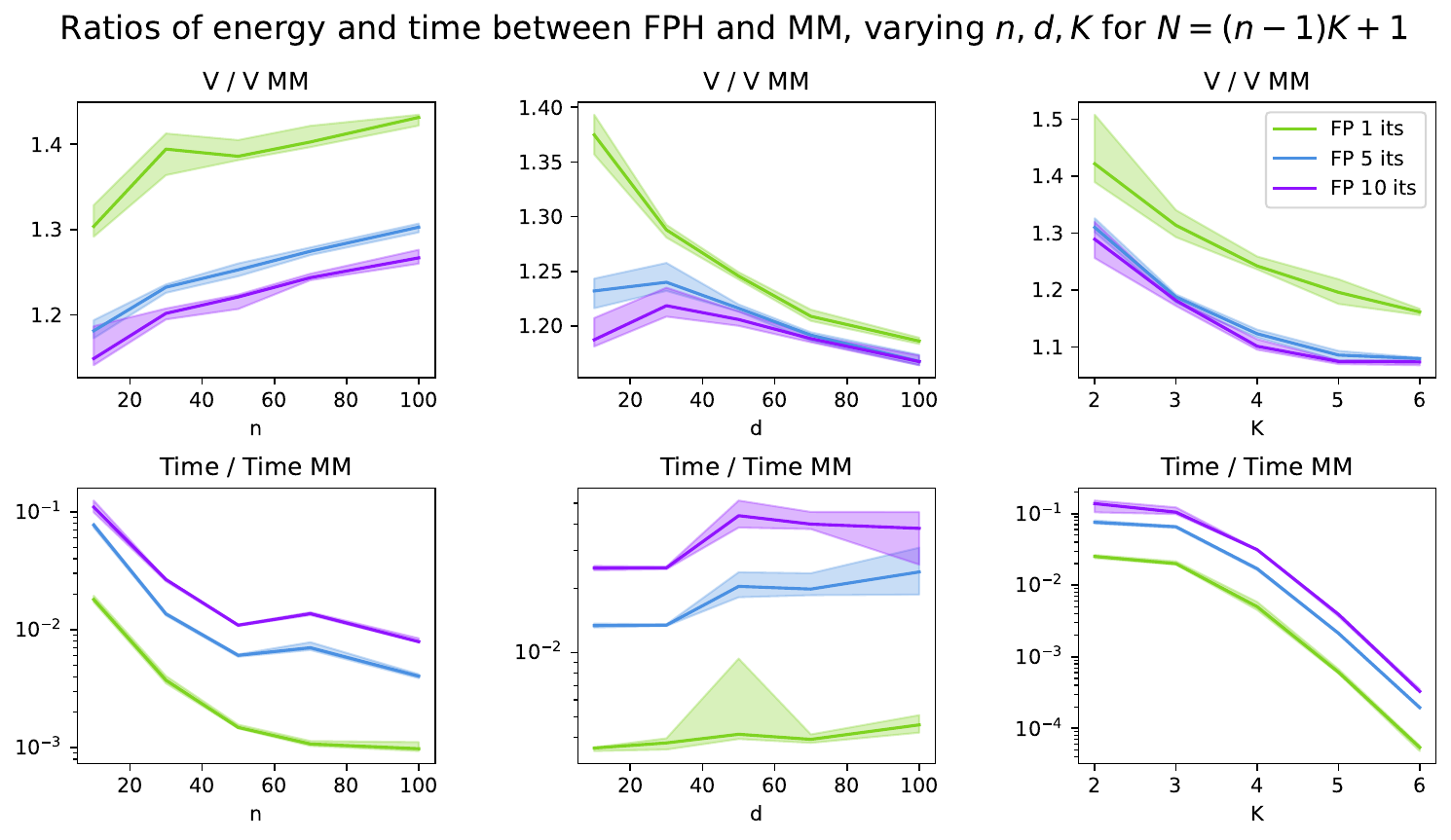}
    \caption{Comparing the fixed-point solver from \cref{alg:fp_H} for
    $N_{\mathrm{FP}} = (n-1)K + 1$ and the same setup as in
    \cref{fig:fp_vs_mm}.}
    \label{fig:fp_vs_mm_N}
\end{figure}

We now focus on the iterations of $G$ (\cref{alg:fp}) in the case of
uniform measures where the initialisation is taken with $n$ points and the
target measures have even spaced sizes $n_1 = \frac{n}{2} \cdots n_K = 2n$. This
ensures that iterates of $G$ differ from iterates of $H$, and we present the
results in \cref{fig:fpG_vs_mm}.

\begin{figure}[H]
    \centering
    \includegraphics[width=.9\linewidth]{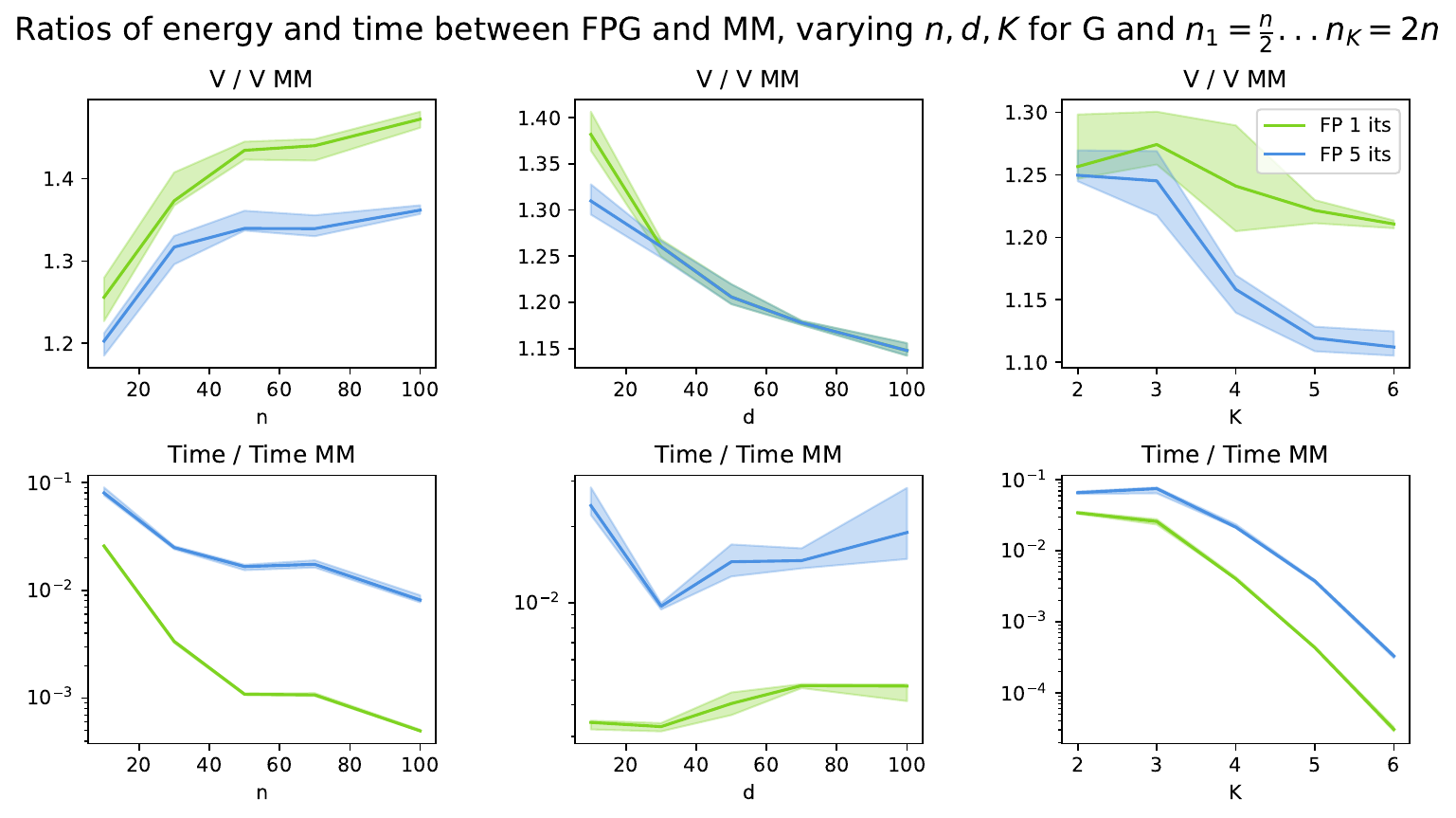}
    \caption{Comparing the fixed-point solver from \cref{alg:fp} with the MM
    solver, for an initialisation with $n$ points and target measures with
    different sizes $n_1 = \frac{n}{2} \cdots n_K = 2n$.}
    \label{fig:fpG_vs_mm}
\end{figure}

\cref{fig:fp_vs_mm,fig:fp_vs_mm_N,fig:fpG_vs_mm} suggests that the
fixed-point methods proposed in \cref{alg:fp,alg:fp_H}a re useful as a fast
approximate solvers for the barycentre problem, and that settings with larger
barycentre supports may require more iterations to converge. The main takeaway
is that our methods remain competitive for large supports and number of target
measures, yet its convergence speed and overall advantages are more pronounced
for smaller supports.

\subsection{Generalised Wasserstein Barycentre Computation}

In \cref{fig:gwb_W1_3D}, we illustrate the case where $c_k(x, y) = \|P_k x -
y\|_2$, where $P_k: \R^3 \longrightarrow \R^2$ is an orthogonal projection. The
problem finds a 3D measure whose projections attempt to match the reference 2D
measures, which we compare in \cref{fig:gwb_W1_proj}. This is a modification of
the exponent 2 from Generalised Wasserstein Barycentres
\cite{delon2021generalized}. 

\begin{figure}[H]
    \begin{adjustbox}{valign=c}
        \begin{subfigure}[b]{0.4\textwidth}
            \centering
            \includegraphics[width=\linewidth]{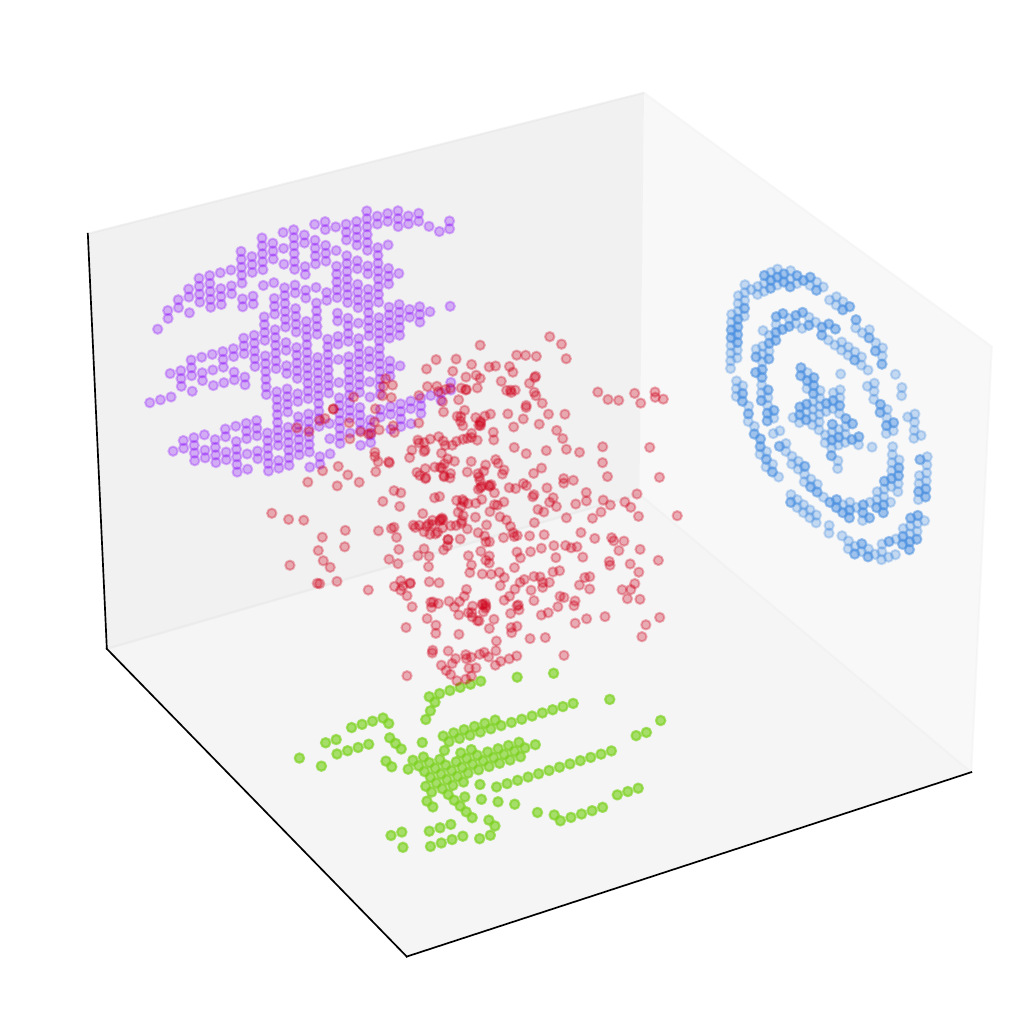}
            \caption{Barycentre in $\R^3$ with immersed measures $\nu_k \in
            \mathcal{P}(\R^2)$. }
            \label{fig:gwb_W1_3D}
        \end{subfigure}
    \end{adjustbox}
    \begin{adjustbox}{valign=c}
    \begin{subfigure}[b]{0.6\textwidth}
        \centering
        \includegraphics[width=\linewidth]{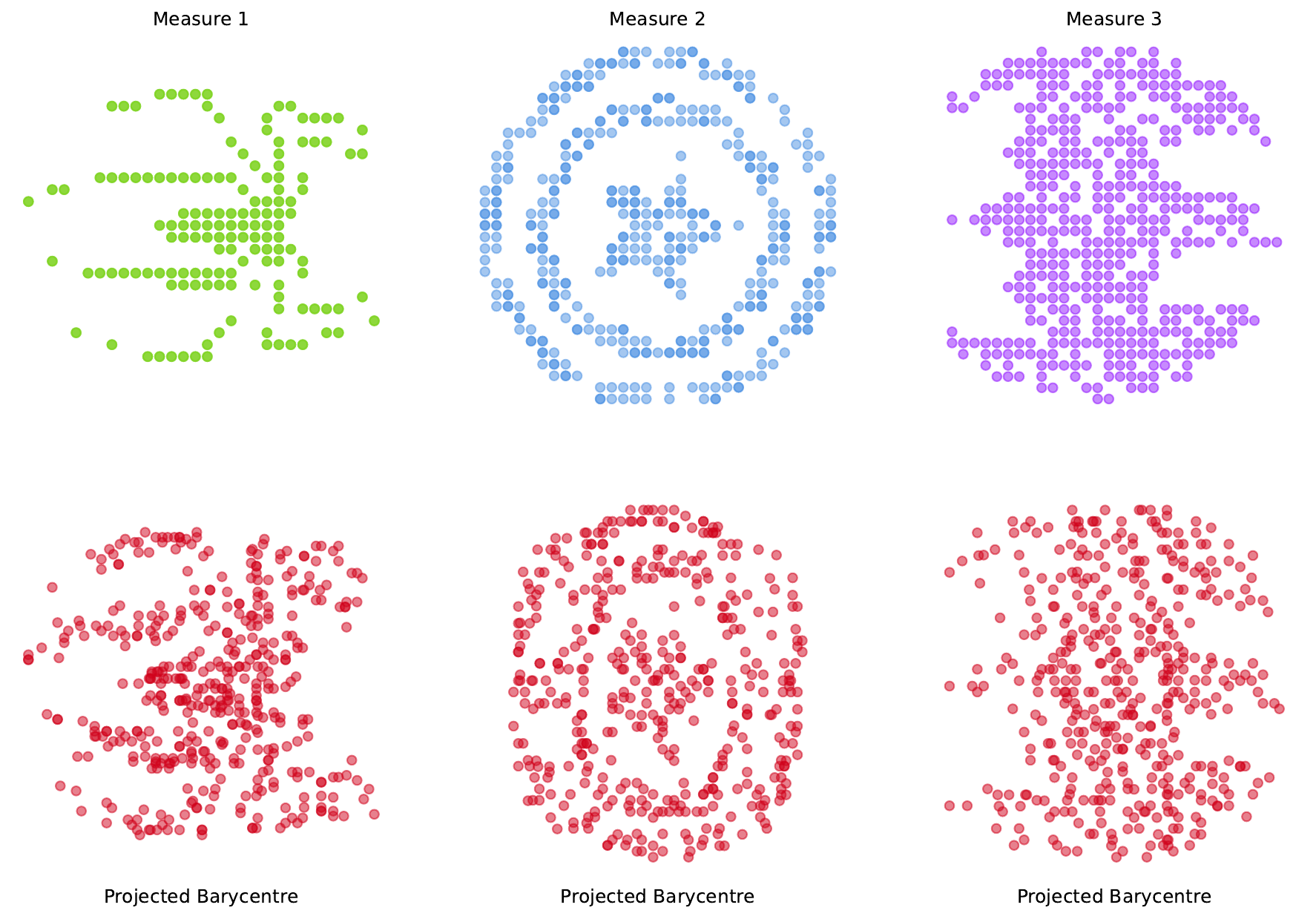}
    \caption{Projections of the barycentre into $\R^2$.}
        \label{fig:gwb_W1_proj}
    \end{subfigure}
    \end{adjustbox}
    \caption{Barycenter (using \cref{alg:fp_H}) with costs $c_k(x,y) = \|P_k
    x-y\|_2$, where $P_k$ are orthogonal projections from $\R^3$ to the three
    axes-aligned planes of the orthonormal basis. We provide an animation
    \href{https://github.com/eloitanguy/ot_bar/blob/main/examples/2d_proj_W1/barycenter_rotation.gif}{in
    the companion code}.}
\end{figure}

\subsection{Non-linear Generalised Wasserstein Barycentre Computation}

In this illustration, we look for a barycentre in $\R^2$ whose projections onto
different circles match measures on these circles. We choose the costs $c_k(x,
y) = \|P_k(x) - y\|_2^2$, where $P_k$ is the projection onto the circle $k$.
Since $P_k$ is not linear, this is a direct generalisation of
\cite{delon2021generalized}.

\begin{figure}[H]
    \centering
    \includegraphics[width=\linewidth]{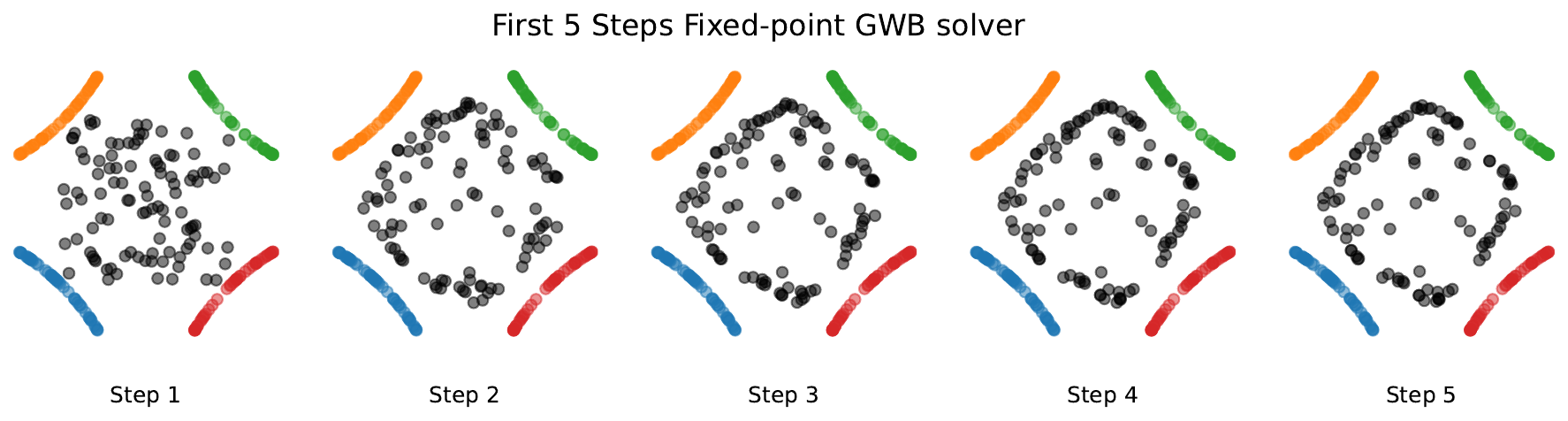}
    \caption{First 5 iterations of the fixed-point algorithm (\cref{alg:fp_H})
    for costs $c_k(x, y) = \|P_k(x) - y\|_2^2$, where $P_k$ are projections onto
    four different circles on which the $\nu_k$ are supported (plotted in
    colour). }
    \label{fig:circles_5_steps}
\end{figure}

In this instance, convergence happens quickly, but a stationary point is only
reached after about 5 iterations, as observed on the steps in
\cref{fig:circles_5_steps} and on the energy curve in \cref{fig:circles_V}.

\begin{figure}[H]
    \centering
    \includegraphics[width=.4\linewidth]{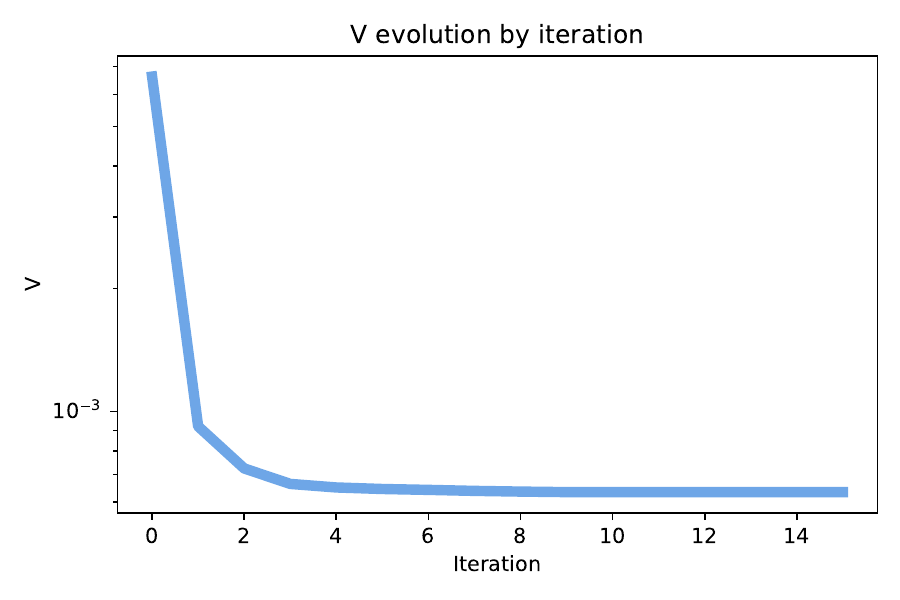}
    \caption{Barycentre energy $V$ of the fixed-point algorithm for $H$ across iterations.}
    \label{fig:circles_V}
\end{figure}

\subsection{Gaussian Mixture Model Barycentres}

We illustrate numerical solutions of the GMM Barycentre method introduced in
\cref{sec:gmm}. In \cref{fig:gmm_barycenters_comparison}, we compare the
multi-marginal solution with the output of our algorithm (we use
\cref{alg:fp_H}). 

\begin{figure}[H]
    \centering
    \includegraphics[width=.75\linewidth]{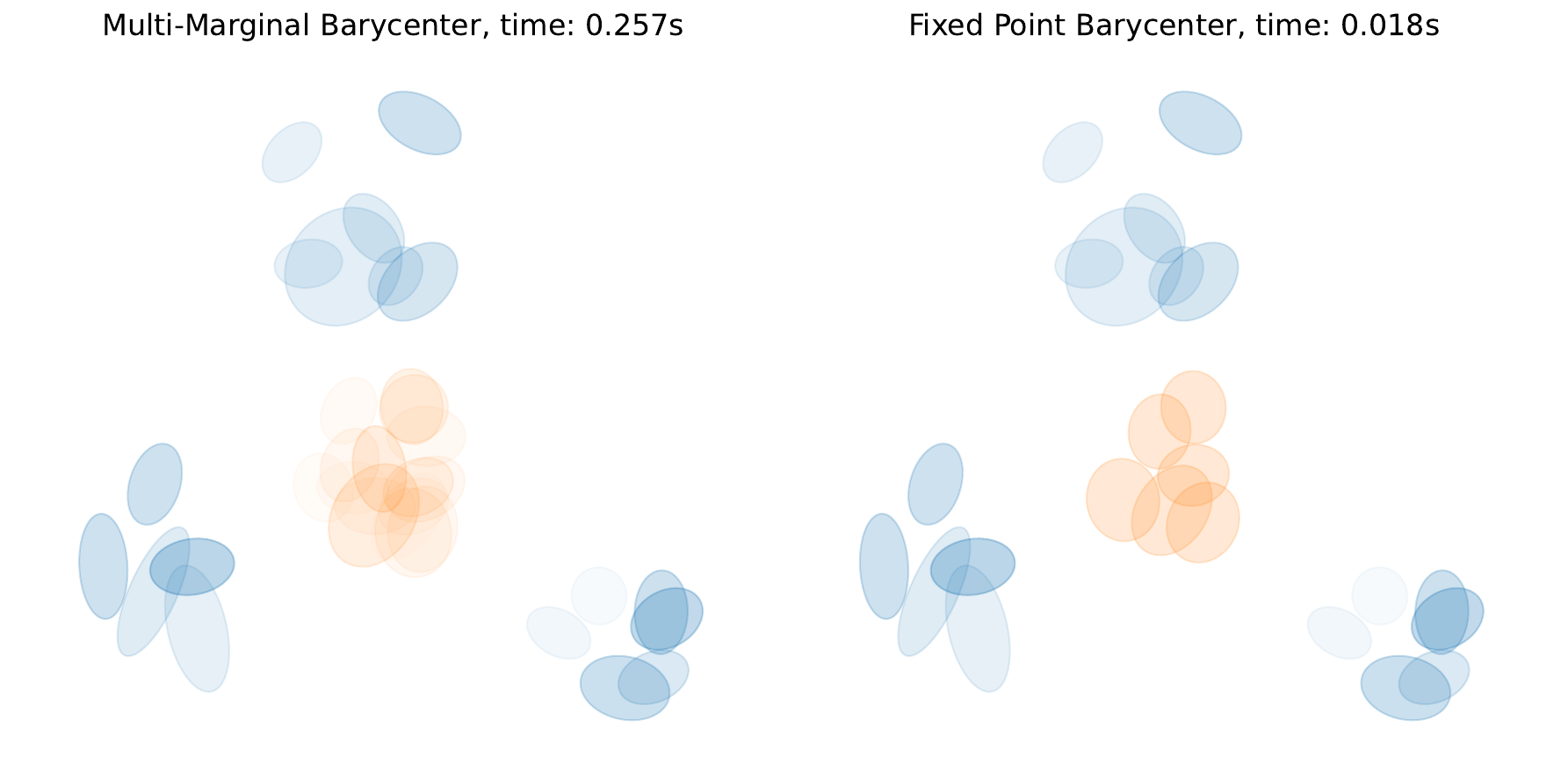}
    \caption{Left: multi-marginal solution for the GMM barycentre problem. Right: fixed-point solution for $n=6$ components.}
    \label{fig:gmm_barycenters_comparison}
\end{figure}

Finally, in \cref{fig:gmm_barycenter_interpolation} we illustrate barycentres
between 4 GMMs shown in \cref{fig:gmms} with different weights.

\begin{figure}[ht]
    \centering
    \includegraphics[width=.75\linewidth]{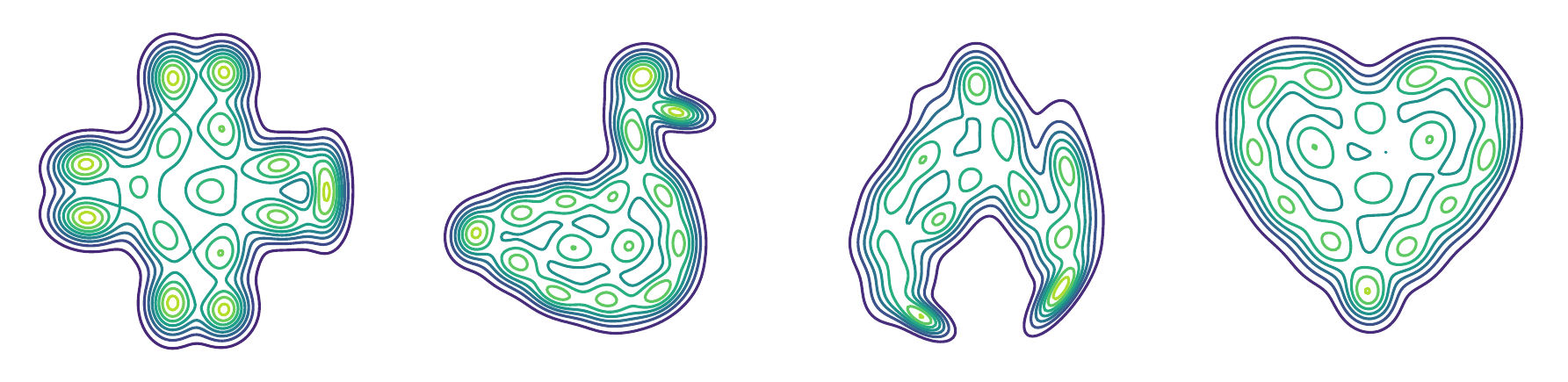}
    \caption{Four GMMs of which we will compute barycentres in
    \cref{fig:gmm_barycenter_interpolation}.}
    \label{fig:gmms}
\end{figure}

\begin{figure}[ht]
    \centering
    \includegraphics[width=.5\linewidth]{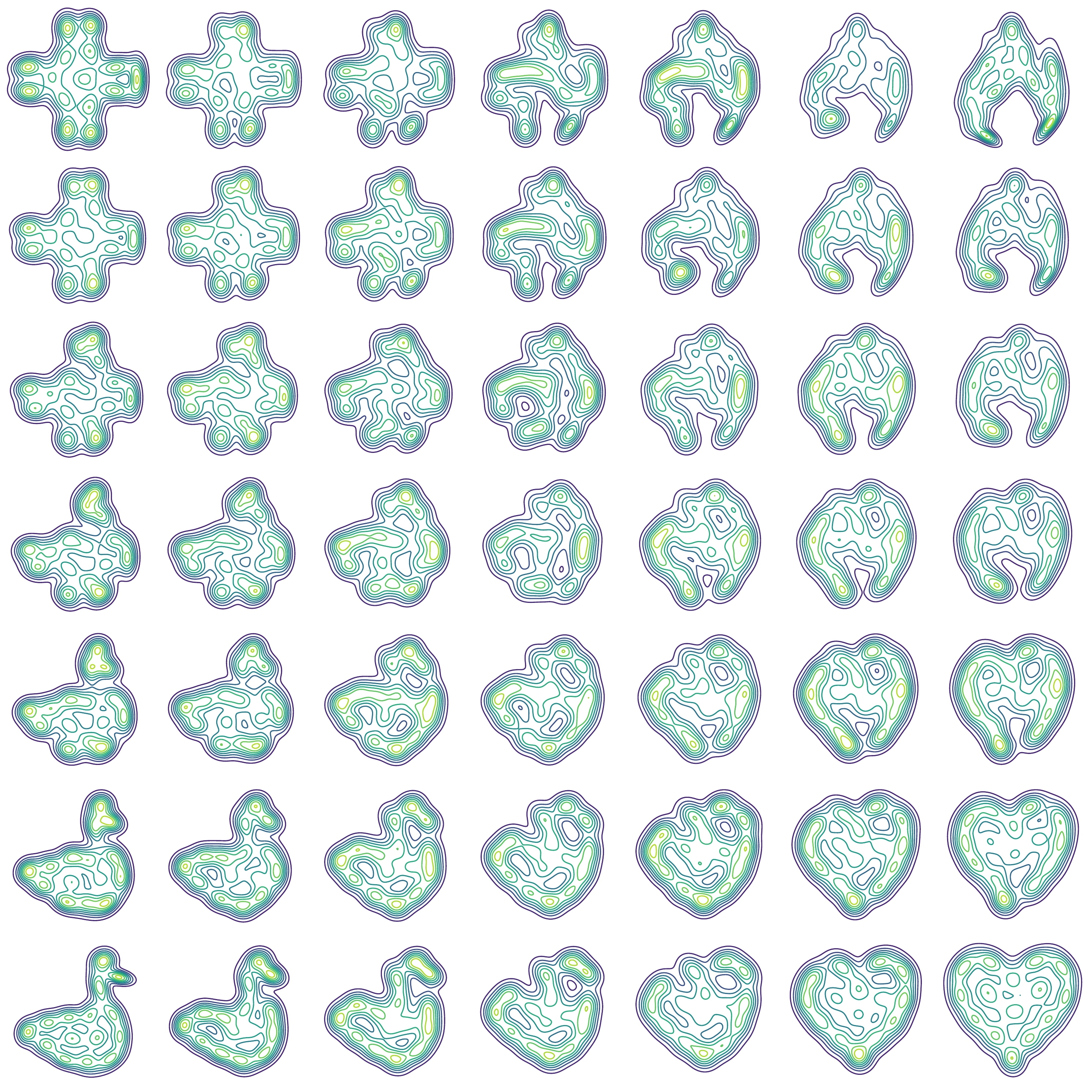}
    \caption{GMM barycentres between the four corner GMMs computed with the fixed-point solver with $n=15$ components. The GMMs are represented by the contours of their densities on $\R^2$.}
    \label{fig:gmm_barycenter_interpolation}
\end{figure}

\subsection{Colour Transfer on a Barycentre of Colour Distributions}\label{sec:ex_colour_transfer}

In this final experiment, we consider a colour transfer problem. The goal is to
compute the barycentre of the colour distributions of several (here three)
source images, some of which contain outlier colours, and then use this
barycentre as a target measure to modify the colours of a new image (referred to
as the \textit{input} here). \cref{fig:bar_W1_W2_matching} shows the
source images, the \textit{input} image, and the same \textit{input} image after
transferring its colour distribution to that of the colour barycentre of the
source images. The barycentre is computed either for a $\W_1$ cost or for a
$\W_2^2$ cost. This transfer is evaluated on downsampled images, with the RGB
matching of a colour $c$ in the high-resolution image subsequently chosen as
$c+\tau$, where $\tau$ is the colour translation obtained for the closest colour
to $c$ in the downsampled image (this amounts to viewing the matching as a
piecewise constant translation field). \cref{fig:bar_W1_W2_matching_rgb}
shows the colour distributions of the images in the RGB space. We observe that
the $\W_1$ cost enjoys greater robustness to the colour outliers compared to the
usual $\W_2^2$ cost.

\blue{
Mathematically, we view each high-resolution image $I_k$ as a discrete measure
$\nu_k = \frac{1}{N}\sum_{j=1}^{N}\delta_{y_{k,j}} \in \mathcal{P}(\R^3)$ which
represents the point cloud of its pixel colours $(y_{k,j})_j$. We then
downsample into $\hat\nu_k = \frac{1}{n}\sum_{j=1}^{n}\delta_{\hat y_{k,j}}$
(typically by subsampling). A $\W_p$ ($p\in\{1,2\}$) barycentre $\hat\mu =
\frac{1}{n}\sum_{i=1}^n \delta_{\hat x_i}$ is then computed between the measures
$(\hat\nu_k)$ using \cref{alg:fp_H}. We now apply the colour distribution of
this barycentre $\hat\mu$ to a new input image $J$ represented as $\rho =
\frac{1}{N}\sum_{i=1}^N \delta_{z_i}$ and downsampled to $\hat\rho =
\frac{1}{n}\sum_{i=1}^n \delta_{\hat z_i}$. To do this, we compute an OT
permutation $\sigma$ from $\hat\rho$ to $\hat\mu$ and output the image
$\tilde{J} = (\tilde{z}_i)_{i=1}^N$ defined by: 
\[
\forall i \in \llbracket 1, N \rrbracket,\; 
\tilde{z}_i := z_i + (\hat x_{\sigma(\hat i)} - \hat z_{\hat i}),\quad
\hat i := \underset{j \in \llbracket 1, n \rrbracket}{\argmin}\ \|z_i - \hat z_j\|_2^2.
\]
}

\begin{figure}[ht]
    \centering
    \includegraphics[width=.75\linewidth]{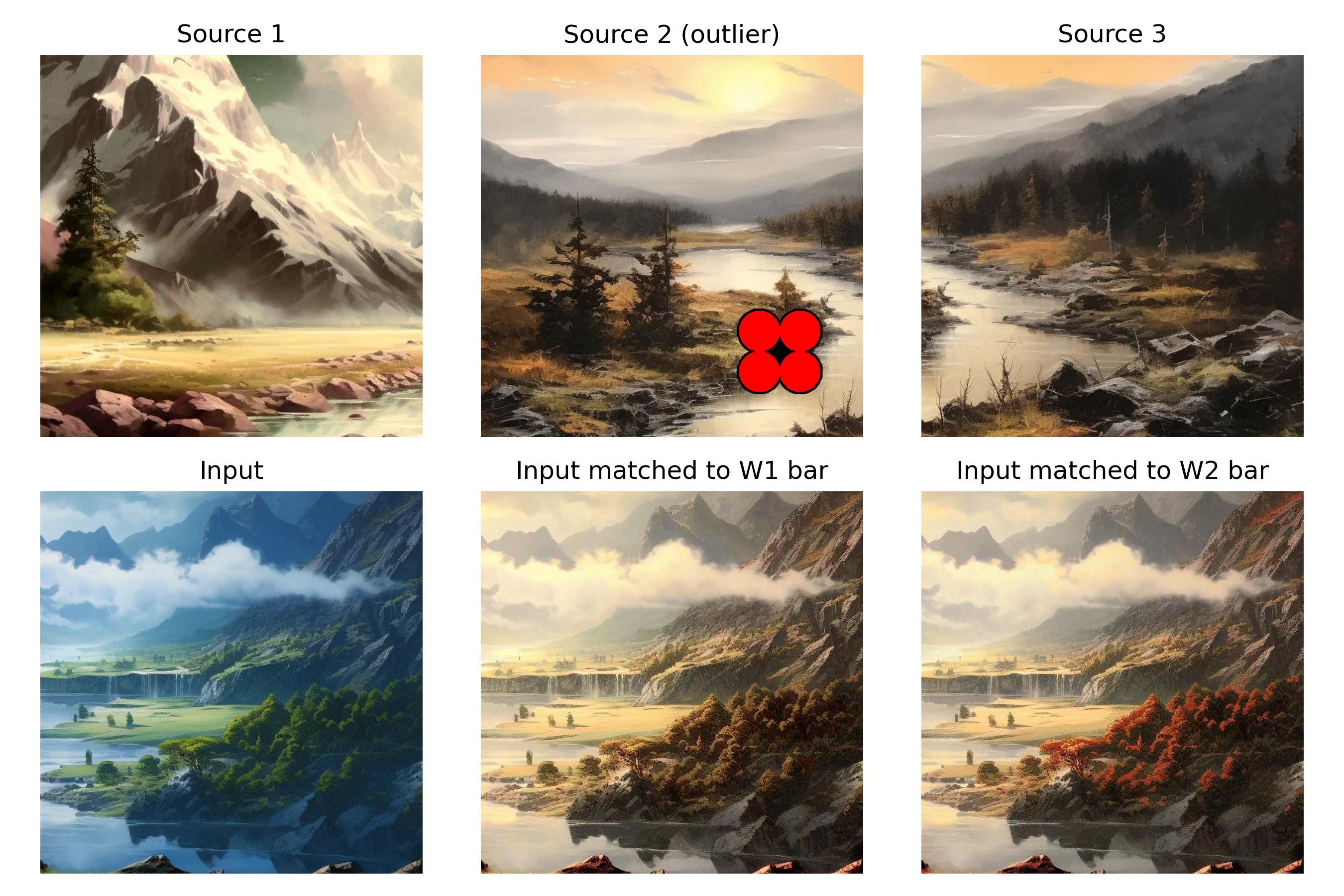}
    \caption{Colour transfer applied to the input image towards the colour
    barycentre of the source images, for the costs $\W_1$ and $\W_2^2$. One of
    the source images contains unwanted colour artifacts, which we see as
    outliers.}
    \label{fig:bar_W1_W2_matching}
\end{figure}

\begin{figure}[ht]
    \centering
    \includegraphics[trim= 0 5em 0 0, clip, width=.75\linewidth]{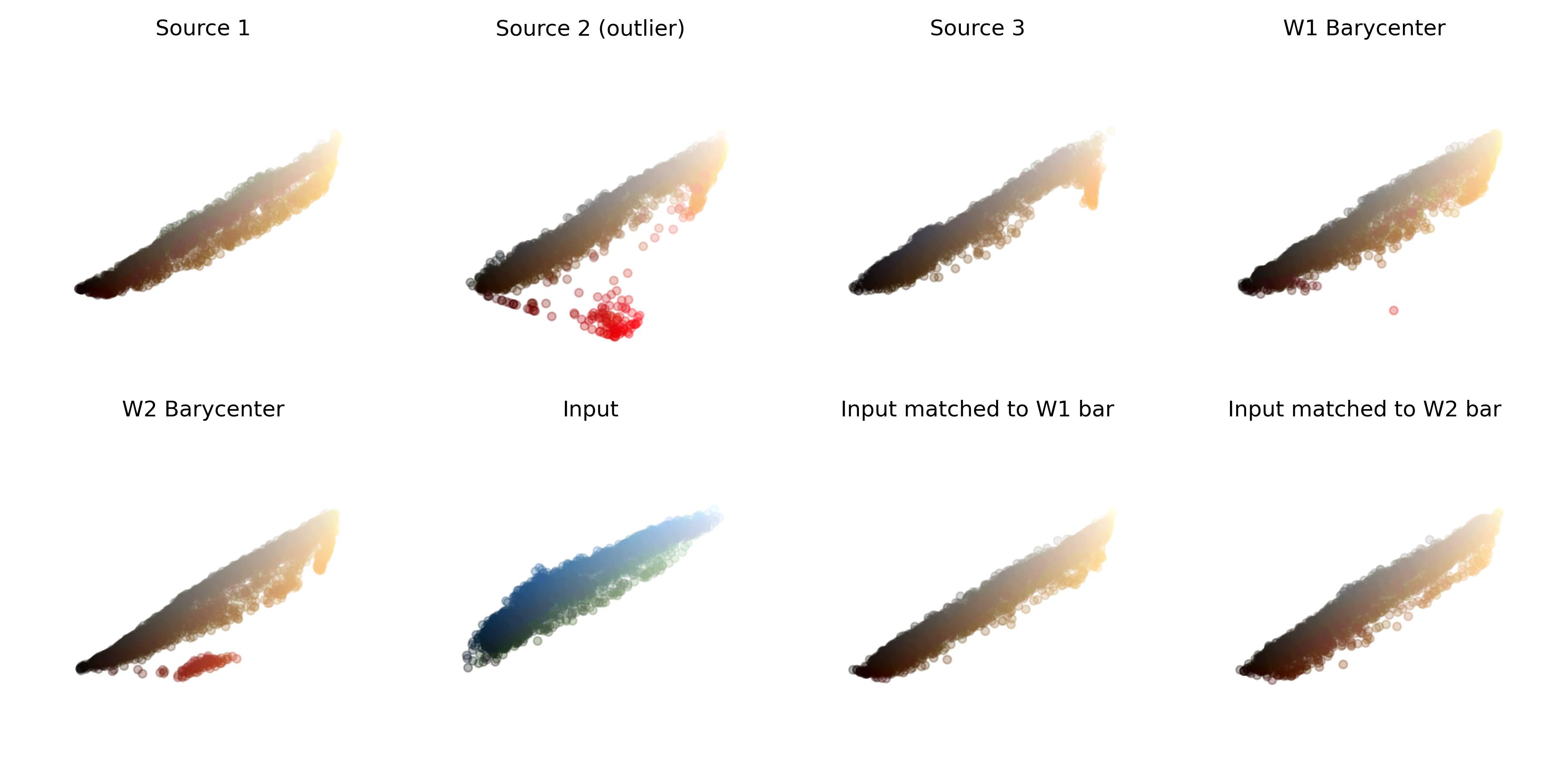}
    \caption{Colour distributions of the different images from
    \cref{fig:bar_W1_W2_matching} as well as the $\W_2^2$ and $\W_1$
    barycentres of the source images.}
    \label{fig:bar_W1_W2_matching_rgb}
\end{figure}

\FloatBarrier
\newpage
\section*{Future Directions}

There are numerous directions for future research. To begin with, in
\cref{thm:fixed_point_iterates_cv}, we show subsequential convergence to
fixed-points of $G$, which may not be barycentres. In cases where barycentres
and fixed points may not be unique such as the discrete setting, it remains
unclear if there exists fixed points that are not barycentres.

The barycentric fixed-point algorithm (iterating \cref{eqn:def_H}) has no
theoretical guarantees of convergence. Given its computational advantages and
its current use in practice for the squared Euclidean cost
(\cite{cuturi14fast,flamary2021pot}), this is a timely question.

In \cref{sec:bar_proj}, we required a notion of barycentric projection for
couplings $\pi \in \Pi_{c_k}^*(\mu, \nu_k)$. In $\R^d$, the underlying convex
combinations are performed using the usual linear structure, however this does
not generalise to arbitrary metric spaces. To consider these object more
formally on generic (compact) metric spaces, it would be necessary to discuss in
more detail the meaning of expectation in a space without a linear structure. 

Throughout this work, we relied heavily on \cref{ass:B}, but in practice this
can be difficult to verify for costs $c_k$: beyond the case $c_k = h(x-y)$ with
$h$ strictly convex, it is difficult to provide large classes of costs that
yield this property on $B$ (other examples include $c_k(x,y) = \|P_k x -
y\|_2^2$ as in \cite{delon2021generalized} or $\W_2^2$ for absolutely continuous
measures). One could alternatively investigate a theoretical framework where $B$
is a multi-function.

In the absolutely continuous case, the Twist condition can ensure uniqueness of
the barycentre, as explained in \cref{rem:uniqueness_matching_teams}. A natural
question concerns almost-sure uniqueness in the discrete case, as was partially
explored in \cref{sec:uniqueness_discrete}.

From a numerical standpoint, it has been observed that the fixed-point algorithm
converges in very few iterations. A theoretical work extending the discrete
Wasserstein case from \cite{lindheim2023simple} would bridge a significant gap
between theory and practical observation.

\subsection*{Acknowledgements}

We would like to thank Christophe Gaillac for the initial discussions that
motivated the introduction of barycentres with generic costs. We thank Nicolas
Juillet for useful discussions about the fixed points of the functionals $G$ and
$H$. This research was funded in part by the Agence nationale de la recherche
(ANR), Grant ANR-23-CE40-0017 and by the France 2030 program, with the reference
ANR-23-PEIA-0004. 

{\small
\printbibliography
}

\end{document}